\documentclass{article}

\usepackage{PRIMEarxiv}

\usepackage[utf8]{inputenc} 
\usepackage[T1]{fontenc}    
\usepackage{hyperref}       
\usepackage{url}            
\usepackage{booktabs}       
\usepackage{amsfonts}       
\usepackage{nicefrac}       
\usepackage{microtype}      
\usepackage{graphicx}
\graphicspath{{media/}}     
\usepackage{libertine}
\usepackage{libertinust1math}
\usepackage{dsfont}  
\usepackage{nicefrac}
\usepackage{todonotes}
\usepackage{algorithm}
\usepackage{algorithmicx}

\usepackage[T1]{fontenc}
\usepackage{booktabs}
\usepackage{subcaption}
\usepackage{amsmath}
\usepackage{amssymb}
\usepackage{mathtools}
\usepackage{amsthm}
\usepackage{ifthen}
\usepackage{graphicx}
\usepackage{subcaption}
\usepackage{algorithm}%
\usepackage{algorithmicx}%
\usepackage{algpseudocode}%
\usepackage{listings}%

\theoremstyle{plain}
\newtheorem{theorem}{Theorem}
\newtheorem{lemma}[theorem]{Lemma}
\newtheorem{corollary}[theorem]{Corollary}
\newtheorem{proposition}[theorem]{Proposition}

\newtheorem{definition}[theorem]{Definition}

\newcommand{\fa}[2]{\forall #1 \ifthenelse{\equal{#2}{>0}}{>0}{\in #2} \;}
\newcommand{\ex}[2]{\exists #1 \ifthenelse{\equal{#2}{>0}}{>0}{\in #2} \;}
\renewcommand{\P}{{\mathbb{P}}}

\newcommand{\minimize}{\mathop{\mathrm{minimize}}}

\newcommand{\C}{{\bf (C)}}
\newcommand{\E}{{\mathbb{E}}}
\newcommand{\R}{{\mathbb{R}}}
\newcommand{\N}{{\mathbb{N}}}

\newcommand{\X}{{\mathcal{X}}}
\newcommand{\F}{{\mathcal{F}}}
\newcommand{\B}{{\mathcal{B}}}
\newcommand{\Y}{{\mathcal{Y}}}
\newcommand{\sgn}{\mathrm{sgn}}
\newcommand{\G}{{\mathcal{G}}}
\newcommand{\D}{{\mathcal{D}}}

\newcommand{\cE}{{\mathcal{E}}}
\newcommand{\cC}{{\mathcal{C}}}

\newcommand{\eps}{\varepsilon}
\newcommand{\abs}[1]{{\left|#1\right|}}
\newcommand{\norm}[1]{{\left\|#1\right\|}}
\newcommand{\set}[3]{{\left\{ #1 \in #2\ \middle|\ #3 \right\}}}
\newcommand{\ie}[3]{{\ifthenelse{\equal{#1}{}}{#2}{#3}}}
\newcommand{\der}[3]{{
\ifx&#2&#3&
    \nabla #1%
\else
    \ifx&#3&
        \nabla #1 \left( #2 \right)%
    \else 
        \left\langle \nabla #1 \left( #2 \right), #3 \right\rangle
    \fi
\fi
}}

\newcommand{\cder}[3]{{
    \partial_C#1\ie{#2#3}{}{\left(
            \ie{#2}{\cdot\;}{#2};
            \ie{#3}{\;\cdot}{#3}
    \right)}
}}
\newcommand{\cdder}[3]{{
     {#1}^{\circ}
     \ie{#2#3}{}{(
        \ie{#2}{\cdot\;}{#2};
        \ie{#3}{\;\cdot}{#3}
     )}
}}

\newcommand{\Xopt}{\bar{{\mathcal X}}}
\newcommand{\xo}{\bar{x}}
\newcommand{\Mirror}{\mathcal M}
\newcommand{\braket}[2]{{\left\langle#1, #2\right\rangle}}
\newcommand{\mul}[2]{{\left\langle#1, #2\right\rangle}}
\newcommand{\Up}{{\bf (U_p)}}
\newcommand{\V}{{\mathcal V}}

\title{Exact Penalty Method for Variationally Coherent  Stochastic Programming Problems
}

\author{
  Bogdan K. Jastrzębski \\
  Faculty of Mathematics and Information Science \\
  Warsaw University of Technology \\
  Warsaw \\
  \texttt{bogdan.jastrzebski.dokt@pw.edu.pl} \\
  \AND
  Radosław Pytlak \\
  Faculty of Mathematics and Information Science \\
  Warsaw University of Technology \\
  Warsaw \\
  \texttt{radoslaw.pytlak@pw.edu.pl} \\
}

\begin{document}
\maketitle

\begin{abstract}
The paper concerns optimization problems with general equality and inequality constraints and with constraints expressed by a convex set. In order to solve these problems, the general constraints are treated by an exact penalty functions while the others by mirror descent approach. The paper introduces a constraint qualification condition under which the solution of the optimization problem with an exact penalty function and constraints defined by the convex set is a solution of the original problem with constraints. The paper extends results on exact penalty functions to the case when together with general equality and inequality constraints additional constraints defined by a convex set are present. In order to solve the optimization problems with exact penalty functions, a mirror descent algorithm is proposed. It is assumed that instead of using gradients of functions defining constrained optimization problems, their stochastic approximations can be applied. The paper establishes global convergence of the proposed method under the assumption that applied exact penalty functions lead to variationally coherent optimization problems. Since exact penalty functions are not differentiable, the concept of variationally coherent problems is extended to the problems defined by functions exhibiting Clarke's generalized gradients. The behavior of the proposed method is illustrated by some numerical examples.  


\end{abstract}

\keywords{non-convex optimization,  \and exact penalty function, \and constrained optimization, \and stochastic mirror descent method}

\section{Introduction}


In various applications, especially in large scale optimization problems in statistics, first-order methods with stochastic oracle rank among the most popular. Very often, these problems include various equality and inequality constraints, are non-differentiable and non-convex. In this work, we study the convergence of the stochastic mirror descent (SMD) algorithm with exact penalization (EP) for solving variationally coherent problems admitting generalized derivative in Clarke's sense. We further study the algorithm numerically in solving online-learning (OL) problems with constraints.

Non-convex optimization problems with stochastic first-order oracles appear frequently in various areas of mathematics. In statistics, calculating estimators on very large datasets often involves online-learning (OL), where only a random fraction of the dataset is used at each step of the optimization procedure. As the subset is random, the algorithm has access only to stochastic gradient.

When solving optimization problem with constraints, one can use the exact penalty approach (\cite{DiPillo}, \cite{Han}, \cite{Piet}). The constrained problem is translated to a problem without constraints, but with an additional penalty term, such that the solution to the new problem is in the feasible set. Often the exact penalty makes the objective function non-differentiable, but differentiable in some broader sense, e.g., the function can admit a subgradient or Clarke's generalized derivative.

We introduce a generalized notion of variational coherence and extend the proof presented in \cite{boyd} to non-differentiable settings, but differentiable in the Clarke's sense. We study the algorithm numerically, applying the methodology to solve stochastic problems with equality constraints via exact penalty method. The analysis of the convergence of the algorithm is motivated by the need to assert convergence, improve the algorithm's speed and better understand the factors upon which convergence depends.

The paper provides the theoretical background for using SMD algorithm for solving optimization problems which not only exhibit simple constraints on decision variables $x\in {\cal X}$ but also general equality and inequality constraints expressed by some functions of decision variables. The SMD algorithm is used to solve optimization problems which are results of applying an exact penalization procedure to these problems. The effects of the exact penalization are optimization problems with simple constraints, which are described by non--differentiable functions. It is shown show that the SMD algorithm presented in \cite{boyd} can be successfully used to solve these problems provided that  objective functions of these functions admit generalized directional derivatives in the Clarke's sense and the problems are variationally coherent according to the definition stated in the paper. In particular, if the functions defining the original problem with constraints are continuously differentiable, then the first requirement is satisfied. 

In order to justify the approach presented in the paper it is necessary to extend the results on exact penalization presented so far in the literature (for example in \cite{DiPillo},\cite{Han},\cite{Piet}) since optimization problems we consider include not only general equality and inequality constraints (as in \cite{DiPillo},\cite{Han},\cite{Piet}) but also constraints of the type $x\in {\cal X}$. The extension requires using different constraint qualification to the one used so far in the context of exact penalization applied to mathematical programming problems.


We consider functions $f$ defined on $n$--dimensional vectors space $\V$ with norm $\|\cdot\|$. $\nabla f(x)$ means the gradient of $f$ with respect to $x$ which is treated as an element of the dual space $\Y=\V^\star$ with the norm $\|y\|_\star = \sup_{x\in \V} \left \{ \left \langle y,x\right \rangle |\ \|x\|\leq 1\right \}$, $\der{f}{x}{v}$ is its dual paring with vector $v \in \Y$, Furthermore, by $Df(x;d)$ we denote the directional derivative of $f$ at $x$ in the direction $d$,
likewise $\cdder{f}{x}{v}$ Clarke's generalized directional derivative  and generalized gradient $\cder{f}{x}{v}$. Functions $\R \to \R$ applied to $\R^n$ are elementwise, e.g. $\abs{x} = [\abs{x_0}, \dots, \abs{x_{n-1}}]^T$ and also $x \leq 0 \iff x_i \leq 0,\ i=1,\ldots,n$.






\section{Problem Setup}








Consider the nonlinear programming problem:
\begin{align}
\minimize \quad & f(x)    \label{consP1}  \\
\text{over all} \quad & x \in \X \nonumber \\ 
\text{subject to} \quad & h_i(x) = 0, \quad i \in E     \label{consP21} \\
& g_j(x) \leq 0, \quad j \in I.  \label{consP3}
\end{align}
Here, $\X$ is a convex compact subset of the space $\V$. We will call (\ref{consP1})--(\ref{consP3}) the optimization problem $\C$. For the simplicity of the presentation we assume that $E=\{1,2,\ldots,n_E\}$ and $I= \{n_E+1,\ldots,n_E+n_I\}$.

We assume that $f(x) = \E[F(x;\omega)]$ for some stochastic function $F:\X\times \Omega\rightarrow \R$ defined on underlying probability space $(\Omega,\F,\P)$. With respect to $F$ we impose the following assumptions:
\begin{description}
    \item[{\bf (A1)}] $F(x;\omega)$ is continuously differentiable in $x$ for almost all $\omega \in \Omega$.
    \item[{\bf (A2)}] There exists a finite $M$ such that $\E[\|\der{F}{x;\omega}{}\|^2_{\star}]\leq M^2$ for all $x\in \X$.
    \item[{\bf (A3)}] Functions $h_i$, $i\in E$, $g_j$, $j\in I$ are continuously differentiable.
\end{description}






We aim to solve the optimization problem $\C$ by applying an exact penalty function to the constraints. Instead of solving the problem $\C$, we would like to solve the problem
\begin{equation}
    \minimize \ %
        P_p(x) := f(x) + p \| g(x)_{+}, h(x)\|
    \quad \text{over all} \ x \in \X \label{unconsP1} 
\end{equation}
where $p$ is a nonnegative real number, $(g(x)_{+})_j = \max(0,g_j(x))$ and $\|\cdot \|$ is any norm in $\R^{n_E+n_I}$. Here, the penalty term $\left \| g(x)_{+},h(x) \right \|$ is non--differentiable, but differentiable in the Clarke's sense. Throughout the paper we will use the notation $M_{\infty}(x) = \left \| g(x)_{+},h(x) \right \|_{\infty}$. The problem (\ref{unconsP1}) is called $\Up$.

\subsection{Constraint Qualification}

The exact penalty method constructs a penalty function such that, for a finite penalty parameter $p$, the minimizer of the unconstrained problem $\Up$ coincides with the minimizer of the original constrained problem $\C$. This contrasts with classical penalty approaches, which require the penalty parameter to approach infinity to guarantee feasibility. However, this equivalence holds only under specific regularity assumptions, known as constraint qualifications (CQ). Without these conditions, the penalty function may introduce artificial local minima outside the feasible region, or the required parameter $p$ may diverge to infinity.

To guarantee that solving the problem $\Up$ strictly solves the problem $\C$, we must ensure the constraint landscape is well-behaved. Let us define the set of admissible directions from a specific point $x \in \X$:
\begin{equation}
    \D(x) \coloneq \set{d}{\V}{x + d \in \X} 
\end{equation} 
and the set of strict descent directions, representing the admissible directions in which all inequality constraint functions locally strictly decrease:
\begin{equation}
    \D_<(x) \coloneq \set{d}{\D(x)}{(\forall j \in I) \der{g_j}{x}{d} < 0}.
\end{equation}

With the notation clarified, the constraint qualification condition establishes the required geometric regularity and takes the following form:

\begin{definition}[Constraint Qualification -- {\bf (CQ)}]\label{def:cq} 
The triple $(\X, g, h)$ satisfies the constraint qualification condition {\bf (CQ)} iff for all $x \in \X$:
\begin{equation}
   \D_<(x) \neq \emptyset 
\end{equation}
and, if equality constraints are present ($E \neq \emptyset$), it also holds that:
\begin{equation}
    0 \in \rm{interior}\left [ \cE(x)\right ]
    \label{CQa}
\end{equation}
where
\begin{equation*}
    \cE(x) = \set{\der{h}{x}{d}}{\R^{n_E}}{d \in \D_<(x)}.
\end{equation*}
\end{definition}

The constraint qualification {\bf (CQ)} is structurally similar to those stated in \cite{Han} (Definition 2.1), \cite{DiPillo} (Assumption B), and in \cite{py99}. The following lemma (equivalent to Theorem 2.2 in \cite{Han}) establishes that under {\bf (CQ)}, the local minima of the original problem $\C$ and the unconstrained penalty problem $\Up$ coincide for a sufficiently large penalty parameter $p$.

Constraint qualification allows to uniformly bound the directional derivatives of the constraint functions. If we are at a point $x$ that is not perfectly feasible, {\bf (CQ)} guarantees the existence of a target point $v \in \X$ such that moving from $x$ towards $v$ strictly decreases the constraint violations.

In the following analysis, we will frequently refer to the directional derivatives of the individual constraint components. Let us define the directional operators for the equality and inequality constraints along a direction $d$:
\begin{align}
    \xi_i(x,d) &= \begin{cases}
        \langle \nabla h_i(x),d \rangle & \text{if } h_i(x) > 0 \\
        | \langle \nabla h_i(x),d \rangle | & \text{if } h_i(x) = 0 \\
        -\langle \nabla h_i(x),d \rangle & \text{if } h_i(x) < 0 
    \end{cases} \label{DirDer1} \\
    \zeta_j(x,d) &= \begin{cases}
        \langle \nabla g_j(x),d \rangle & \text{if } g_j(x) > 0 \\
        \max(0, \langle \nabla g_j(x),d \rangle) & \text{if } g_j(x) = 0 \\
        0 & \text{if } g_j(x) < 0 
    \end{cases} \label{DirDer2}
\end{align}

With these operators defined, we can state the local bounding property guaranteed by the constraint qualification.

\begin{lemma}\label{l2}
Assume ${\bf (CQ)}$. For any $\tilde{x}\in\X$, $\varepsilon > 0$,
there exists a neighborhood $\B(\tilde{x}, \varepsilon)$ of $\tilde{x}$, $K_1, K_2 > 0$
with the following properties: given any 
$x\in \X$ such that $x\in\B(\tilde{x},\varepsilon)$ there exist $v,\hat{v}\in \X$ such that
\begin{align}
& \sgn(h_i(x)) \left \langle \nabla h_i(x), v - x\right \rangle = -K_1 \frac{|h_i(x)|}{M_\infty(x)}, \quad \forall i\in E, \label{CQ1} \\
& \left \langle \nabla g_j(x), v - x \right \rangle \leq -K_2,\ \forall j\in I. \label{CQ2}
\end{align}
\end{lemma}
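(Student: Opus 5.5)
The plan is to build $v$ once at $\tilde x$ from {\bf (CQ)}, and then to carry it to nearby points $x$ by continuity of $\nabla h$ and $\nabla g$ (assumption {\bf (A3)}) and by convexity of $\X$. We interpret $\frac{|h_i(x)|}{M_\infty(x)}$ as $0$ when $M_\infty(x)=0$. Write $y(x)\in\R^{n_E}$ for the vector with components $y_i(x)=-\sgn(h_i(x))\,|h_i(x)|/M_\infty(x)=-h_i(x)/M_\infty(x)$. Since $|h_i(x)|\le M_\infty(x)$, this vector always lies in the unit cube $[-1,1]^{n_E}$. Condition (\ref{CQ1}) is implied by the stronger linear requirement $\langle \nabla h(x), v-x\rangle = K_1 y(x)$. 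So the task is: for every $x$ near $\tilde x$ and every target $y$ in the cube, find $v\in\X$ with $\langle\nabla h(x),v-x\rangle=K_1y$ and $\langle\nabla g_j(x),v-x\rangle\le -K_2$. The auxiliary point $\hat v$ plays no role in the displayed conditions and can be taken equal to $v$.

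\emph{Step 1 (at $\tilde x$).} By (\ref{CQa}), $0$ is interior to $\cE(\tilde x)$, so for some $r>0$ the $2n_E$ vectors $\pm r e_k$ lie in $\cE(\tilde x)$. They are realized by directions $d_k^\pm\in\D_<(\tilde x)$ with $\langle\nabla h(\tilde x),d_k^\pm\rangle=\pm r e_k$ and $\langle \nabla g_j(\tilde x),d_k^\pm\rangle<0$. Also fix $d_0\in\D_<(\tilde x)$. Convex combinations of these directions stay in $\D(\tilde x)$ by convexity of $\X$, and they keep all $g$-derivatives at most $-\delta$ for some $\delta>0$. Their $h$-images contain the cross-polytope of radius $r$, hence a cube of radius $r/n_E$.

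\emph{Step 2 (moving the base point).} For $x$ near $\tilde x$, directions from $\tilde x$ must be converted into directions from $x$. The natural choice is to keep the target points fixed: $w_k^\pm=\tilde x+d_k^\pm\in\X$, and use $w-x$ as the direction at $x$. Then $w-x=(w-\tilde x)+(\tilde x-x)$. By continuity of the gradients and since $\|x-\tilde x\|<\varepsilon$, the $h$- and $g$-derivatives of $w-x$ at $x$ differ from those at $\tilde x$ by $O(\varepsilon)+o(1)$. We shrink the neighborhood (the statement allows choosing it, so we take $\varepsilon$ small if needed) so that these perturbations are below $\delta/2$ and below a fraction of $r/n_E$. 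Then at each such $x$, the linear map $\lambda\mapsto \langle\nabla h(x),\sum\lambda_k^\pm (w_k^\pm-x)\rangle$ on the simplex still has image containing a cube of radius $r/(2n_E)$.

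The main obstacle is showing this image-containment survives perturbation. I would argue it with a degree or Brouwer-type stability of surjectivity onto a cube, or more simply by solving exactly. For the exact solution, choose $n_E$ directions whose $h$-images form a basis close to $\pm r e_k$; the resulting matrix is invertible uniformly near $\tilde x$. One then solves for coefficients and corrects their signs using the opposite directions. Mixing in a little of $d_0$ keeps the coefficients nonnegative and summing to at most one.

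\emph{Step 3.} Set $K_1=r/(2n_E)$. For given $x$, express $K_1y(x)$ as the $h$-image of a convex combination $v=\sum\lambda_k^\pm w_k^\pm$, which lies in $\X$ by convexity. Then (\ref{CQ1}) holds with equality, and since every $w_k^\pm-x$ has $g$-derivatives at most $-\delta/2$, (\ref{CQ2}) holds with $K_2=\delta/2$. When $E=\emptyset$, only the first part of {\bf (CQ)} is used, with $v=\tilde x+d_0$.
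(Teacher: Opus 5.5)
Your proposal is correct and is essentially the paper's proof. Both build directions in $\D_<(\tilde x)$ whose $h$-images surround $0$, freeze the target points $\tilde x+d_j\in\X$, use continuity of $\nabla h,\nabla g$ to pass to nearby $x$, and take $v$ as a convex combination of the targets with $\nabla h(x)\circ(v-x)=-K_1h(x)/M_\infty(x)$ exactly and $K_2=\delta/2$.

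The only real difference is the step you flag as the main obstacle. The paper handles it by perturbing the strictly positive barycentric coordinates of $0$ through the matrix $P(x)$ with columns $\nabla h(x)\circ(v_j-v_0)$, which is invertible near $\tilde x$, so the weights stay in the simplex automatically. This is cleaner than your sign-correction sketch: putting extra weight on $d_0$ shifts the $h$-image by $\nabla h(x)\circ(\tilde x+d_0-x)$, which need not be small, so "mixing in a little of $d_0$" does not by itself keep the coefficients nonnegative. A short separation argument also works: if each vertex moves by at most $\eta$ in the sup norm, the convex hull still contains the cube of radius $r/n_E-\eta$.
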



\begin{proof} 
The proof given in the Appendix \ref{appendix:proofl2} follows the lines of Lemma 3.6.1 in \cite{py99}.
\end{proof}

The directional derivative of the full penalty function, $DP^\beta_p(x;d)$, decomposes into the objective gradient and a scaled penalty directional term: 
\begin{equation}
    DP^\beta_p(x;d) = \langle \nabla f(x),d \rangle + p \Delta^\beta(x,d).
\end{equation}
Depending on the chosen norm, for an infeasible point $x$, the penalty derivative component $\Delta^\beta(x,d)$ takes the following forms:
\begin{eqnarray}
&{\displaystyle 
    \Delta^1(x,d) = \sum_{i\in E}\xi_i(x,d) + \sum_{j\in I} \zeta_j(x,d),} \label{dp1} \\
&{\displaystyle 
    \Delta^\beta(x,d) = \norm{g_{+}(x),h(x)}^{1-\beta}_\beta 
    \cdot \bigg[ \sum_{i\in E} | h_i(x) |^{\beta -1}\xi_i(x,d) + \sum_{j\in I_+(x)} (g_j(x))^{\beta - 1} \zeta_j(x,d) \bigg], }\label{dpbeta} \\
&{\displaystyle 
    \Delta^\infty(x,d) = \max \Big[ \max_{i\in E(x)}\xi_i(x,d), \max_{j\in I(x)} \zeta_j(x,d) \Big],} \label{dpinf}
\end{eqnarray}
where the active maximum index sets are defined as $E(x) = \{i\in E \mid |h_i(x)| = M_\infty(x)\}$ and $I(x) = \{j\in I \mid (g_{j}(x))_{+} = M_\infty(x)\}$.

\begin{theorem}\label{t1p}
Suppose that the assumptions {\bf (A1)}--{\bf (A3)} are satisfied on a neighborhood of the point $\bar{x}$, which is a strict local minimum of the problem {\bf (C)} (meaning $\bar{x}$ is feasible with respect to all constraints, and $\bar{x}\in \X$). Assume further that at the point $\bar{x}$ the constraint qualification {\bf (CQ)} holds. Then for each norm $\|\cdot\|$ in $\V$, there exists a $\bar{p} > 0$ such that for all $p\geq \bar{p}$, the point $\bar{x}$ is a local minimum of $P_p(x)$ on the set $\X$. 
\end{theorem}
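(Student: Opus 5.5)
The proof goes by contradiction. Suppose that for every $p$ the point $\bar{x}$ fails to be a local minimum of $P_p$ on $\X$. Then we can choose $p_k \to \infty$ and points $x_k \in \X$ with $x_k \to \bar{x}$ and $P_{p_k}(x_k) < P_{p_k}(\bar{x}) = f(\bar{x})$. In fact we may take $x_k$ to be a minimizer of $P_{p_k}$ over $\X \cap \bar{\B}(\bar{x},\delta_k)$, with $\delta_k \downarrow 0$ small enough that $\bar{x}$ is the strict minimizer of $f$ over the feasible points of this ball. Since $\bar{x}$ is a strict local minimum of {\bf (C)}, no $x_k$ can be feasible. If it were, then $P_{p_k}(x_k) = f(x_k) \ge f(\bar{x})$, a contradiction. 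So $M_\infty(x_k) > 0$ for every $k$. All norms on $\R^{n_E+n_I}$ are equivalent, so it is enough to argue with a fixed $\beta$. I will state the estimates for the $\infty$-norm and transfer them through the formulas (\ref{dp1})--(\ref{dpinf}) up to constants.

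The key step is to use Lemma \ref{l2} at $\tilde{x} = \bar{x}$ to produce a uniform descent direction for the penalty term. For $k$ large, $x_k \in \B(\bar{x},\varepsilon)$, and the lemma gives $v_k \in \X$ satisfying (\ref{CQ1})--(\ref{CQ2}). Put $d_k = v_k - x_k$. Because $\X$ is compact, $\|d_k\|$ is bounded. From (\ref{CQ1}), every active equality index $i \in E(x_k)$ has $\xi_i(x_k,d_k) = -K_1$. From (\ref{CQ2}), every violated or active inequality has $\zeta_j(x_k,d_k) \le -K_2$; a $\zeta_j$ computed at $g_j = 0$ needs separate care, but when $M_\infty>0$ such $j$ are not in $I(x_k)$. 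Hence $\Delta^\infty(x_k,d_k) \le -\min(K_1,K_2) =: -K < 0$. For $\beta = 1$ and general $\beta$ the weighted sums in (\ref{dp1}) and (\ref{dpbeta}) behave the same way. Their nonactive terms satisfy $\sgn(h_i)\langle\nabla h_i, d_k\rangle = -K_1|h_i|/M_\infty \le 0$, so they are nonpositive, and at least one active term contributes $-K$ times a positive weight bounded away from zero. This yields $\Delta^\beta(x_k,d_k) \le -c_\beta K$ with $c_\beta>0$ independent of $k$.

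Next, choose $t_k \in (0,1]$ small, so that $x_k + t_k d_k \in \X$ by convexity and stays inside the ball. By (A1)--(A3), $f$ and the constraints are $C^1$ near $\bar{x}$; smoothness of $f$ follows from dominated convergence using (A2). Therefore
\[
DP_{p_k}(x_k;d_k) \le \|\nabla f(x_k)\|_\star \|d_k\| - p_k c_\beta K < 0
\]
for $p_k$ large, where the first term is bounded uniformly near $\bar{x}$. So $P_{p_k}$ strictly decreases from $x_k$ along $d_k$. This contradicts the minimality of $x_k$, provided $x_k$ lies in the interior of the ball relative to $\X$.

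I expect that proviso to be the main obstacle: $x_k$ might sit on the sphere $\|x-\bar{x}\| = \delta_k$, where moving along $d_k$ could leave the ball. I would handle this by not shrinking the radius. Fix $\delta$ once, so that the ball $\bar{\B}(\bar{x},\delta)$ lies inside the neighborhood of Lemma \ref{l2}, and let $x_k$ be a global minimizer of $P_{p_k}$ on the compact set $\X \cap \bar{\B}(\bar{x},\delta)$. A standard penalty argument then shows that every limit point of $(x_k)$ is feasible and minimizes $f$ over the feasible part of the ball, so it equals $\bar{x}$ by strictness. Hence $x_k \to \bar{x}$ and $x_k$ is eventually interior to the ball, and the directional-derivative contradiction above applies. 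The remaining care goes into checking that the one-sided derivative of the norm term really equals $\Delta^\beta$ at infeasible points, with possible ties in the max for $\beta=\infty$, and that the constants $K_1$, $K_2$ and $c_\beta$ are uniform.
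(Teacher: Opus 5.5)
Your proposal is correct and follows the paper's route. Lemma~\ref{l2} at $\bar x$ gives a direction $v-x$ along which the penalty term decreases at a uniform rate, so $DP_p(x;v-x)<0$ at infeasible points near $\bar x$ once $p$ is large. You also supply two things the paper's fragmentary argument leaves out: the contradiction scaffolding (minimizers of $P_{p_k}$ on a fixed ball, forced to $\bar x$ by strictness), and a reduction of every norm to $\|\cdot\|_\infty$ via $P_p\ge f+cp\,M_\infty$, which covers the inequality constraints that the paper's $\beta$-norm estimate drops.

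That reduction makes your side remark on $\beta=1$ unnecessary, which is just as well. For $h_i(x_k)=0$ the term $\xi_i=|\langle\nabla h_i(x_k),d_k\rangle|$ in (\ref{dp1}) is not controlled by (\ref{CQ1}) as stated. It vanishes only through the stronger identity $\nabla h(x)\circ(v-x)=-K_1h(x)/M_\infty(x)$ obtained in the lemma's proof.
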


First, the objective gradient is bounded:
\begin{equation}
    \langle \nabla f(x(p)), v - x(p) \rangle \le C K_3
\end{equation}
By the constraint qualification:
\begin{equation}
    {\rm sgn} (h_i(x(p))) \langle \nabla h_i(x(p)), v - x(p) \rangle \le -K_1 \frac{|h_i(x(p))|}{M_\infty(x(p))}
\end{equation}
Using the $\xi_i$ operator, this is equivalent to:
\begin{equation}
    \xi_i(x(p), v - x(p)) \le -K_1 \frac{|h_i(x(p))|}{M_\infty(x(p))}
\end{equation}
Summing over the equality constraints:
\begin{eqnarray}
&{\displaystyle 
     \sum_{i\in E} |h_i(x(p))|^{\beta-1} \xi_i(x(p), v-x(p))
  \le \sum_{i\in E} |h_i(x(p))|^{\beta-1} \left( -K_1 \frac{|h_i(x(p))|}{M_\infty(x(p))} \right) }\nonumber \\
&{\displaystyle 
 = - \frac{K_1}{M_\infty(x(p))} \sum_{i\in E} |h_i(x(p))|^\beta = - K_1 \frac{\|h(x(p))\|_\beta^\beta}{M_\infty(x(p))}}\nonumber
\end{eqnarray}
Plugging the sum back into the full directional derivative:
\begin{eqnarray}
&{\displaystyle 
    DP^\beta_p(x(p); v-x(p))
  \le \langle \nabla f(x(p)), v-x(p)\rangle  + p \| h(x(p)) \|^{1-\beta}_\beta \left( -K_1 \frac{\|h(x(p))\|_\beta^\beta}{M_\infty(x(p))} \right) }\nonumber \\
&{\displaystyle 
 = \langle \nabla f(x(p)), v-x(p)\rangle - p K_1 \frac{\|h(x(p))\|_\beta}{M_\infty(x(p))} \le C K_3 - p c_\beta K_1 < 0}\nonumber
\end{eqnarray}
The last inequality follows from norm equivalence, as in finite spaces there exists $c_\beta > 0$ such that:
\[
    c_\beta M_\infty(x(p)) = c_\beta \norm{h(x(p))}_\infty \leq \norm{h(x(p))}_\beta
\]
Thus, the final formula is strictly negative for sufficiently large $p$.

Using the analytical mechanics established in the proof of Theorem \ref{t1p}, we can now state the converse theorem, assuring that our algorithmic target aligns perfectly with the original problem (cf. Theorem 4.1 in \cite{DiPillo}).

\begin{theorem}\label{t2p}
Assume that the assumptions of Theorem \ref{t1p} are satisfied. Then, there exists a threshold $\bar{p} > 0$ such that for any $p \geq \bar{p}$, if $x(p)$ is a local minimum point of the unconstrained problem $\Up$, then $x(p)$ is also a local minimum point of the constrained problem $\C$. 
\end{theorem}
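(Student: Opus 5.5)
The plan is the classical exact-penalty argument: a local minimizer of $\Up$ is first shown to be feasible for $\C$, and its local optimality for $\C$ then follows from the ordering $P_p \ge f$ with equality on the feasible set. The only nontrivial step is feasibility.

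\emph{Reduction.} Suppose $x(p)\in\X$ is a local minimum of $P_p$ on $\X$ and is feasible, i.e.\ $h(x(p))=0$ and $g(x(p))\le 0$. For every feasible $x\in\X$ near $x(p)$ we have
\[
f(x)=P_p(x)\ge P_p(x(p))=f(x(p)),
\]
so $x(p)$ is a local minimum of $\C$. It therefore suffices to find $\bar p$ such that no infeasible point of $\X$ can be a local minimizer of $P_p$ once $p\ge\bar p$.

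\emph{Infeasible points are not local minima.} Let $x=x(p)$ be infeasible, so $M_\infty(x)>0$. Since $\X$ is convex and $v\in\X$, the direction $d=v-x$ is admissible and $x+t d\in\X$ for $t\in[0,1]$. At a local minimum of $P_p$ on $\X$ we must have $DP^\beta_p(x;d)\ge 0$ for every admissible $d$, so it is enough to produce $v\in\X$ with $DP^\beta_p(x;v-x)<0$.
\begin{itemize}
\item Apply Lemma~\ref{l2} at $x$, with $\tilde x$ ranging over the compact set $\X$ and a finite subcover extracted, so that $K_1,K_2>0$ are uniform over $\X$.
\item Bound the objective term by compactness: $\langle\nabla f(x),v-x\rangle\le CK_3$, where $C$ bounds $\|\nabla f\|_\star$ on $\X$ (finite by (A1)--(A2)) and $K_3$ bounds $\operatorname{diam}\X$.
\item For the equality constraints, (\ref{CQ1}) gives $\xi_i(x,v-x)\le -K_1|h_i(x)|/M_\infty(x)$, including the case $h_i(x)=0$, where both sides vanish.
\item For violated or active inequality constraints ($g_j(x)\ge 0$), (\ref{CQ2}) gives $\langle\nabla g_j(x),v-x\rangle\le -K_2<0$. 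Hence $\zeta_j=\langle\nabla g_j,d\rangle\le -K_2$ when $g_j(x)>0$, $\zeta_j=0$ when $g_j(x)=0$, and $\zeta_j=0$ when $g_j(x)<0$.
\item Substitute these into (\ref{dp1})--(\ref{dpinf}). Repeating the computation given after Theorem~\ref{t1p}, now with the inequality part included, gives $p\Delta^\beta(x,d)\le -p\,c\,\min(K_1,K_2 c')$ for a constant $c>0$ coming from norm equivalence and normalization by $M_\infty(x)$.
\end{itemize}
For $\Delta^\infty$ this is immediate: some index attains $M_\infty(x)$, and every active term is at most $-K_1$ (equalities, since $|h_i|=M_\infty$) or at most $-K_2$ (inequalities with $g_j=M_\infty>0$). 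For $\beta$-norms the weighted sum is at most $-\min(K_1,K_2/M_\infty)\,\|\cdot\|_\beta^\beta/M_\infty$. Since $M_\infty$ is bounded on the compact set $\X$, $K_2/M_\infty\ge K_2/\max_{\X}M_\infty$, and combining with $\|\cdot\|_\beta\ge c_\beta M_\infty$ gives a uniform negative constant. Consequently $DP_p(x;v-x)\le CK_3-p\kappa<0$ for all $p>\bar p:=CK_3/\kappa$.

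\emph{Main obstacle: uniformity.} Lemma~\ref{l2} is local around each $\tilde x$, and Theorem~\ref{t1p} only assumes {\bf (CQ)} at $\bar x$. I would therefore either use compactness of $\X$ together with {\bf (CQ)} on all of $\X$ (Definition~\ref{def:cq} is stated for all $x\in\X$) to make $K_1,K_2,\kappa$ uniform, or restrict attention to local minima $x(p)$ in a fixed neighborhood $\B(\bar x,\varepsilon)$ given by Lemma~\ref{l2}. The first route is cleaner. Continuity of $\nabla h,\nabla g$ (A3) is what keeps the constants uniform across the finite subcover.
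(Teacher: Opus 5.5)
Your proposal is correct and follows the paper's own route: infeasible local minimizers of $\Up$ are excluded because the direction $v-x$ from Lemma~\ref{l2} produces a penalty decrease that dominates the bounded term $\langle\nabla f(x),v-x\rangle$ once $p\ge\bar p$, after which $P_p=f$ on the feasible set $\G$ transfers local optimality to $\C$. Your compactness argument with {\bf (CQ)} on all of $\X$ makes explicit the uniformity of $K_1,K_2$ that the paper's sketch tacitly uses, and this uniformity is genuinely needed: with {\bf (CQ)} only at $\bar x$ and $f\equiv 0$, an infeasible strict local minimum of $|h|$ far from $\bar x$ would be a local minimizer of $P_p$ for every $p$.

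Two small repairs are needed. First, in the $\beta$-norm case use $g_j^{\beta-1}\ge g_j^{\beta}/M_\infty(x)$ for $j\in I_+(x)$ to get the constant $\min(K_1,K_2)$ directly. Your $\min(K_1,K_2/M_\infty)$ bound can fail when $M_\infty(x)<1$, although the conclusion survives. Second, in the $\ell_1$ case the claim $\xi_i(x,v-x)=0$ when $h_i(x)=0$ does not follow from the $\sgn$-form of (\ref{CQ1}). It follows instead from the componentwise identity $\langle\nabla h_i(x),v-x\rangle=-K_1h_i(x)/M_\infty(x)$, $i\in E$, which the construction in Appendix~\ref{appendix:proofl2} actually delivers.
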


\begin{proof}
Drawing on the arguments applied in Theorem \ref{t1p} (and mirroring Proposition 3.3 in \cite{DiPillo}), we first establish that $x(p)$ must be feasible. By Lemma \ref{l2}, for any infeasible point, there exists a direction that strictly decreases the penalty term. For $p \geq \bar{p}$, this penalty descent dominates the bounded objective gradient, meaning no infeasible point can be a local minimum. Thus, if $x(p)$ is a local solution to $\Up$, it must be strictly feasible with respect to the constraints of problem $\C$.

Let $\G = \{x \in \X \mid h_i(x) = 0, \forall i \in E; \ g_j(x) \leq 0, \forall j \in I\}$ denote the feasible set of the original problem. Since $x(p)$ is a local minimum of $\Up$, there exists a neighborhood $\B(x(p), \varepsilon)$ with $\varepsilon > 0$ such that:
\begin{equation}
    P_p(x(p)) \leq P_p(x), \quad \forall x \in \B(x(p), \varepsilon) \cap \X.
\end{equation}
Because $x(p)$ is feasible, $x(p) \in \G$, which means the penalty term at $x(p)$ evaluates to zero, giving $f(x(p)) = P_p(x(p))$. Furthermore, for any point $x$ that is also within the feasible set $\G$, $P_p(x) = f(x)$. Therefore, restricting the neighborhood to the feasible set yields:
\begin{equation}
    f(x(p)) = P_p(x(p)) \leq P_p(x) = f(x), \quad \forall x \in \B(x(p), \varepsilon) \cap \X \cap \G. \label{ep4a}
\end{equation}
Equation \eqref{ep4a} is the exact mathematical definition of a local minimum for problem $\C$, completing the proof.
\end{proof} 

The equivalence theorems above provide the foundational guarantee that by targeting the local minimizers of $\Up$, we are fundamentally solving $\C$. Consequently, we must design an algorithm capable of finding these minimizers. However, necessary optimality conditions typically require gradient evaluations. Because the exact penalty function $P_p(x)$ is inherently non-differentiable (due to the presence of the norm in its formulation), standard smooth optimization tools are insufficient. To formally evaluate descent directions and establish convergence on the set $\X$, we require a robust framework for non-smooth calculus. In the following section, we introduce the apparatus of Clarke's generalized gradients \cite{clarke2013functional} to bridge this theoretical gap.

Using parts of the proof of {\it Theorem \ref{t1p}} we are able to prove the following theorem (cf. Theorem 4.1 in \cite{DiPillo}).
\begin{theorem}\label{t3p}
Assume that the assumptions of Theorem \ref{t1p} are satisfied. Then there exists $\bar{p} > 0$ such that for $p\geq \bar{p}$ if $x(p)$ is a local minimum point of the problem ${\bf (U_p)}$ then $x(p)$ is also a local minimum point of the problem ${\bf (C)}$. 
\end{theorem}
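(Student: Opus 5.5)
The plan has two steps. First, show that for $p$ large every local minimizer $x(p)$ of $\Up$ is feasible for $\C$. Second, observe that on the feasible set $\G$ the functions $P_p$ and $f$ coincide, so local minimality transfers directly.

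For feasibility, suppose $x(p)$ is a local minimizer of $\Up$ and $M_\infty(x(p))>0$. Local optimality on the convex set $\X$ gives the first-order necessary condition
\[
DP_p(x(p);v-x(p))\ge 0 \quad\text{for every } v\in\X,
\]
since $P_p$ is directionally differentiable by {\bf (A3)} and the formulas (\ref{dp1})--(\ref{dpinf}). We apply Lemma \ref{l2} with $\tilde x=\bar x$, obtaining a neighbourhood $\B(\bar x,\varepsilon)$ and constants $K_1,K_2$. For $x(p)$ in this neighbourhood, the lemma yields $v\in\X$ satisfying (\ref{CQ1})--(\ref{CQ2}). The objective term is bounded by $\langle\nabla f(x(p)),v-x(p)\rangle\le CK_3$, using $\|\nabla f\|_\star\le M$ from {\bf (A2)} and $\|v-x\|\le \operatorname{diam}\X$.

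The penalty term is then treated norm by norm, as in the display following Theorem \ref{t1p}.
\begin{itemize}
\item For the $\beta$-norm, (\ref{CQ1}) gives $\xi_i\le -K_1|h_i|/M_\infty$. For active inequalities, (\ref{CQ2}) gives $\zeta_j\le -K_2$, and $\zeta_j=0$ for strictly satisfied ones. Hence $\Delta^\beta\le -\min(K_1,K_2)\,c_\beta$ after norm equivalence.
\item For $\beta=1$ and $\beta=\infty$ the same argument applies using (\ref{dp1}) and (\ref{dpinf}). For $\infty$ one takes the maximum over active indices, where $|h_i|=M_\infty$, so $\xi_i\le -K_1$.
\end{itemize}
Thus $DP_p(x(p);v-x(p))\le CK_3-p\,c\min(K_1,K_2)<0$ once $p>\bar p:=CK_3/(c\min(K_1,K_2))$, contradicting the necessary condition. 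Therefore $x(p)\in\G$.

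The main obstacle is that Lemma \ref{l2} is local around $\tilde x$, while $x(p)$ may lie anywhere in $\X$. I would first interpret the theorem as concerning local minimizers in the neighbourhood of $\bar x$ on which the assumptions hold, which matches the hypothesis that {\bf (A1)}--{\bf (A3)} hold near $\bar x$. Alternatively, since {\bf (CQ)} is stated for all $x\in\X$ and $\X$ is compact, one can cover $\X$ by finitely many neighbourhoods $\B(\tilde x_k,\varepsilon_k)$ from Lemma \ref{l2}. Taking the minimum of the constants $K_1^{(k)},K_2^{(k)}$ gives a uniform $\bar p$. I would present the compactness version, noting that the local version follows in the same way.

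For the transfer of optimality, $x(p)\in\G$ gives $P_p(x(p))=f(x(p))$. For feasible $x$ near $x(p)$ we have $P_p(x)=f(x)$, so
\[
f(x(p))\le f(x) \quad\text{on } \B(x(p),\varepsilon)\cap\G,
\]
which is exactly local optimality for $\C$, as in the proof of Theorem \ref{t2p}.
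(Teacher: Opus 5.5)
Your proposal is correct and takes the same approach as the paper. The paper also shows first that every local minimizer of $\Up$ is feasible for large $p$, using the Lemma~\ref{l2} descent-direction argument from Theorem~\ref{t1p}, which it only cites via Proposition~3.3 of \cite{DiPillo} while you actually carry it out; then it uses $P_p=f$ on $\G$. Two small points: your finite-cover argument for a uniform $\bar p$ uses {\bf (CQ)} at every point of $\X$ (Definition~\ref{def:cq}), not just at $\bar x$, which the paper also leaves implicit; and for $\beta=1$ you need $\langle\nabla h_i(x),v-x\rangle=0$ whenever $h_i(x)=0$, which the construction of $v$ in Appendix~\ref{appendix:proofl2} provides but the statement of Lemma~\ref{l2} does not.
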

\begin{proof}
Using arguments similar to those applied in the proof of {\it Theorem \ref{t1p}}, and also those which are presented in the proof of Proposition 3.3 stated in \cite{DiPillo}, we can show that there exists $\bar{p} > 0$ such that for any $p \geq \bar{p}$, if $x(p)$ is a local solution to the problem $\Up$ then $x(p)$ is feasible with respect to the constraints of the problem ${\bf (C)}$. It means that there exists $\B(x(p),\varepsilon)$ ($\varepsilon > 0$) such that
\begin{equation*}
    f(x(p)) = P_p(x(p)) \leq P_p(x),
    \quad
    \forall x\in \B(x(p),\ \varepsilon)\cap \X,\ p \geq \bar{p},
\end{equation*}
which implies that
\begin{equation}
    f(x(p)) = P_p(x(p)) \leq  P_p(x) = f(x),
    \quad
    \forall x\in \B (x(p), \varepsilon)\cap \X \cap \G,
    \label{ep4}
\end{equation}
$p\geq \bar{p}$, where $\G = \{x\in {\mathcal X} | \ h_i(x) = 0,\ i\in E,\ g_j(x) \leq 0,\ j\in I\}$. But (\ref{ep4}) states that $x(p)$ is a strict local minimum for the problem ${\bf (C)}$.


We assume that $\bar{x}$ is a local solution for the problem $\min_{x\in \X} P_{\bar{p}}(x)$ which means that there exists some neighbourhood $\B (\bar{x},\varepsilon)$ ($\varepsilon > 0$) such that 
\begin{equation*}
    f(\bar{x}) = P_{\bar{p}}(\bar{x}) \leq P_{\bar{p}}(x),
    \quad
    \forall x \in \B(\bar{x},\varepsilon) \cap \X
\end{equation*}
However, because $\bar{x}\in {\mathcal G}$, we also have 
\begin{equation*}
    f(\bar{x}) = P_{\bar{p}}(\bar{x}) \leq P_{\bar{p}}(x) = f(x),
    \quad
    \forall x\in \B (\bar{x},\varepsilon)\cap {\mathcal X}\cap {\mathcal G}.
\end{equation*}
which means that $\bar{x}$ is a local solution to the problem ${\bf (C)}$. 
\end{proof} 


The above theorems refer to local minimizers of problems $\C$ and $\Up$. As we will see in the next sections these results are needed to show that the proposed algorithm applied to the problems $\Up$ will find a minimizer of the problem $\C$.

\subsection{Clarke's Generalized Gradient}

The following definition of Clarke's generalized derivative will be used in the convergence analysis of the proposed algorithm.

\begin{definition}[Generalized Directional Derivative]
Let $\V$ be a vector space and $f: \V \to \R$ be a function. The generalized directional derivative $\cdder{f}{x}{v}$ of $f$ at $x$ in the direction $v$ is defined as:
\begin{equation}
\cdder{f}{x}{v} \coloneq \limsup_{
    \substack{
    y\rightarrow x \\
    t\downarrow 0
}
} \frac{f(y+tv)-f(y)}{t}
\end{equation}
\end{definition}

The basic properties of the Clarke's generalized directional derivative $\cdder{f}{x}{}$ are: it is finite, positively homogeneous: $\cdder{f}{x}{\lambda v} = \lambda \cdder{f}{x}{v}$, for $\lambda \in \R_+$, subadditive: $\cdder{f}{x}{v + w} \leq \cdder{f}{x}{v} + \cdder{f}{x}{w}$, upper semicontinuous: $ \limsup_{(u,z) \to (v,x)}\cdder{f}{z}{u} \leq \cdder{f}{x}{v}$, $K$--Lipschitz: $\|\cdder{f}{x}{v} - \cdder{f}{x}{u}\| \leq K\|v - u\|$ and $\cdder{f}{x}{-v} = \cdder{(-f)}{x}{v}$, for all $u,v,w,x,z$.
%
%

Note that due to positive homogeneity and subadditivity, the generalized directional derivative is in particular convex.

\begin{definition}[Clarke's Generalized Gradient] Let $\V$ be a vector space and $f: \V \to \R$ be a function. The generalized derivative $\cder{f}{}{} : \V \to \V^*$ in Clarke's sense is defined as (\cite{clarke2013functional}):
\begin{equation}
\begin{split}
    \partial_C f(x)
    &
    \coloneq \set{g}{\V^*}{\cdder{f}{x}{v} \geq \braket{g}{v}\ \fa{v}{\V}}.
\end{split}
\end{equation}
An element of the subdifferential $g\in \partial_Cf(x)$ is called the subgradient of $f$, and when it is needed, it will also be denoted as $g(x)$.
\end{definition}

\subsection{Variational coherence}
The aim of the paper is to show that an SMD algorithm can be used to solve problems $\Up$. In that way the method for solving constrained optimization problems by using exact penalty functions is proposed. This means that the constrained optimization problem $\C$ can be tackled by using penalty functions with much lower values of penalty parameters than in the case of standard penalty functions which, in order to guarantee feasible points, require these values approaching infinity (\cite{nocedal}). In this paper mechanisms (the updates of penalty parameters when solving the problem $\Up$), which should provide as small as possible values to guarantee feasiblity of local solutions to $\Up$, are not analyzed. Such schemes are considered, for example, in \cite{py99}, or \cite{toint}. 

The SMD scheme is a suitable candidate for solving problems $\Up$ since its variants comprise one of the
most widely used families of first--order methods in stochastic convex and non--convex optimization. In particular, a SMD variant proposed in \cite{boyd} exhibits globally convergent properties when applied to variationally coherent functions which represent broader class of functions than a class of quasi--convex functions.

Following \cite{boyd} the paper introduces a generalized definition of variational coherence for functions admitting Clarke's generalized derivative. Let $\Xopt = \arg\min_{x\in \X} f(x)$. Since $\X$ is compact and $f$ is continuous the set $\Xopt$ is closed and non--empty.

\begin{definition}[Variational Coherence]\label{definition-variational-coherence}
We say that the problem $\min_{x\in \X} f$ is variationally coherent if
for all $x \in \X$, and for all $\xo \in \Xopt$
\begin{align}
    & \cdder{f}{x}{\xo - x} \leq 0
    \label{varcoh-1-<=0}
    \\ & 
     \cdder{f}{x}{\xo-x} = 0  \implies x \in \Xopt. \label{varcoh-2-opt}
\end{align}
Notice that according to the definition of $\partial_Cf(x)$ the first condition (\ref{varcoh-1-<=0}) can be rephrased as $\braket{g}{\xo - x} \leq 0\ \forall g \in \partial_Cf(x)$.
\end{definition}

%

\subsection{The mirror descent algorithm}
The paper presents a first--order algorithm for solving optimization problems $\Up$, based on the SMD scheme proposed by Nemirovski and Yudin in \cite{nem} and studying further in various papers including  \cite{beck2002}, 
\cite{lan2012}, \cite{nesterov2009} and \cite{boyd}.

The SMD scheme is an iterative algorithm which at the $k$--th iteration makes the following update of a random variable $X_k$ which is an approximate to the optimization problem solution:
\begin{eqnarray}
    Y_{k+1} & = & Y_k - \gamma_k G_k \label{SMD_step1}\\
    X_{k+1} & = & \Mirror_h(Y_{k+1}). \label{SMD_step2}
\end{eqnarray}
Here, $G_k = g(X_k;\omega_k)$ is an i.i.d sample of the subgradient of $P_p$ evaluated in the dual space $\Y =\V^\star$ (from the distribution of the random variable $G$), and $\Mirror_h:\Y\rightarrow \X$ is a mirror map that translates the aggregated gradient from the dual space to the set of decision variables. $\gamma_k > 0$ are stepsizes which will be specified later.

The mirror map is defined in general by
\begin{eqnarray}
    &{\displaystyle \Mirror_h(y) = \arg\max_{x\in \X} \{ \langle y,x\rangle - h(x) \},} \label{GeneralMirror}
\end{eqnarray}
where $h:\X\rightarrow \R$ is the regularizer expressed by a continuous strongly convex function with the coefficient $K > 0$, i.e., the function satisfying for any $x,y\in \X$, $\lambda \in [0,1]$:
\begin{eqnarray}
    &{\displaystyle h(\lambda x + (1-\lambda)y) \leq \lambda h(x) + (1-\lambda)h(y) -\frac{1}{2}K \lambda (1-\lambda) \|x-y\|^2.} \label{MirrorReg}
\end{eqnarray}
In the convergence analysis which follows the crucial role is played by Fenchel coupling which is a measure of divergence in primal and dual spaces.


\begin{definition}[Fenchel Coupling]
Let $\V$ be a vector space and $h: \V \rightarrow \R$ be continuous and strongly convex.
The Fenchel coupling $F_h : \V \times \V^* \to \R$ is defined as:
\begin{equation}
F_h(x, y) \coloneq h(x) + h^*(y) - \braket{y}{x} \label{definition-fenchel-coupling},
\end{equation}
where $h^*:\V^*\to \R$ is the convex conjugate of $h$ defined as: $h^*(y) = \max_{x \in V} \{\braket{y}{x} - h(x)\}$.
\end{definition}
The basic properties of the Fenchel coupling, used in the convergence analysis, are 
\begin{eqnarray}
F_h(x, y_0) & \geq & \frac{K}{2} \|\Mirror_h(y_0) - x\|^2 \label{proposition-fc-properties-1}\label{Fenchel1}\\
F_h(x, y_1) & \leq & F_h(x, y_0)
+ \braket{y_1 - y_0}{\Mirror_h(y_0) - x}
+ \frac{1}{2K} \|y_0 - y_1\|^2_{*} \label{Fenchel2}
\end{eqnarray}
for all $x \in \V$ and $y_0,\,y_1 \in \V^*$.
In particular, the Fenchel coupling is non--negative. These properties are valid under the assumption that the Fenchel coupling is defined with the help of the $h$ regularizer which is $K$--strongly convex. The proof of these properties can be found in \cite{boyd}.

The inequality (\ref{proposition-fc-properties-1}) says that if $F_h(\xo, y_k)\rightarrow 0$ then $\Mirror_h(y_k)\rightarrow \xo$. However, in order to achieve convergence of SMD we need the additional assumptions:
\begin{description}
    \item[{\bf (A4)}] $\Mirror_h(y_k)\rightarrow \xo$ then $F_h(\xo, y_k)\rightarrow 0$,
    \item[{\bf (A5)}] the sequence of step sizes $\{\gamma_k\}$ satisfies the conditions:
    \begin{equation}
        \sum_{k=1}^{\infty}\gamma_{k}^{2}<\infty\ \ {\rm and}\ \  \sum_{k=1}^{\infty}\gamma_{k}=\infty. \label{assumption-4-RobbinsMonro}
    \end{equation}
\end{description}

\section{Convergence Analysis}
Our convergence analysis requires the notions of $\eps$--neighborhood of a set and the Fenchel $\delta$--zone of a set. Let assume that $\cC$ is a subset of $\X$ and $x\in X$ then the distance between $x$ and $\cC$ is defined as ${\rm dist}(\cC,x) = \inf_{y\in \cC} \|y-x\|$ and $\eps$--neighborhood of $\cC$ as $\B(\cC,\eps)= \{x\in \X \mid {\rm dist}(\cC,x)<\eps\}$. The Fenchel coupling between $y\in \Y$ and $\cC$ is defined by $F(\cC,y)=\inf_{x\in \cC}F(x,y)$ and the Fenchel $\delta$--zone of a set $\cC$ as $\B_F(\cC,\delta) = \{x\in \X \mid x=\Mirror_h(y)\ {\rm for\ some}\ y\in \Y\ {\rm with}\ F(\cC,y) <\delta\}$.

We now show an equivalent to Proposition 3.4 in \cite{boyd}, with the difference of using Clarke's generalized gradients instead of gradients. 


\begin{proposition}[Recurrence of $\eps$--neighborhoods and the Fenchel zones] 
\label{recurrence}
Suppose that $p$ is large enough so that local solutions to the problem $\Up$ are local solutions to the problem $\C$, and that $\Up$ is variationally coherent. If the assumptions ${\bf (A1)}$--${\bf (A4)}$ and the constraint qualification ${\bf (CQ)}$ hold, then  for any $\varepsilon > 0$ and $\delta > 0$ the sequence generated by the iterations (\ref{SMD_step1})--(\ref{SMD_step2}) enter $\B(\Xopt, \varepsilon)$ and $\B_F(\Xopt, \delta)$ infinitely many times, almost surely.
\end{proposition}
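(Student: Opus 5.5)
We argue by contradiction, following Proposition 3.4 of \cite{boyd}, with gradients replaced by Clarke subgradients of $P_p$. Fix $\xo\in\Xopt$ and set $D_k = F_h(\xo, Y_k)$. Suppose, on an event of positive probability, that $X_k \notin \B(\Xopt,\eps)$ for all $k\ge k_0$. We will show that this forces $D_k\to-\infty$, which is impossible because the Fenchel coupling is non--negative by (\ref{Fenchel1}).

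The first ingredient is a uniform drift bound. The set $\cC_\eps = \X\setminus\B(\Xopt,\eps)$ is compact. We claim there exists $a>0$ with
\begin{equation*}
P_p^{\circ}(x;\xo-x) \le -a \quad \text{for all } x\in\cC_\eps .
\end{equation*}
By (\ref{varcoh-1-<=0}) and (\ref{varcoh-2-opt}), the map $x\mapsto P_p^{\circ}(x;\xo-x)$ is strictly negative on $\cC_\eps$. It is also upper semicontinuous in $x$, since the Clarke derivative is jointly upper semicontinuous in $(x,v)$ and $v=\xo-x$ depends continuously on $x$. An upper semicontinuous function attains its maximum on a compact set, so the maximum is some $-a<0$. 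Consequently every $g\in\partial_C P_p(x)$ satisfies $\braket{g}{\xo-x}\le -a$.

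Next we expand the energy. Write $G_k = g_k + U_k$, where $g_k = \E[G_k\mid\F_k]\in\partial_C P_p(X_k)$ and $U_k$ is a martingale difference. Here we need the oracle to be unbiased for a Clarke subgradient. Under (A1)--(A3) this holds with $\nabla f(X_k)$ plus a fixed selection of the subgradient of the deterministic penalty. Because $\X$ is compact and (A3) holds, that penalty selection is bounded, so (A2) gives $\E[\|G_k\|_*^2\mid \F_k]\le M'^2$. Applying (\ref{Fenchel2}) with $y_0=Y_k$, $y_1=Y_{k+1}$ and $\Mirror_h(Y_k)=X_k$ gives
\begin{equation*}
D_{k+1}\le D_k - \gamma_k\braket{g_k}{X_k-\xo} - \gamma_k\braket{U_k}{X_k-\xo} + \frac{\gamma_k^2}{2K}\|G_k\|_*^2 .
\end{equation*}
Summing from $k_0$ to $n$ and using the drift bound on the event in question yields
\begin{equation*}
D_{n+1}\le D_{k_0} - a\sum_{k=k_0}^{n}\gamma_k + \sum_{k=k_0}^{n}\gamma_k\xi_k + \frac{1}{2K}\sum_{k=k_0}^{n}\gamma_k^2\|G_k\|_*^2 ,
\end{equation*}
where $\xi_k=-\braket{U_k}{X_k-\xo}$.

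The last two sums are controlled as follows.

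\begin{itemize}
\item The martingale $\sum\gamma_k\xi_k$ has summable conditional variances, since $\|X_k-\xo\|\le\mathrm{diam}\,\X$ and $\sum\gamma_k^2<\infty$ by (A5). By the martingale convergence theorem (or the strong law for martingales) it converges almost surely, so it is $o(\sum\gamma_k)$.
\item The quadratic term has finite expectation, hence it converges almost surely.
\end{itemize}

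Since $\sum\gamma_k=\infty$, the right-hand side tends to $-\infty$, contradicting $D_n\ge 0$. Hence $X_k$ enters $\B(\Xopt,\eps)$ infinitely often almost surely.

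For the Fenchel zones, given $\delta>0$, assumption (A4) (applied uniformly over the compact set $\Xopt$) provides an $\eps>0$ such that $\B(\Xopt,\eps)$ is contained in $\B_F(\Xopt,\delta)$ along the iterates. Recurrence in the $\eps$--neighborhood therefore transfers to the Fenchel zone. Alternatively, one can rerun the same contradiction argument with the event $F(\Xopt,Y_k)\ge\delta$ for all large $k$: by (\ref{Fenchel1}) this keeps $X_k$ at a positive distance from $\Xopt$, and the argument above applies.

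The main obstacle is the uniform drift bound, because Clarke derivatives are only upper semicontinuous and not continuous. The compactness argument above handles this, since the upper semicontinuity points in the right direction for bounding a maximum. A secondary issue is verifying that the stochastic oracle is conditionally unbiased for an element of $\partial_C P_p(X_k)$; this is where (A1)--(A3) are used. The hypotheses that $p$ is large and that (CQ) holds serve only to make $\Xopt$ coincide with solutions of $\C$; they are not needed for the recurrence itself.
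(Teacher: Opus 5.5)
Your proof is correct and follows the paper's route. You obtain a uniform negative drift bound from upper semicontinuity of the Clarke derivative plus compactness; this is the paper's Lemma~\ref{lemmaBound}, which you get more directly for a fixed $\xo$ by maximizing an upper semicontinuous function over $\X\setminus\B(\Xopt,\eps)$. You then run the Fenchel-coupling/martingale argument of \cite{boyd}, and handle the Fenchel zones with \textbf{(A4)}, upgraded to a uniform $\eps$--$\delta$ statement by a subsequence argument on the compact set $\Xopt$.

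Two small remarks:
\begin{enumerate}
\item You genuinely need \textbf{(A5)}. The proposition omits it, but the paper's appeal to \cite{boyd} also tacitly uses it.
\item Your ``alternative'' Fenchel-zone argument cites (\ref{Fenchel1}) in the wrong direction. That inequality only says a small coupling forces $X_k$ close to $\Xopt$. Concluding that $F(\Xopt,Y_k)\ge\delta$ keeps $X_k$ away from $\Xopt$ requires \textbf{(A4)}.
\end{enumerate}
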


The proof in \cite{boyd} relies on continuity of the gradient. While the generalized gradient is not continuous, it is upper semi--continuous, which we show is sufficient for the proposition to hold true. Let us then prove the following lemma:
\begin{lemma}[Existence of bound $c$]
\label{lemmaBound}
For all $\varepsilon>0$ there exists $c_\varepsilon > 0$, such that for all $x \in \X \setminus \B(\Xopt, \varepsilon)$ and for all $\xo \in \Xopt$
\begin{equation}
\begin{split}
\cdder{f}{x}{\xo - x} \leq -c_\varepsilon.
\end{split} \label{lemma-thesis}
\end{equation}
\end{lemma}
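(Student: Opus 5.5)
The plan is a compactness argument driven by the upper semicontinuity of $\cdder{f}{}{}$ jointly in point and direction. Set $K_\varepsilon := \X \setminus \B(\Xopt,\varepsilon)$. It is closed in the compact set $\X$, hence compact; if it is empty the claim holds trivially. $\Xopt$ is compact as well. Define
\begin{equation*}
\phi(x,\xo) := \cdder{f}{x}{\xo - x}, \qquad (x,\xo) \in K_\varepsilon \times \Xopt .
\end{equation*}
The first step is to show that $\phi$ is upper semicontinuous on $K_\varepsilon\times\Xopt$. The map $(x,\xo)\mapsto(\xo-x,x)$ is continuous, and $(v,x)\mapsto \cdder{f}{x}{v}$ is upper semicontinuous by the properties listed after the definition of the generalized directional derivative. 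Composing the two gives upper semicontinuity of $\phi$.

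The second step uses variational coherence. For $x\in K_\varepsilon$ we have $x\notin\Xopt$. By (\ref{varcoh-1-<=0}), $\phi(x,\xo)\le 0$, and by the contrapositive of (\ref{varcoh-2-opt}), $\phi(x,\xo)\neq 0$. Hence $\phi<0$ everywhere on $K_\varepsilon\times\Xopt$.

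The third step gives a uniform bound. An upper semicontinuous function on a nonempty compact set attains its supremum. So there is $(x^*,\xo^*)$ with
\begin{equation*}
\sup_{K_\varepsilon\times\Xopt}\phi = \phi(x^*,\xo^*) < 0 .
\end{equation*}
Taking $c_\varepsilon := -\phi(x^*,\xo^*)>0$ gives (\ref{lemma-thesis}). Alternatively, one can argue by contradiction: take a sequence $(x_k,\xo_k)$ with $\phi(x_k,\xo_k)\to 0$, extract a convergent subsequence with limit $(x^*,\xo^*)\in K_\varepsilon\times\Xopt$, and apply upper semicontinuity to get $\phi(x^*,\xo^*)\ge 0$. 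This contradicts the second step.

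The main obstacle is the first step. The upper semicontinuity quoted in the paper requires $f$ to be locally Lipschitz near $\X$; this is the standard setting for Clarke calculus. I would justify it from (A1)--(A3): $f$ is $C^1$, since $\E\|\nabla F\|_\star^2\le M^2$ allows differentiation under the expectation. The penalty term is a norm composed with the $C^1$ maps $g_+$ and $h$. Therefore $P_p$ is locally Lipschitz on a neighborhood of the compact set $\X$, and Clarke's joint upper semicontinuity applies there. A secondary point is that $\phi$ is evaluated only at points of $\X$. The limsup in the definition of $\cdder{f}{}{}$ involves nearby points $y$, possibly outside $\X$, so $f$ must be defined and Lipschitz on an open neighborhood of $\X$. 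I would state this assumption explicitly if it is not already implicit.
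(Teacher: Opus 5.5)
Your proposal is correct and takes essentially the paper's route. Both arguments combine compactness of $(\X\setminus\B(\Xopt,\varepsilon))\times\Xopt$, joint upper semicontinuity of $f^{\circ}$ composed with $(x,\xo)\mapsto(x,\xo-x)$, and the two coherence conditions, which force $f^{\circ}(x;\xo-x)<0$ off $\Xopt$. The paper writes this as the sequential contradiction you list as your alternative, so your attained-maximum version is only a cleaner packaging. Your caveat that upper semicontinuity needs $f$ (here $P_p$) to be locally Lipschitz on an open neighborhood of $\X$ is a legitimate point that the paper takes for granted.
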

\begin{proof}
Assume, contrarywise, there exists $\varepsilon > 0$, such that for all $c > 0$ exist $x_c \in \X \setminus \B(\Xopt, \varepsilon)$ and $\xo_c \in \Xopt$ which satisfy
\begin{equation}
\begin{split}
\cdder{f}{x_c}{\xo_c - x_c} > -c.
\end{split} \label{lemma-contra}
\end{equation}
Take any sequence $c : \N \to \R_+$, such that $\lim_{n \to \infty} c(n) = 0$ and define sequences:
\begin{equation}
    x(n) \coloneq x_{c(n)} \qquad \xo(n) \coloneq \xo_{c(n)}.
    \label{lemma-sequences}
\end{equation}
There exists then an increasing function $I : \N \to \N$, and elements $x_\infty \in \X\setminus \B(\Xopt, \varepsilon)$ and $\xo_\infty \in \Xopt$ such that subsequences $x_I \coloneq x \circ I$ and $\xo_I \coloneq \xo \circ I$ converge:
\begin{equation}
    \lim_{n \to \infty} (x_I(n), \xo_I(n)) = (x_\infty, \xo_\infty).
    \label{lemma-C}
\end{equation}
$\X\setminus \B(\Xopt, \varepsilon)$ is compact, as it is a difference of a compact set $\X$ and an open set $\B(\Xopt, \varepsilon)$, while $\Xopt$ is compact, as it is a closed subset of a compact set $\X$. The set $\X\setminus\B(\Xopt, \varepsilon) \times \Xopt$ is compact, being a product of compact sets. For these subsequences the following holds:
\begin{equation}
\lim_{n \to \infty}\cdder{f}{x_I(n)}{\xo_I(n) - x_I(n)} \geq 0 \label{lemma-lim-zero}
\end{equation}
as the sequence is bounded below by the sequence $-c_I \coloneq -c \circ I$, which by definition converges to zero:
\begin{equation}
    -c_I(n) < \cdder{f}{x_I(n)}{\xo_I(n) - x_I(n)},
    \quad \fa{n}{\N}
    \label{lemma-lower-bound}
\end{equation}
where the lower bound follows from assumption (\ref{lemma-contra}).
Eventually:
\begin{equation}
    \cdder{f}{x_\infty}{\xo_\infty - x_\infty} \geq 0 \label{lemma->=-zero}
\end{equation}
from:
\begin{align}
    0 \leq \lim_{n \to \infty}\cdder{f}{x_I(n)}{\xo_I(n) - x_I(n)} 
\leq \limsup_{
      (x, \xo) \to (x_\infty, \xo_\infty)
      }\cdder{f}{x}{\xo - x}\label{lemma-E-2} 
      \leq \cdder{f}{x_\infty}{\xo_\infty - x_\infty} 
\end{align}
where the first inequality follows from (\ref{lemma-lim-zero}), the second inequlity from limit properties, and most importantly the last inequality holds by the upper semi-continuity of $\cdder{f}{}{}$.

However, by assumption of variational coherence (\ref{varcoh-2-opt}), it also holds that:
\begin{equation}
    \cdder{f}{x_\infty}{\xo_\infty - x_\infty} < 0 \label{lemma-<-zero}
\end{equation}
given $x_\infty \notin \Xopt$. Statement (\ref{lemma-<-zero}) contradicts (\ref{lemma->=-zero}), which finally proves the proposition of the lemma.
\end{proof}

It is now possible to proceed with the proof of {\it Proposition \ref{recurrence}}, which is essentially the same as in \cite{boyd}. As in \cite{boyd} it consists of three steps. In the first step it is shown that the sequence $\{Y_k\}$ has martingal properties (by referring to the assumptions ${\bf (A1)}$--${\bf (A2)}$). In the second step the recurrence of $\eps$--neighborhoods is established with respect to $\B(\Xopt,\eps)$, while in the third step with respect to $\B_F(\Xopt,\delta)$ by referring to the assumption ${\bf (A4)}$. 

\begin{theorem}[Global convergence] 
\label{globalConvergence}
Suppose that the assumptions of {\rm Proposition \ref{recurrence}} hold together with ${\bf (A5)}$. Then $\{X_k\}$ converges with probability $1$ to a (possibly random) minimum point of $\C$.
\end{theorem}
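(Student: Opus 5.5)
The plan follows the structure of the global convergence proof in \cite{boyd} (Theorem 4.1 there), with gradients replaced by Clarke subgradients. It has three steps: stability of the Fenchel zones, passage to a limit point in $\Xopt$, and translation back to problem $\C$ via Theorem \ref{t2p}. Throughout, $f$ stands for $P_p$. For a fixed $\xo\in\Xopt$, set $D_k = F_h(\xo, Y_k)$.

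\textbf{Energy recursion.} By (\ref{Fenchel2}) with $y_0=Y_k$ and $y_1=Y_{k+1}=Y_k-\gamma_k G_k$,
\begin{equation*}
D_{k+1} \le D_k + \gamma_k \braket{G_k}{\xo - X_k} + \frac{\gamma_k^2}{2K}\|G_k\|_*^2 .
\end{equation*}
I would write $G_k = g(X_k) + U_k$, where $g(X_k)\in\partial_C P_p(X_k)$ is the conditional mean and $U_k$ is a martingale difference noise. By definition of $\partial_C$, $\braket{g(X_k)}{\xo-X_k}\le \cdder{P_p}{X_k}{\xo-X_k}\le 0$ by (\ref{varcoh-1-<=0}). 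Furthermore, by Lemma \ref{lemmaBound}, this is $\le -c_\eps$ whenever $X_k\notin\B(\Xopt,\eps)$. Two ingredients control the remaining terms. First, (A2) together with (A3) and compactness of $\X$ gives a uniform bound $\E\|G_k\|_*^2\le M'^2$; here the constraint gradients are continuous and hence bounded, so the penalty subgradients are bounded as well. Second, (A5) gives $\sum\gamma_k^2<\infty$. Together these make $\sum\gamma_k U_k$ an $L^2$-bounded martingale, hence a.s. convergent, and make $\sum \gamma_k^2\|G_k\|_*^2$ a.s. finite.

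\textbf{Step 1: stability of the Fenchel zones.} Fix $\delta>0$. By Proposition \ref{recurrence}, $X_k$ enters $\B_F(\Xopt,\delta/2)$ infinitely often a.s. After some random time $k_0$, the tails $\sum_{k\ge k_0}\gamma_kU_k$ and $\sum_{k\ge k_0}\gamma_k^2\|G_k\|^2_*/(2K)$ are each below $\delta/4$. Pick an entry time $k\ge k_0$ with $F(\Xopt,Y_k)<\delta/2$, and a point $\xo$ that nearly attains the infimum. I would then show by induction that $D_j<\delta$ for all $j\ge k$. Summing the recursion from $k$ to $j$, the drift contributions are nonpositive, so they can only decrease $D_j$. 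The noise and quadratic terms add at most $\delta/2$. Hence $D_j<\delta$, i.e. $X_j\in\B_F(\Xopt,\delta)$ for all $j\ge k$. The subtlety is that $\xo$ is fixed once chosen, whereas $F(\Xopt,\cdot)$ is an infimum; since $F(\Xopt,Y_j)\le D_j$, it suffices to track this single $\xo$.

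\textbf{Step 2: convergence to $\Xopt$.} By (\ref{Fenchel1}), $\B_F(\Xopt,\delta)\subset\B(\Xopt,\sqrt{2\delta/K})$. Taking $\delta=1/m$ for each $m$ and intersecting countably many probability-one events gives ${\rm dist}(\Xopt,X_k)\to0$ a.s. To upgrade this to convergence to a single (random) point, I would first use that $\sum\gamma_k\braket{g(X_k)}{X_k-\xo}<\infty$ a.s. for every $\xo$ in a countable dense subset of $\Xopt$. This gives a.s. convergence of $D_k(\xo)$ for each such $\xo$, by the Robbins–Siegmund lemma. Let $X^*\in\Xopt$ be a limit point of $X_k$. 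Continuity of $F_h(\cdot,y)$ in $x$, uniform on bounded dual sets, together with (A4) (which gives $F_h(X^*,Y_{k_i})\to0$ along the subsequence) should then yield $F_h(X^*,Y_k)\to0$ along the whole sequence, hence $X_k\to X^*$ by (\ref{Fenchel1}). This step is the main obstacle. The cleanest route I would try first is to apply the Robbins–Siegmund lemma directly at $X^*$: by (A4) the limit of $D_k(X^*)$ must be $0$. The catch is that $X^*$ is random, so the argument has to be made simultaneously for all $\xo$, by approximation from the countable dense set.

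\textbf{Step 3: back to problem $\C$.} The limit $X^*$ lies in $\Xopt$, so it is a global and in particular local minimizer of $\Up$. By the hypothesis on $p$ (Theorem \ref{t2p}), $X^*$ is a local minimum of $\C$, which gives the claim.
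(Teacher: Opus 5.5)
Your energy recursion is the same as the paper's (\ref{recurrence-1})--(\ref{recurrence-5}), and the paper's written proof stops there. Your Steps 1--2 supply the passage to almost sure convergence that the paper leaves to \cite{boyd}. Up to ${\rm dist}(\Xopt,X_k)\to 0$ this is sound, with one small repair. The noise term in the recursion is $\gamma_j\braket{U_j}{\xo-X_j}$, not $\gamma_jU_j$. Since $\xo$ is chosen at a random entry time, you must control two martingales: the vector martingale $\sum_j\gamma_jU_j$, which handles the $\xo$ part uniformly over the bounded set $\Xopt$, and the scalar martingale $\sum_j\gamma_j\braket{U_j}{X_j}$, which is $L^2$-bounded because $\X$ is compact.

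The genuine gap is the upgrade to a single limit point, which the statement requires. Your transfer via continuity of $F_h(\cdot,y)$ uniformly on bounded dual sets does not apply, because $Y_k$ is typically unbounded. For example, take $\X=[0,1]^2$, $h=\frac12\norm{x}_2^2$ (Euclidean projection) and objective $x_1$. Then $Y_k^{(1)}\to-\infty$, while $\Xopt=\{0\}\times[0,1]$ is not a singleton. A priori, $|F_h(x,Y_k)-F_h(x',Y_k)|$ is only bounded by $|h(x)-h(x')|+\norm{Y_k}_*\norm{x-x'}$, which degenerates as $\norm{Y_k}_*\to\infty$. So convergence of $D_k$ on a countable dense $Q\subset\Xopt$ says nothing, by this route, about $D_k(X^*)$ at the random cluster point $X^*$.

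The missing observation is that these differences are affine in $Y_k$: for $x,x'\in\Xopt$, $F_h(x,Y_k)-F_h(x',Y_k)=h(x)-h(x')-\braket{Y_k}{x-x'}$. Your Robbins--Siegmund step on $Q$ therefore gives a.s.\ convergence of $\braket{Y_k}{w}$ for each $w$ in a basis of ${\rm span}(\Xopt-\Xopt)$ chosen from $Q-Q$. By linearity, $D_k(x)$ then converges for every $x\in\Xopt$ simultaneously, on one full-measure event. Only the component of $Y_k$ along the directions of $\Xopt$ has to stay bounded, and it does.

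Evaluating at $X^*(\omega)$, {\bf (A4)} along the subsequence $X_{k_i}\to X^*$ forces $\lim_k D_k(X^*)=0$, and (\ref{Fenchel1}) gives $X_k\to X^*$. With this, your Step 1 is actually unnecessary, since Proposition \ref{recurrence} plus compactness already gives a cluster point in $\Xopt$.

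In Step 3 you can also conclude more than local optimality. $X^*$ is feasible by the hypothesis on $p$, it minimizes $P_p\ge f$ over $\X$, and $P_p=f$ on the feasible set, so $X^*$ is a global minimizer of $\C$.
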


The proof of {\it Theorem \ref{globalConvergence}} is different from the proof of Theorem 4.1 in \cite{boyd}) due to the fact that in the step (\ref{SMD_step1}) of the iterative process  subgradients instead of gradients are used.
\begin{proof}
Define a random sequence:
\begin{equation}
    F_k \coloneq F(\xo, Y_k)
\end{equation}
From {\it Lemma \ref{lemmaBound}}, for all $\varepsilon>0$, there exists $c_\varepsilon > 0$, such that for all $x \in \X \setminus \B(\xo, \varepsilon)$ and $\xo \in \Xopt$
\begin{equation}
\cdder{P}{x,p}{\xo - x} \leq -c_\varepsilon < 0
\end{equation} 
In terms of generalized derivative, for all $\varepsilon>0$ there exists $c_\varepsilon > 0$, such that for all $x \in \X \setminus \B(\Xopt, \varepsilon)$, $\xo \in \Xopt$ and $g \in \partial_C f(x)$
\begin{equation}
\braket{g}{\xo - x} \leq -c_\varepsilon < 0 \nonumber 
\end{equation} 
An upper bound to $F_{k+1} - F_k$ is derived as follows:
\begin{align}
F_{k+1} & = F(\xo, Y_{k+1})  = F(\xo, Y_{k} - \gamma_k G_k(X_k))
\label{recurrence-1}\\ 
& \leq F(\xo, Y_k) + \frac{1}{2K} \| Y_k - \gamma_k G_k(X_k) - Y_k\|^2_*
\nonumber \\
& + \braket{Y_k - \gamma_k G_k(X_k) - Y_k}{\Mirror_h(Y_k) - \xo}
\label{recurrence-2}\\ 
& = F_k + \frac{\gamma_k^2}{2K} \|G_k(X_k)\|^2_* - \gamma_k \braket{G_k(X_k)}{X_k - \xo}
\label{recurrence-3}\\ 
& = F_k + \frac{\gamma_k^2}{2K} \|G_k(X_k)\|^2_* + \gamma_k \braket{U_k - g(X_k)}{X_k - \xo} \label{recurrence-4}
\\ 
& \leq F_k + \frac{\gamma_{k}^2}{2K} \|G_k(X_k)\|^2_* + \gamma_{k}\braket{U_k}{X_k - \xo} - \gamma_k c_\varepsilon\label{recurrence-5}
\end{align}
where (\ref{recurrence-1}) follows from the algorithm definition, (\ref{recurrence-2}) from (\ref{Fenchel2}), (\ref{recurrence-3}) from terms rearranging, (\ref{recurrence-4}) from definition of $U_k$, and (\ref{recurrence-5}) from {\it Lemma \ref{lemmaBound}}.
\end{proof}
%



\begin{corollary}[Convergent subsequence] With probability 1 there exists a subsequence of $X_n$, that is convergent to a random minimum point.
\end{corollary}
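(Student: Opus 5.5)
The corollary follows from Proposition \ref{recurrence} combined with compactness of $\Xopt$. Fix a decreasing sequence $\varepsilon_m = 1/m$. By Proposition \ref{recurrence}, for each $m$ there is an event $\Omega_m$ of probability one on which the iterates $X_k$ enter $\B(\Xopt,\varepsilon_m)$ infinitely many times. Intersecting these countably many events gives $\Omega' = \bigcap_m \Omega_m$, which still has probability one. On $\Omega'$, the iterates enter every neighborhood $\B(\Xopt,1/m)$ infinitely often.

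Fix $\omega\in\Omega'$ and build indices recursively. Choose $k_1$ with $X_{k_1}\in\B(\Xopt,1)$. Given $k_{m-1}$, choose $k_m>k_{m-1}$ with $X_{k_m}\in\B(\Xopt,1/m)$; this is possible because the visits are infinitely many. Then $\mathrm{dist}(\Xopt,X_{k_m})\to 0$. Pick $z_m\in\Xopt$ with $\|X_{k_m}-z_m\|<1/m$, using the definition of distance as an infimum. Since $\Xopt$ is closed in the compact set $\X$, it is compact, so $(z_m)$ has a subsequence converging to some $\xo(\omega)\in\Xopt$. Along that same subsequence $X_{k_m}\to\xo(\omega)$ as well, by the triangle inequality. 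This gives the convergent subsequence, and its limit lies in $\Xopt$.

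It remains to check that the limit is a minimum point of $\C$. Points of $\Xopt$ are global, hence local, minimizers of $\Up$. By the hypothesis on $p$ in Proposition \ref{recurrence}, Theorem \ref{t2p} (equivalently Theorem \ref{t3p}) applies and shows they are local minima of $\C$. They are in fact global minima of $\C$: by Theorem \ref{t2p} they are feasible, and on the feasible set $P_p=f$, so $f(\xo)=P_p(\xo)\le P_p(x)=f(x)$ for every feasible $x$.

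The one delicate point is measurability, since the limit is a "random" point and the index choice depends on $\omega$. I would handle this with a measurable selection. Take $k_m(\omega)$ to be the first index after $k_{m-1}(\omega)$ at which the iterate lies in the open set $\B(\Xopt,1/m)$; this is a stopping time and therefore measurable. The limit can then be taken as, for example, a $\limsup$ coordinatewise along a measurably chosen convergent subsequence. If full measurability is not required, the pathwise argument above suffices, because the claim is only that almost surely such a subsequence exists.
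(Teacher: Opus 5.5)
Your argument is correct. The paper gives no proof of this corollary. It is stated right after Theorem \ref{globalConvergence}, so the implied derivation is the trivial one: if the whole sequence $X_k$ converges almost surely to a minimum point, every subsequence does.

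You instead derive it from Proposition \ref{recurrence} alone. You intersect the countably many full-measure events for $\varepsilon_m = 1/m$, extract indices $k_m$ with $\mathrm{dist}(\Xopt, X_{k_m}) < 1/m$, and use compactness of $\Xopt$ to obtain a limit in $\Xopt$. The theorem route is a one-liner, but it inherits everything Theorem \ref{globalConvergence} needs. That includes {\bf (A5)} and the supermartingale argument, which the paper's proof of that theorem leaves unfinished: it stops after the one-step bound on $F_{k+1}-F_k$. Your route needs only recurrence and compactness, so it holds under the hypotheses of Proposition \ref{recurrence}. It is the more robust justification of a subsequence-only statement.

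Two small remarks. First, you do not need Theorem \ref{t2p}, whose hypotheses (those of Theorem \ref{t1p}) are not among the assumptions of Proposition \ref{recurrence}. The standing assumption there, that local minimizers of $\Up$ are local minimizers of $\C$, already makes every point of $\Xopt$ feasible. Your comparison $f(\xo)=P_p(\xo)\le P_p(x)=f(x)$ for feasible $x$ then gives global optimality for $\C$.

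Second, your measurability paragraph is vaguer than it needs to be. A coordinatewise $\limsup$ is only meaningful once a convergent subsequence has been chosen measurably, for example via a measurable selection of a cluster point followed by hitting times of shrinking balls around it. Since the statement only asserts almost-sure existence of a convergent subsequence, your pathwise argument already suffices.
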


As in \cite{boyd} (Theorem 5.3), under weaker assumption than variational coherence, local convergence can be established.
\begin{definition}[Weak Variational Coherence]\label{definition-weak-variational-coherence}
We say $f$ is weakly variationally coherent if 
there exists $\xo \in \Xopt$ such that
\begin{align}
    &
    \cdder{f}{x}{\xo - x} \leq 0,\ \forall x\in \X
    \\ &
    \cdder{f}{x}{\xo - x} = 0 \implies x \in \Xopt,
\end{align}
and for any $\xo \in \Xopt$ there exists $\varepsilon > 0$, such that for all $x \in \B(\xo, \varepsilon)$
\begin{align}
    \cdder{f}{x}{\xo - x} \leq 0
\end{align}
\end{definition}

\begin{theorem}[Convergence for weakly coherent problems]\label{theorem-wekly coherent}
Let $\Up$ be weakly variationally coherent with $p$ sufficiently large so that local minimizers of $\Up$ are local minimizers of $\C$. Suppose that the assumptions ${\bf (A1)}$--${\bf (A5)}$ and constraint qualification ${\bf (CQ)}$ are satisfied then $\{X_k\}$  converges with probability 1 to a (possibly random) minimum point of the problem $\C$.
\end{theorem}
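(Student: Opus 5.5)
The plan is to follow the structure of Theorem 5.3 in \cite{boyd}, replacing gradients by Clarke's generalized gradients as in Lemma \ref{lemmaBound} and Theorem \ref{globalConvergence}. Fix the point $\xo\in\Xopt$ provided by the first part of Definition \ref{definition-weak-variational-coherence}. For this $\xo$ the two global conditions are exactly (\ref{varcoh-1-<=0})--(\ref{varcoh-2-opt}), restricted to a single minimizer. First I will check that Lemma \ref{lemmaBound} remains valid in this setting. The compactness argument used there only needs the bound $\cdder{P_p}{x}{\xo-x}\le 0$ for all $x\in\X$ and the implication $\cdder{P_p}{x}{\xo-x}=0\Rightarrow x\in\Xopt$. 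Both hold for the fixed $\xo$, and upper semicontinuity of the Clarke derivative is unchanged. This gives, for every $\eps>0$, a constant $c_\eps>0$ with $\braket{g}{\xo-x}\le -c_\eps$ for all $g\in\partial_C P_p(x)$ and all $x\in\X\setminus\B(\Xopt,\eps)$.

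With this bound, I will reproduce the recurrence part of Proposition \ref{recurrence}. Set $F_k=F_h(\xo,Y_k)$ and use (\ref{Fenchel2}) to obtain the inequality (\ref{recurrence-5}). Summing it, the noise term $\sum\gamma_k\braket{U_k}{X_k-\xo}$ is a martingale whose quadratic variation is bounded by $\sum\gamma_k^2\,\mathrm{diam}(\X)^2 M^2<\infty$, using {\bf (A2)}, compactness of $\X$ and {\bf (A5)}. Hence it converges almost surely. The term $\sum\gamma_k^2\|G_k\|_*^2$ is summable in expectation. If $X_k$ stayed outside $\B(\Xopt,\eps)$ from some index on, then $F_k\to-\infty$ because $\sum\gamma_k=\infty$, which contradicts $F_k\ge0$. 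So $X_k$ enters every $\eps$-neighbourhood of $\Xopt$ infinitely often, almost surely. By {\bf (A4)} it also enters every Fenchel zone $\B_F(\Xopt,\delta)$ infinitely often.

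Next comes the local stability step, which uses the third condition of weak coherence. Take $\xo'\in\Xopt$ and its radius $\eps'$, on which $\cdder{P_p}{x}{\xo'-x}\le0$. Choose $\delta$ so small that $\B_F(\xo',\delta)\subset\B(\xo',\eps'/2)$, which is possible by (\ref{Fenchel1}). Then show that once $F_h(\xo',Y_k)<\delta/2$ and the tail of the noise is small, the iterates cannot leave $\B(\xo',\eps')$. Inside the ball the drift term $-\gamma_k\braket{g}{X_k-\xo'}$ is nonpositive. Therefore $F_h(\xo',Y_n)\le F_h(\xo',Y_k)+\sum_{j\ge k}\gamma_j\braket{U_j}{X_j-\xo'}+\frac1{2K}\sum_{j\ge k}\gamma_j^2\|G_j\|_*^2$, and the last two tail sums can be made below $\delta/2$ with high probability, by Doob's maximal inequality or by the almost sure convergence above. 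Combining recurrence with this trapping shows that $F_h(\xo',Y_k)$ eventually stays below an arbitrary $\delta$. By (\ref{Fenchel1}), $X_k\to\xo'$. Here $\xo'$ is random, being selected by which neighbourhood captures the sequence; a countable dense family of $\delta$'s and a diagonal argument handle this.

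Finally, $\xo'$ is a minimizer of $\Up$, hence a local minimizer. By the choice of $p$ (Theorems \ref{t2p}/\ref{t3p}, which rely on {\bf (CQ)}), it is a local minimizer of $\C$. I expect the main obstacle to be the trapping step. The weak condition provides a radius $\eps'$ depending on $\xo'$, which need not be uniform over $\Xopt$. Also, (\ref{Fenchel2}) is stated for a fixed anchor while the capturing point is random. My first attempt would be to pass to limit points: pick a subsequence converging to some $\xo'\in\Xopt$ (using the Corollary), then apply the trapping argument with that fixed $\xo'$, conditioning on the event that the subsequence enters $\B_F(\xo',\delta)$ at a time after which the noise tail is below $\delta/2$.
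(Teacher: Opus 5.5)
The paper does not prove this theorem. It is stated right after the remark that local convergence can be established ``as in \cite{boyd} (Theorem 5.3)'', and no argument follows, so there is nothing to compare with beyond that pointer. Your plan is the natural transcription of that result to Clarke derivatives, and it is sound. It has two stages. First, recurrence to $\Xopt$ driven by the single globally coherent minimizer; the one-anchor version of Lemma~\ref{lemmaBound} holds as you say, since its compactness argument only uses the two global conditions for that anchor and the upper semicontinuity of the Clarke derivative. Second, trapping around a cluster point $\xo'$ using the local condition.

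The one step that is not yet an argument is the random anchor, and it is easier than you expect. Do not condition on an event defined through future noise: that would destroy the martingale property you rely on. Inequality (\ref{Fenchel2}) is deterministic and holds for every anchor, so using it with a random $\xo'$ costs nothing. Only the noise sum needs control that is uniform in the anchor.

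\begin{itemize}
\item Split $\sum_{j=k}^{n}\gamma_j\braket{U_j}{X_j-x}=\sum_{j=k}^{n}\gamma_j\braket{U_j}{X_j}-\braket{\sum_{j=k}^{n}\gamma_jU_j}{x}$.
\item The scalar series $\sum_j\gamma_j\braket{U_j}{X_j}$ and the vector series $\sum_j\gamma_jU_j$ are $L^2$-bounded martingales, by \textbf{(A2)}, \textbf{(A5)} and compactness of $\X$.
\item Hence on one full-measure event $\sup_{x\in\X}\sup_{n\ge k}\bigl|\sum_{j=k}^{n}\gamma_j\braket{U_j}{X_j-x}\bigr|\to0$ as $k\to\infty$. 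Also $\sum_{j\ge k}\gamma_j^2\norm{G_j}_*^2\to0$, using \textbf{(A3)} as well.
\end{itemize}

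On that event, intersected with the recurrence event, the rest is pathwise:
\begin{itemize}
\item Pick a subsequence $X_{k_n}\to\xo'\in\Xopt$. Then $F_h(\xo',Y_{k_n})\to0$ by \textbf{(A4)}.
\item The local radius $\eps'$ of $\xo'$ is now just a fixed positive number, so its non-uniformity over $\Xopt$ is irrelevant.
\item For $\delta\in(0,K(\eps')^2/8)$, start your induction at a $k_n$ where $F_h(\xo',Y_{k_n})$ and both tails (the second divided by $2K$) are below $\delta/3$.
\item While $F_h(\xo',Y_m)<\delta$, (\ref{Fenchel1}) keeps $X_m\in\B(\xo',\eps'/2)$. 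There the drift satisfies $\gamma_m\braket{g(X_m)}{\xo'-X_m}\le\gamma_m P_p^{\circ}(X_m;\xo'-X_m)\le0$.
\item Hence $F_h(\xo',Y_m)<\delta$ for all $m\ge k_n$. Since $\delta$ is arbitrary, $F_h(\xo',Y_m)\to0$ and $X_m\to\xo'$.
\end{itemize}

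Your final step is fine. In fact $\xo'$ is then feasible and $P_p=f$ on the feasible set, so it is even a global minimizer of $\C$.
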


\begin{figure}
    \centering
    \includegraphics[width=0.9\linewidth]{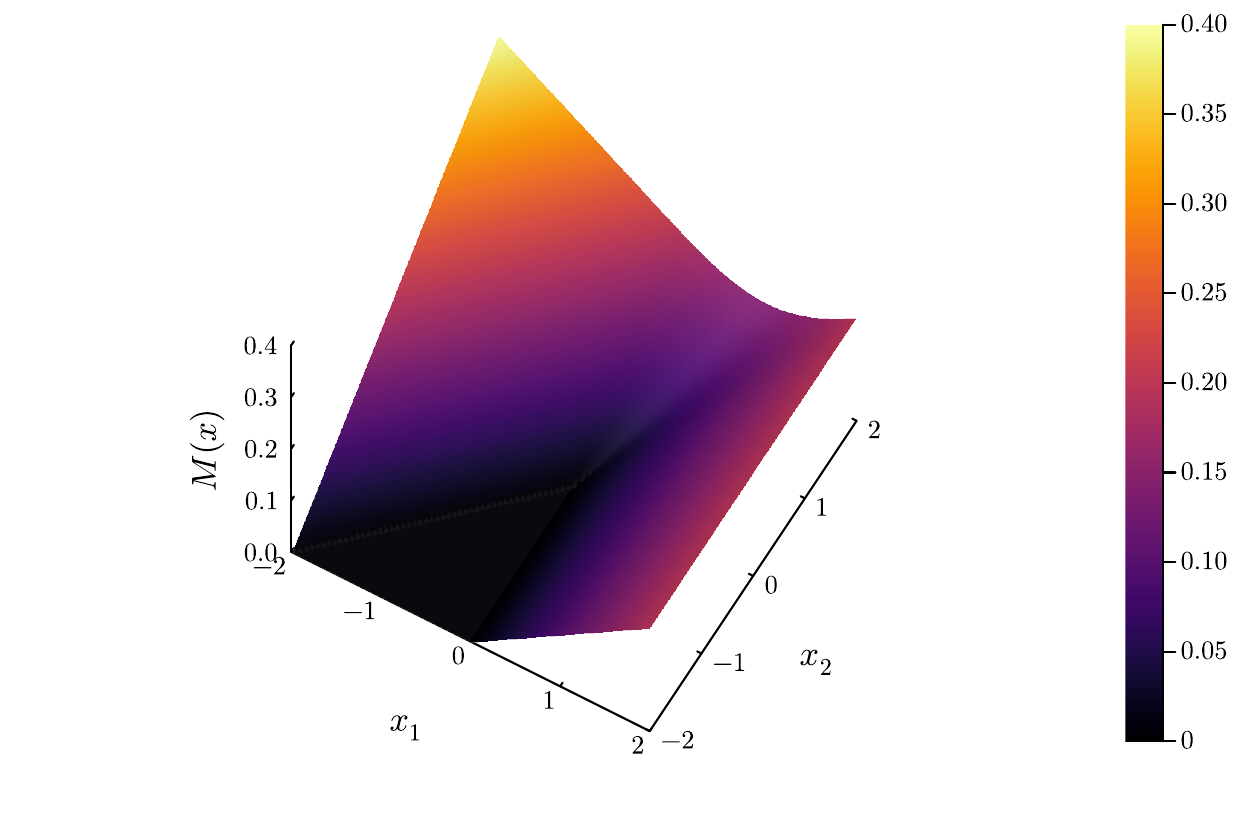}
    \caption{3D visualization of the well-behaved $l_2$ penalty function for constraints $x_1 \leq 0,\ x_2-x_1 \leq 0$. The distinctive feature of this formulation is that the function is locally differentiable outside the feasible set. At the same time it models well a sharp minimum defined by constraints.}
    \label{fig:3dsmooth}
\end{figure}

\section{The update rules for penalty parameters}

In general, there are two approaches to developing algorithms based on exact penalty functions. In the first approach, we assume that we have a limiting value of the parameter $p$ of the exact penalty function that guarantees that minimizing the function $P_p$ leads to the solution of the constrained problem. Algorithms based on this approach are essentially: 1) sophisticated descent methods for solving problems without constraints defined by nondifferentiable functions with a special structure (\cite{Conn73}); 2) or algorithms in which the direction of descent is determined by solving a linear or quadratic problem that is a local approximation of the problem $\C$ (\cite{Han77}, \cite{ConnPiet77}, \cite{Toint2011}).

In the second approach, the problem $P_p$ and the parameter $p$ adjustment are performed in parallel in such a way that a threshold value of the parameter $p$ is determined that ensures the solution of the problem $\C$. The second approach was initiated in the work \cite{Polak76}, where a testing function was introduced to ensure the proper selection of the parameter $p$ and the conditions that it must meet in order to determine the desired value of $p$ in the iterative process were formulated.

If simple constraints $\X$ are not present in problem $\C$, then the testing function refers to approximations of the Lagrange multipliers occurring in the KKT conditions of problem $\C$, obtained by solving linear or quadratic problems that are approximations of problem $\C$ (\cite{MayneMaratos79}, \cite{PantojaMayne79}). Basing the testing function on approximations of the Lagrange multipliers is possible due to the fact that the threshold value of the parameter $p$ guaranteeing the solution of the problem $\C$ can be set to the maximum value of the absolute values of the Lagrange multipliers occurring in the KKR conditions of the solution of the problem $\C$ (\cite{Luenberger70}].

If the formulation of problem $\C$ contains simple constraints $\X$, then the testing function is based on the optimal values of linear or quadratic problems that approximate problem $\C$ (\cite{MaynePolak80}, \cite{MaynePolak87}, \cite{pv98}).

However, the case of the optimization problem considered in this paper does not allow the application of any of the penalty parameter updating schemes described in the literature. Since the considered optimization problem contains simple constraints $\X$, we should apply a rule based on solving a quadratic problem approximating the problem $\C$; the use of stochastic gradient approximation for the objective function—--combined with the application of the proposed approach to optimization problems in machine learning (where the number of decision variables can run into the millions)—--precludes this.


First, we present a simplified framework where gradients of the objective function $f$ are evaluated exactly, and the strict domain constraint $\X$ is not present ($\X = \R^n$). For the theoretical analysis in this section, we assume that the sequence of iterates generated by the algorithm remains strictly bounded. Note that this assumption is a standard regularization and holds naturally under typical coercivity conditions on the objective function.

The update rule for the penalty parameter relies on the exact penalty function formulation:
\begin{equation}
    P^\beta_p (x) = f(x) + p \norm{g(x)_{+}, h(x)}_\beta, \quad 1 < \beta < \infty.
\end{equation}
For $\beta \in (1, \infty)$ the function $P^\beta_p$ is continuously differentiable at any strictly infeasible point. At these points, its gradient is given by $\der{P^\beta_p}{x}{} = \der{f}{x}{} + p \, g_\beta(x)$, where:
\begin{align}
    g_\beta(x) &= \sum_{i\in E} \sigma_i(x,\beta) \der{h_i}{x}{} + \sum_{j\in I} \eta_j(x,\beta)\der{g_j}{x}{}, \nonumber \\
    \sigma_i (x,\beta) &= \norm{g_{+}(x),h(x)}^{1-\beta}_\beta \left | h_i(x)\right |^{\beta -1} \text{sgn}(h_i(x)), \nonumber \\ 
    \eta_j(x,\beta) &= \norm{g_{+}(x),h(x)}^{1-\beta}_\beta  \left (g(x)_{+}\right )^{\beta - 1}_{j}, \nonumber
\end{align}
for $i\in E$ and $j\in I$. 

We state the adaptive algorithm utilizing dual space updates and a mirror mapping to the primal space:

\begin{algorithm}[H]
\caption{(MD for exact penalty function)}
\label{alg:smdp}
\begin{algorithmic}[1]
\Require Initial dual point $Y_0 \in \R^n$, $\kappa > 1$, $\beta \in (1,+\infty)$, initial penalty $p_0 > 0$, and stepsizes $\{\gamma_k\}$.
\For {$k = 0, 1, 2, \dots$}
    \State $X_k = \Mirror_h(Y_k)$
    \State $g_f = \der{f}{X_k}{}$, $g_\beta = g_\beta(X_k)$
    \State $p = p_k$
    \If {$M_\beta (X_k) > 0$}
        \While {$DP^\beta_p(X_k; -g_f - p g_\beta) + M_\beta (X_k)/p > 0$}
            \State $p = \kappa p$
        \EndWhile
    \EndIf
    \State $p_{k+1} = p$
    \State $Y_{k+1} = Y_k - \gamma_k (g_f + p_{k+1} g_\beta)$
\EndFor
\end{algorithmic}
\end{algorithm}

The while-loop evaluates the continuous directional derivative to verify if the penalty parameter enforces sufficient descent. To prove that the sequence $\{p_k\}$ remains bounded, we define the following constraint qualification adapted for the variationally coherent framework.



\begin{definition}[Constraint Qualification -- ${\bf (CQ_{VC})}$]\label{def:cqcv}
There exist $v\in \R^n$, $K_1 > 0$, and $K_2 > 0$ such that for all $x \in \R^n$, the constraint directional derivatives along the direction $d = v - x$ satisfy:
\begin{align}
    & \sgn(h_i(x)) \mul{\der{h_i}{x}{}}{d} \leq -K_1 \frac{|h_i(x)|}{M_\infty(x)}, \quad \forall i\in E, \label{CQVC1} \\
    & \mul{\der{g_j}{x}{}}{d} \leq -K_2, \quad \forall j\in I_+(x). \label{CQVC2}
\end{align}
where $I_{+}(x) = \{j\in I \mid g_j(x) > 0\}$.
\end{definition}

If functions $g_j$, $j\in I$ are convex, functions $h_i$, $i\in E$ are linear, and for the problem $\C$ the Slater constraint qualification holds, then (\ref{CQVC1})--(\ref{CQVC2}) will hold. Suppose that there exists $\hat{x}\in \R^n$ such that $g_j(\hat{x}) < 0$, $j\in I$ and $h_i(\hat{x}) = 0$, $i\in E$. Then $0 > g_j(\hat{x}) \geq g_j(x) + \mul{\der{g_j}{x}{}}{\hat{x}-x}, \quad j\in I$, which implies that $\mul{\der{g_j}{x}{}}{\hat{x} - x} \leq g_j(\hat{x}) - g_j(x), \quad j\in I$, and all $x\in \X$. In order to satisfy (\ref{CQVC2}) it is sufficient to take $K_2 = \min_{j\in I} \left [-g_j(\hat{x})\right ]$.

As far as the equality constraints are concerned we have: if $M_{E}^i(x) = |h_i(x)|$ then $M_{E}^i(x) \geq M_{E}^i(\hat{x}) = 0$ for any $x\in \X$. Function $M_E^i$ is convex since $h_i$ are linear and $|\cdot|$ is convex. Thus, we have $M_E^i(\hat{x}) \geq M_E^i(x) + \mul{g}{\hat{x}-x}$ for any $g\in \partial M_E^i(x)$. In particular, we have:
\begin{equation}
    -\frac{|h_i(x)|}{M_\infty(x)} \geq -|h_i(x)| \geq \sgn(h_i(x)) \mul{\der{h_i}{x}{}}{\hat{x}-x}.
\end{equation}
It follows that in order to satisfy (\ref{CQVC1}) we can take $K_1 = 1$.

For clarity of presentation, we assume sequence $\{X_k\}$ remains bounded. The more general case, where the sequence remains in the compact set $\X$ is presented in the following subsection.

\begin{lemma}\label{boundedPenalty}
Suppose that $f$ is continuously differentiable, ${\bf (A3)}$ is satisfied and  ${\bf (CQ_{VC})}$ holds. Assume that the sequence $\{X_k\}$ is bounded, then the sequence of penalty parameters $\{p_k\}$ generated by Algorithm \ref{alg:smdp} is bounded.
\end{lemma}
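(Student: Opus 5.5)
The plan is to show that the while-loop test in Algorithm \ref{alg:smdp} is satisfied as soon as $p$ exceeds a threshold $\hat p$ that does not depend on $k$. The loop only runs when $M_\beta(X_k)>0$ and multiplies $p$ by $\kappa$ each pass, so this would give $p_k\le \max(p_0,\kappa\hat p)$ for all $k$. Since $\{X_k\}$ is bounded, we fix a compact set $S$ containing all iterates and the point $v$ from ${\bf (CQ_{VC})}$. By continuity of $\nabla f,\nabla h_i,\nabla g_j$ on $S$ we get bounds $\|\nabla f(x)\|_\star\le L_f$, $\|\nabla h_i(x)\|_\star,\|\nabla g_j(x)\|_\star\le L$ and $\|v-x\|\le D$ for $x\in S$.

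First I would bound the penalty gradient $g_\beta$ from above and below. From the definitions of $\sigma_i,\eta_j$ and H\"older's inequality, $\sum_i|\sigma_i|+\sum_j\eta_j\le c$ for a constant depending only on $\beta$ and $n_E+n_I$, so $\|g_\beta(x)\|_\star\le cL$. For the lower bound I pair $g_\beta$ with $d=v-x$ and use (\ref{CQVC1})--(\ref{CQVC2}):
\begin{equation*}
\langle g_\beta(x),d\rangle\le -\norm{g_+(x),h(x)}_\beta^{1-\beta}\Big[K_1\sum_{i}\frac{|h_i|^{\beta}}{M_\infty(x)}+K_2\sum_{j\in I_+}(g_j)_+^{\beta-1}\Big].
\end{equation*}
This is bounded above by $-\min(K_1,K_2')\,c'_\beta<0$, using norm equivalence exactly as in the proof of Theorem \ref{t1p}; the inequality part is handled by comparing $(g_j)_+^{\beta-1}$ with $(g_j)_+^{\beta}/M_\infty$. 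Consequently $\|g_\beta(x)\|_\star\ge \mu:=\min(K_1,K_2')c'_\beta/D>0$ uniformly on the infeasible part of $S$.

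Next I would analyse the test quantity. At an infeasible point $P^\beta_p$ is differentiable, so $DP^\beta_p(X_k;-g_f-pg_\beta)=-\|g_f+pg_\beta\|_2^2$, reading the pairing in the Euclidean sense as in the algorithm, with $\R^n$ identified with its dual. For $p\ge 2L_f/\mu$ we have $\|g_f+pg_\beta\|\ge p\mu-L_f\ge p\mu/2$, hence the test quantity is at most $-p^2\mu^2/4+M_\beta(X_k)/p$. Since $M_\beta$ is continuous, it is bounded on $S$ by some $\bar M$. The quantity is therefore negative once $p^3>4\bar M/\mu^2$. Taking $\hat p=\max\big(2L_f/\mu,(4\bar M/\mu^2)^{1/3}\big)$, the loop terminates with $p\le\kappa\hat p$ whenever it starts below $\hat p$, and never increases $p$ if it starts above $\hat p$. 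This gives boundedness.

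The main obstacle is the uniform lower bound on $\|g_\beta\|$ near the feasible set, where $M_\infty\to0$ and the weights $\sigma_i,\eta_j$ are only ratios. The normalisation by $M_\infty$ in (\ref{CQVC1}) is what keeps the bound scale-invariant. For the inequality constraints I would first try to show $\sum_{j\in I_+}(g_j)_+^{\beta-1}\ge \|g_+\|_\beta^{\beta}/M_\infty$, which makes both groups homogeneous of the same degree. If that fails, the fallback is to restrict to the case where one group dominates $M_\infty$ and bound each case separately.
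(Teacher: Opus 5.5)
Your proposal is correct and follows essentially the paper's argument: ${\bf (CQ_{VC})}$ gives a uniform negative bound on $\langle g_\beta(x), v-x\rangle$, Cauchy--Schwarz turns this into a lower bound of order $p$ on the gradient norm, and comparing with the bounded threshold $M_\beta(X_k)/p$ yields a cubic inequality in $p$. Two minor differences: you apply Cauchy--Schwarz to $g_\beta$ and then the reverse triangle inequality, whereas the paper applies it to $G_k$ directly; and you track the loop more carefully, obtaining $p_k\le\max(p_0,\kappa\hat p)$.

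The step you flagged as the main obstacle is immediate. Since $0<(g_j)_+\le M_\infty(x)$, you have $(g_j)_+^{\beta-1}\ge (g_j)_+^{\beta}/M_\infty(x)$, and therefore $\langle g_\beta(x),v-x\rangle\le -\min(K_1,K_2)\,\|g_+(x),h(x)\|_\beta/M_\infty(x)\le-\min(K_1,K_2)$. No fallback case split is needed.
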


\begin{proof} 
Consider $X_k$, at which penalty is increased. Let $G_k = \der{f}{X_k}{} + p_k g_\beta(X_k)$. The continuous directional derivative in the search direction $-G_k$ is $DP^\beta_p(X_k; -G_k) = -\norm{G_k}^2$. If penalty is increased, it is necessary that 
\begin{equation}
    \norm{G_k}^2 < \frac{M_\beta(X_k)}{p_k}. \label{eq:while_bound}
\end{equation}
Let $d_k = v - X_k$, where $v$ is the fixed global reference point from ${\bf (CQ_{VC})}$.

By substituting the bounds from ${\bf (CQ_{VC})}$ into the penalty subgradient expansion, we have:
\begin{align*}
    \mul{g_\beta(X_k)}{d_k} &= \sum_{i \in E} |\sigma_i(X_k, \beta)| \, \xi_i(X_k, d_k) + \sum_{j \in I_+(X_k)} \eta_j(X_k, \beta) \, \zeta_j(X_k, d_k) \\ 
    &\leq -K_1 \sum_{i \in E} |\sigma_i(X_k, \beta)| \frac{|h_i(X_k)|}{M_\infty(X_k)} - K_2 \sum_{j \in I_+(X_k)} \eta_j(X_k, \beta).
\end{align*}

Let $K_3 = \min(K_1, K_2)$. Because the algorithm only tests the penalty update when $X_k$ is strictly infeasible, i.e., $M_\beta(X_k) > 0$, finite-dimensional norm equivalence guarantees that the summation of coefficients is strictly bounded below by a constant $\nu > 0$. Thus, $\mul{g_\beta(X_k)}{d_k} \leq -K_3 \nu$.

By the assumption that $\{X_k\}$ is bounded, and the objective function $f$ is smooth (Assumption {\bf (A1)}), the objective gradient $\der{f}{X_k}{}$ is bounded. Therefore, there exists a finite constant $\delta > 0$ such that $\mul{\der{f}{X_k}{}}{d_k} \le \delta$. Combining these bounds yields:
\begin{equation}
    \mul{G_k}{d_k} \le \delta - p_k K_3 \nu.
\end{equation}

By Cauchy-Schwarz we get $\mul{G_k}{d_k} \ge -\norm{G_k} \norm{d_k}$, and since $\{X_k\}$ is bounded and $v$ is fixed, there exists $D > 0$ bounding the distance from above $\norm{d_k} = \norm{v - X_k} \leq D$. Combining this with the previous inequality yields:
\begin{equation}
    -\norm{G_k} D \le \delta - p_k K_3 \nu \implies \norm{G_k} \ge \frac{p_k K_3 \nu - \delta}{D}.
\end{equation}

Substituting this lower bound for $\norm{G_k}$ back into the while-loop condition (\ref{eq:while_bound}) yields:
\begin{equation}
    \left( \frac{p_k K_3 \nu - \delta}{D} \right)^2 \le \norm{G_k}^2 < \frac{M_\beta(X_k)}{p_k},
\end{equation}
and multiplying both sides by $p_k D^2$ gives:
\begin{equation}
    p_k (p_k K_3 \nu - \delta)^2 < D^2 M_\beta(X_k).
\end{equation}

Since $\{X_k\}$ is bounded and the constraint functions are continuous, the violation $M_\beta(X_k)$ is bounded above by a finite constant $M_{\max}$. Thus, we obtain:
\begin{equation}
    p_k (p_k K_3 \nu - \delta)^2 < D^2 M_{\max},
\end{equation}
showing that $p_k$ cannot diverge to infinity.
\end{proof}

\begin{lemma}[Feasibility of the Limit Point]\label{lem:feasibility}
Suppose the assumptions of Lemma \ref{boundedPenalty} hold. Let $\{X_k\}$ be the sequence generated by Algorithm \ref{alg:smdp}, and assume it converges to a stationary point $\xo$ of the penalty function. Then, $\xo$ must be feasible, i.e., $M_\beta (\xo) = 0$.
\end{lemma}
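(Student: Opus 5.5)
We argue by contradiction: suppose $M_\beta(\xo) > 0$. The plan combines three ingredients. Lemma \ref{boundedPenalty} gives eventual stabilization of the penalty parameter. The constraint qualification ${\bf (CQ_{VC})}$ supplies a uniform descent direction at infeasible points. Continuity of the gradient of $P^\beta_p$ away from the feasible set then forbids $\xo$ from being stationary.

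First, by Lemma \ref{boundedPenalty} the sequence $\{p_k\}$ is bounded. Since it is nondecreasing and every increase multiplies it by $\kappa > 1$, it changes only finitely many times. Hence there are $\bar p$ and $k_0$ with $p_k = \bar p$ for all $k \geq k_0$. Interpret ``stationary point of the penalty function'' as stationarity of $P^\beta_{\bar p}$. Because $M_\beta(\xo) > 0$ and $M_\beta$ is continuous, a neighborhood of $\xo$ consists of strictly infeasible points. On that neighborhood $P^\beta_{\bar p}$ is continuously differentiable, with gradient $\der{f}{x}{} + \bar p\, g_\beta(x)$. Stationarity therefore means
\begin{equation*}
\der{f}{\xo}{} + \bar p\, g_\beta(\xo) = 0 .
\end{equation*}

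Second, take $v$ from ${\bf (CQ_{VC})}$ and set $d = v - \xo$. Repeating the computation in the proof of Lemma \ref{boundedPenalty} gives $\mul{g_\beta(\xo)}{d} \leq -K_3 \nu < 0$. Testing stationarity against $d$ then yields
\begin{equation*}
\mul{\der{f}{\xo}{}}{d} = -\bar p\, \mul{g_\beta(\xo)}{d} \geq \bar p K_3 \nu .
\end{equation*}
This is not yet a contradiction, since $\bar p$ is only the final parameter and not necessarily large.

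Third, the contradiction must come from the while-loop test, which forces sufficient descent at every infeasible iterate. For $k \ge k_0$ the loop accepts $\bar p$, so
\begin{equation*}
DP^\beta_{\bar p}(X_k; -G_k) + \frac{M_\beta(X_k)}{\bar p} \le 0,
\qquad\text{that is,}\qquad
\norm{G_k}^2 \ge \frac{M_\beta(X_k)}{\bar p},
\end{equation*}
where $G_k = \der{f}{X_k}{} + \bar p\, g_\beta(X_k)$. Let $X_k \to \xo$. The points $X_k$ are eventually in the infeasible neighborhood, where $\nabla P^\beta_{\bar p}$ is continuous, so $G_k \to \nabla P^\beta_{\bar p}(\xo) = 0$. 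At the same time $M_\beta(X_k) \to M_\beta(\xo) > 0$. The right-hand side thus stays bounded away from zero while the left-hand side tends to zero, which is the desired contradiction. Hence $M_\beta(\xo) = 0$.

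The main obstacle is the first step, namely justifying that the while loop terminates and that $p_k$ genuinely stabilizes. This relies on Lemma \ref{boundedPenalty} together with the fact that $p_k$ only changes by multiplication by $\kappa$. If Lemma \ref{boundedPenalty} is read as bounding only the accepted values $p_k$, I would add a short argument that each loop terminates, using the same quadratic-in-$p$ inequality derived in its proof. A second delicate point is the continuity of $g_\beta$ near $\xo$. This is where strict infeasibility of $\xo$ and $\beta \in (1,\infty)$ are used. The coefficients $\sigma_i$ and $\eta_j$ involve $\abs{h_i}^{\beta-1}$ and $(g_j)_+^{\beta-1}$, and these are continuous even when individual components vanish, because $\beta > 1$ and the normalizing factor $\norm{g_+(x),h(x)}_\beta^{1-\beta}$ stays finite away from the feasible set.
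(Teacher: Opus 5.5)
Your proof is correct and follows the paper's argument: Lemma \ref{boundedPenalty} stabilizes $p_k$ at a finite $\bar p$, continuous differentiability of $P^\beta_{\bar p}$ off the feasible set forces $\nabla P^\beta_{\bar p}(\xo)=0$, and this contradicts the while-loop test, since $\norm{G_k}^2 \to 0$ while $M_\beta(X_k)/\bar p \to M_\beta(\xo)/\bar p > 0$. Your second step with ${\bf (CQ_{VC})}$ is not needed for the contradiction, because the constraint qualification enters only through Lemma \ref{boundedPenalty}, but it does no harm.
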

\begin{proof}
By {\it Lemma \ref{boundedPenalty}}, the sequence of penalty parameters $\{p_k\}$ is strictly bounded. Because $p_k$ is only updated via multiplication by $\kappa > 1$, it must stabilize after a finite number of iterations at a fixed value $p_{\max}$. Thus, for all sufficiently large $k$, $p_k = p_{\max}$.

Assume, on the contrary, that the sequence $\{X_k\}$ converges to an infeasible stationary point $\xo$, meaning $M_\beta (\xo) > 0$. Because the $\beta$-norm formulation ensures $P_{p_{\max}}^\beta(x)$ is continuously differentiable at strictly infeasible points, its gradient must vanish at this stationary point: $\der{P_{p_{\max}}^\beta}{\xo}{} = 0$. 

Evaluating the limit of the continuous test function as $X_k \to \xo$ yields:
\begin{align*}
    \lim_{k \to \infty} \left[ DP_{p_{\max}}^\beta \left( X_k; -\der{f}{X_k}{} - p_{\max} g_\beta(X_k) \right) + \frac{M_\beta(X_k)}{p_{\max}} \right] &= DP_{p_{\max}}^\beta(\xo; 0) + \frac{M_\beta (\xo)}{p_{\max}}
    = \frac{M_\beta (\xo)}{p_{\max}}.
\end{align*}

Because $M_\beta (\xo) > 0$, this limit evaluates to a strictly positive constant. Therefore, for sufficiently large $k$, the test condition in Algorithm \ref{alg:smdp} evaluates to strictly greater than zero, triggering the update rule $p \leftarrow \kappa p$. This forces the sequence $\{p_k\}$ to diverge to infinity, which strictly contradicts the boundedness established in {\it Lemma \ref{boundedPenalty}}. Thus, the assumption $M_\beta (\xo) > 0$ is false, and the algorithm must converge to a point within the feasible set $\C$.
\end{proof}

\begin{theorem}[Global Convergence]\label{SMDPenaltyUpdate}
Suppose $f$ is continuously differentiable and assumptions {\bf (A3)}--{\bf (A5)} hold. 
If the constraint qualification ${\bf (CQ_{VC})}$ holds, the sequence $\{X_k\}$ generated by Algorithm \ref{alg:smdp} is bounded, and the problem $\Up$ is variationally coherent, then $\{X_k\}$ converges to a global minimizer of $\Up$, and this minimizer satisfies the constraints of problem $\C$. 
\end{theorem}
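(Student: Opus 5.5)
The plan is to reduce Theorem \ref{SMDPenaltyUpdate} to the fixed-penalty convergence result, Theorem \ref{globalConvergence}, and then to invoke Lemma \ref{lem:feasibility} for feasibility.

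First, since $\{X_k\}$ is bounded and ${\bf (CQ_{VC})}$, ${\bf (A3)}$ hold with $f$ continuously differentiable, Lemma \ref{boundedPenalty} gives boundedness of $\{p_k\}$. Because the only update is $p\leftarrow\kappa p$ with $\kappa>1$, the sequence $\{p_k\}$ is nondecreasing and takes values in the discrete set $\{p_0\kappa^m\}$. Hence there is a finite index $k_0$ with $p_k=p_{\max}$ for all $k\ge k_0$. From $k_0$ on, Algorithm \ref{alg:smdp} is exactly the (deterministic) mirror descent iteration (\ref{SMD_step1})--(\ref{SMD_step2}) applied to the fixed function $P^\beta_{p_{\max}}$, started at $Y_{k_0}$. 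Since the gradients are exact, the noise term $U_k$ vanishes, and the finitely many initial iterations do not affect asymptotics.

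Second, I will rerun the argument of Proposition \ref{recurrence} and Theorem \ref{globalConvergence} for $P_{p_{\max}}$, using variational coherence of $\Up$ at $p=p_{\max}$. The key inequality is the Fenchel-coupling recursion (\ref{recurrence-1})--(\ref{recurrence-5}) with $U_k=0$:
\[
F_{k+1}\le F_k+\frac{\gamma_k^2}{2K}\|G_k\|_*^2-\gamma_k c_\varepsilon \quad\text{whenever } X_k\notin\B(\Xopt,\varepsilon).
\]
Here $\|G_k\|_*$ is bounded, since $X_k$ is bounded, $p$ is fixed and the gradients are continuous; the generalized gradients of the penalty term are bounded on bounded sets by (A3). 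Lemma \ref{lemmaBound}, whose proof uses only upper semicontinuity of $\cdder{P}{}{}$ and compactness, supplies $c_\varepsilon$; compactness holds for the closure of the bounded set containing the iterates in place of $\X$. Combined with ${\bf (A5)}$, this shows that the iterates enter every $\B(\Xopt,\varepsilon)$ infinitely often. Then ${\bf (A4)}$ and the standard trapping argument of \cite{boyd} (once $F_k$ is small and $\sum_{k\ge n}\gamma_k^2$ is small, $F_k$ cannot escape a $\delta$-zone) give $F(\xo,Y_k)\to0$ for some $\xo\in\Xopt$. By (\ref{Fenchel1}) this yields $X_k\to\xo$, a global minimizer of $P^\beta_{p_{\max}}$.

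Third, a global minimizer of $P^\beta_{p_{\max}}$ is in particular a stationary point of that penalty function. Lemma \ref{lem:feasibility}, which applies since the assumptions of Lemma \ref{boundedPenalty} hold and $X_k\to\xo$, then gives $M_\beta(\xo)=0$, so $\xo$ satisfies the constraints of $\C$.

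The main obstacle is the second step. There I must check that the machinery of Section 3, written for a compact $\X$ and stochastic gradients, carries over to the unconstrained setting with the bounded-iterate assumption. Specifically, I need to confirm three things: Lemma \ref{lemmaBound} can be applied on the compact closure of the iterates' range, the gradient bound replaces ${\bf (A2)}$, and ${\bf (A4)}$ is available for the chosen mirror map on $\R^n$. A further subtlety is that variational coherence must be invoked for the specific random-free value $p_{\max}$ reached by the algorithm. I would state explicitly that the hypothesis ``$\Up$ is variationally coherent'' is understood for $p=p_{\max}$, or for all $p\ge p_0$.
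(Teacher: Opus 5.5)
Your proposal is correct and follows the same route as the paper. You stabilize $p_k$ at $p_{\max}$ via Lemma~\ref{boundedPenalty}, obtain convergence of $\{X_k\}$ to a global minimizer of $P^\beta_{p_{\max}}$ by the mirror-descent argument behind Theorem~\ref{globalConvergence}, and get feasibility of that limit from Lemma~\ref{lem:feasibility}. Your extra checks (working on the compact closure of the iterates, replacing ${\bf (A2)}$ by a deterministic gradient bound, and reading variational coherence at $p=p_{\max}$) make explicit what the paper's short proof leaves implicit. One small adjustment: in Lemma~\ref{lemmaBound} fix a single $\xo\in\Xopt$, since $\Xopt\subset\R^n$ need not be compact in the unconstrained setting, and a fixed $\xo$ is all the Fenchel-coupling recursion uses.
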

\begin{proof}
By {\it Lemma \ref{boundedPenalty}}, the sequence $\{p_k\}$ stabilizes at a finite value $p_{\max}$. Consequently, Algorithm \ref{alg:smdp} ultimately executes standard Stochastic Mirror Descent on the fixed, unconstrained penalty function $P_{p_{\max}}^\beta(x)$. By Theorem \ref{globalConvergence}, under variational coherence and the specified step size criteria, the sequence $\{X_k\}$ converges to a global minimizer $\xo$ of $P_{p_{\max}}^\beta$.

By {\it Lemma \ref{lem:feasibility}}, this minimizer $\xo$ cannot be strictly infeasible, as converging to an infeasible point forces $p_k \to \infty$, breaking the established bound. Therefore, $M_\beta(\xo) = 0$. Since $\xo$ is a global minimizer of the exact penalty function and is strictly feasible, it coincides exactly with a global minimizer of the original constrained problem $\C$.
\end{proof}

\subsection{Finite penalty parameter over a compact set}

Now we present the exact penalty mirror descent algorithm ({\it Algorithm \ref{alg:alg2}}) with compact set constraints. The algorithm differs in two important things. Firstly, the mirror map uses projection. Secondly, the test function tests the primal difference in trajectory, i.e., reduced gradient, to establish convergence.

To accommodate the strict domain boundaries of $\X$, the mirror map must intrinsically handle these projections. Within the Fenchel coupling framework, the mirror map $\Mirror_h: \R^n \to \X$ generated by a strictly convex regularizer $h(x)$ is defined via the convex conjugate as $\Mirror_h(y) = \der{h^*}{y}{} = \arg\max_{z \in \X} \left\{ \mul{y}{z} - h(z) \right\}$. 

Notice that when employing the standard Euclidean regularizer $h(x) = \frac{1}{2}\norm{x}_2^2$, the mirror map evaluation equates to the Euclidean projection onto the compact set $\X$. For an arbitrary dual update step $Y_{k+1} = Y_k - \gamma_k G_k$, the primal mapping resolves to (see (\ref{SMD_step1})--(\ref{SMD_step2})):
\begin{align}
    X_{k+1} = \Mirror_h(Y_{k+1}) &= \arg\min_{z \in \X} \left\{ \frac{1}{2}\norm{z}_2^2 - \mul{Y_{k+1}}{z} \right\} \nonumber \\
    &= \arg\min_{z \in \X} \left\{ \frac{1}{2} \norm{Y_{k+1} - z}_2^2 \right\} = \mathcal{P}_\X [Y_{k+1}], \label{eq:mirror_projection}
\end{align}
where $\mathcal{P}_\X$ denotes the Euclidean projection operator. This demonstrates a critical feature of the proposed method: the explicit projection operator is encapsulated entirely within the mirror map evaluation. Consequently, the updated primal point $X_{k+1}$ naturally satisfies the standard first-order optimality condition over the closed convex set $\X$:
\begin{equation} \label{eq:prox_optimality}
    \mul{\der{h}{X_{k+1}}{} - Y_{k+1}}{z - X_{k+1}} \ge 0, \quad \forall z \in \X.
\end{equation}

To understand why the test must change, recall the penalty update logic of the unconstrained method ({\it Algorithm \ref{alg:smdp}}). The algorithm evaluated the continuous directional derivative, effectively measuring the squared norm of the penalized subgradient: $\norm{G_k}^2 = \norm{\der{f}{X_k}{} + p g_\beta(X_k)}^2$. Because the exact penalty function is differentiable anywhere strictly outside the feasible set, a weak penalty parameter $p$ would cause the algorithm to converge to a false, infeasible minimum where the gradient naturally vanishes ($\norm{G_k}^2 \to 0$). The algorithm detected this vanishing gradient while $M_\beta(X_k) > 0$ and correctly increased $p$.

However, when optimization is restricted to a compact set $\X$, this subgradient-based logic fails. Suppose the penalty parameter is too weak, and the trajectory stalls against the boundary of the compact set $\X$ rather than finding the true feasible region. At this boundary, the negative descent direction $-(g_f + p g_\beta)$ points outside the set and is entirely absorbed by the normal cone $\mathcal{N}_\X(X_k)$. 

This creates a problem: the sequence of iterates stalls at the boundary, but the subgradient $G_k$ does not approach zero.
Instead, it approaches some non-zero vector $\nu \in \mathcal{N}_\X(X_k)$. Because the subgradient norm $\norm{G_k}^2$ remains strictly positive, the old test condition fails to recognize the stall. The penalty parameter is never updated, and the algorithm remains permanently trapped at an infeasible boundary point.

To restore convergence, the penalty test must measure the actual, physical movement of the algorithm. This is achieved by testing the primal difference in the trajectory---the reduced gradient---which inherently accounts for the geometry of the projection.

\subsubsection{The Primal Reduced Gradient and Design Choices}

To address the boundary stalling problem, the penalty update condition is formulated using the primal displacement rather than the subgradient. During a penalty update check at iteration $k$, we evaluate a trial step from the current state. The trial point is defined explicitly as a function of the dual variable $Y$ and the penalty parameter $p$, keeping the current gradients and step size $\gamma_k$ fixed:
\begin{equation} \label{eq:trial_step}
    X^+_k(Y, p) = \Mirror_h \Big( Y - \gamma_k ( \der{f}{X_k}{} + p g_\beta(X_k) ) \Big).
\end{equation}

Instead of testing the norm of the subgradient, the algorithm evaluates the primal reduced gradient of this trial step, denoted as $R_k(Y, p)$:
\begin{equation}
    R_k(Y, p) = \frac{1}{\gamma_k} \left( X_k - X^+_k(Y, p) \right).
\end{equation}
The use of $\norm{R_k(Y, p)}^2$ introduces geometric and computational tradeoffs that dictate the structure of the proposed algorithm.

An alternative to $R_k(Y, p)$ is the dual reduced gradient, defined as $\tilde{R}_k = (\der{h}{X_k}{} - \der{h}{X^+_k(Y, p)}{}) / \gamma_k$. While the dual reduced gradient offers robust theoretical bounding properties tied directly to the optimality conditions (\ref{eq:prox_optimality}), the primal reduced gradient is chosen for computational efficiency. The proposed algorithm operates within a lazy mirror descent framework, tracking the dual variable $Y_k$. Evaluating the dual reduced gradient requires explicitly computing the forward gradient $\der{h}{X^+_k(Y, p)}{}$ at every trial step, adding unnecessary computational overhead. By contrast, the primal difference $X_k - X^+_k(Y, p)$ is a direct byproduct of the mirror map evaluation.

To guarantee that bounding the dual sequence naturally bounds the primal sequence $R_k(Y, p)$, the regularizer $h(x)$ must be $L$-smooth on the compact set $\X$. This requirement highlights a fundamental dichotomy in handling constraints within mirror descent. In the broader landscape of constrained optimization, constraints are typically enforced via one of two paradigms: reparameterization or explicit projection. The reparameterization approach utilizes distance-generating functions that act as barriers---such as the logarithmic barrier for positive orthants or the entropy regularizer over a simplex. These barrier functions grow to infinity as the trajectory approaches the boundary, effectively confining the sequence to the interior of the domain and eliminating the need for an explicit projection step. However, by their mathematical nature, such barrier functions are inherently not $L$-smooth. 

Conversely, the proposed algorithm operates on the explicit projection paradigm. As established in (\ref{eq:mirror_projection}), by projecting the dual update back onto the simple constraints $\X$ via the mirror map, the algorithm retains the critical ability to reach the boundary---and exactly satisfy the constraints---in a finite number of steps. Because the boundary is managed directly by the projection operator rather than the barrier geometry of the regularizer, the distance-generating function does not need to exhibit barrier-like growth. Thus, selecting an $L$-smooth regularizer naturally aligns with the projection-based framework, allowing strict metric equivalence between the primal and dual spaces without generating theoretical conflicts.

The primal reduced gradient introduces a distinct edge case when operating with large step sizes. Consider an iteration where the current point $X_k$ is strictly infeasible. If a large step size $\gamma_k$ is applied, the unprojected dual update may point far outside the set $\X$, causing the mirror map to project the trial point $X^+_k(Y, p)$ onto a distant boundary of $\X$. While the physical displacement $\norm{X_k - X^+_k(Y, p)}$ is inherently bounded by the finite diameter of the compact set $\X$, the reduced gradient $R_k(Y, p)$ is inversely scaled by $\gamma_k$. Consequently, an oversized step size can force $\norm{R_k(Y, p)}^2$ to evaluate to an arbitrarily small scalar. 

This creates a scenario where the test condition $\norm{R_k(Y, p)}^2 < M_\beta(X_k) / p$ is improperly triggered, initiating a penalty increase even if the current penalty parameter is theoretically sufficient. However, this geometric artifact is algorithmically safe and does not compromise convergence. If the penalty parameter $p$ is updated inside the loop, the trial point $X^+_k(Y, p)$ may remain pinned against the boundary of $\X$, which is bounded, meaning the reduced gradient norm $\norm{R_k(Y, p)}$ is also bounded. Crucially, as $p$ is multiplied by $\kappa$, the right-hand side of the test condition, $M_\beta(X_k) / p$, strictly monotonically decreases toward zero. Therefore, the inequality will inevitably fail after a finite number of iterations. 
The algorithm terminates the loop after a finite number of parameter updates and resumes standard optimization.

\subsubsection{Algorithm}

With the primal reduced gradient defined and the regularizer assumptions established, we present the unified Exact Penalty Mirror Descent algorithm for compact sets ({\it Algorithm \ref{alg:alg2}}). 

The algorithm is structured to default to computationally efficient lazy mirror descent. At each iteration, the default trial step $X^+_k(Y_k, p)$ is evaluated using the accumulated dual variable $Y_k$. The algorithm then checks if the current state is strictly infeasible ($M_\beta(X_k) > 0$) and if the resulting primal reduced gradient is insufficient to guarantee descent ($\norm{R_k(Y_k, p)}^2 < M_\beta(X_k) / p$). 

If both conditions are met, the algorithm intercepts the step. Because the failure might be caused by an excessively large accumulated dual variable rather than an insufficient penalty parameter, the algorithm performs a dual reset---overwriting the dual variable with the exact inverse mirror map $\der{h}{X_k}{}$. It then enters a validation loop. Inside this loop, the step is re-evaluated from the anchored dual state. If the condition is satisfied, the loop terminates immediately without altering the penalty. Only if the descent condition fails again does the algorithm scale the penalty parameter by $\kappa$ and repeat the evaluation. Once the descent condition is met, the active dual variable is finalized, and the accepted trial step is directly assigned to $X_{k+1}$.

\begin{algorithm}[h]
\caption{(Exact Penalty Mirror Descent over a Compact Set)}
\label{alg:alg2}
\begin{algorithmic}[1]
\Require Initial dual point $Y_0 \in \R^n$, $\kappa > 1$, $\beta \in (1,+\infty)$, initial penalty $p_0 > 0$, and step sizes $\{\gamma_k\}$.
\State $X_0 = \Mirror_h(Y_0)$
\For {$k = 0, 1, 2, \dots$}
    \State $g_f = \der{f}{X_k}{}$, $g_\beta = g_\beta(X_k)$
    \State $p = p_k$
    \State $Y = Y_k$
    
    \State $X^+ = X^+_k(Y, p)$
    
    \If {$\norm{X_k - X^+}^2 / \gamma_k^2 < M_\beta(X_k) / p$}
        \State $Y = \der{h}{X_k}{}$
        \Loop
            \State $X^+ = X^+_k(Y, p)$
            \If {$\norm{X_k - X^+}^2 / \gamma_k^2 \ge M_\beta(X_k) / p$}
                \State \textbf{break}
            \EndIf
            \State $p = \kappa p$
        \EndLoop
    \EndIf
    
    \State $p_{k+1} = p$
    \State $Y_{k+1} = Y - \gamma_k (g_f + p_{k+1} g_\beta)$
    \State $X_{k+1} = X^+$
\EndFor
\end{algorithmic}
\end{algorithm}

\subsubsection{Theoretical Guarantees}

In order to prove convergence, the Constraint Qualification is adapted to include the simple set constraints. 

\begin{definition}[Constraint Qualification --- ${\bf (CQ_{VC} X)}$]\label{def:cqcvcompact}
Let $\X \subset \R^n$ be a compact set. There exist $v \in \X$, $K_1 > 0$, and $K_2 > 0$ such that for all $x \in \X$, the constraint directional derivatives along the direction $d = v - x$ satisfy:
\begin{align}
    & \sgn(h_i(x)) \der{h_i}{x}{d} \leq -K_1 \frac{|h_i(x)|}{M_\infty(x)}, \quad \forall i\in E, \label{CQVC1p} \\
    & \der{g_j}{x}{d} \leq -K_2, \quad \forall j\in I_+(x). \label{CQVC2p}
\end{align}
where $I_{+}(x) = \{j\in I \mid g_j(x) > 0\}$.
\end{definition}

We first establish that under ${\bf (CQ_{VC} X)}$, the exact penalty subgradient is uniformly bounded away from the descent point $v$.

\begin{lemma}[Subgradient Descent Bound] \label{lem:subgrad_bound}
Let $\X \subset \R^n$ be a compact set and assume ${\bf (CQ_{VC} X)}$ holds. Let $f$, $g_j$, and $h_i$ be continuously differentiable. For any strictly infeasible point $x \in \X$ (where $M_\beta(x) > 0$) and penalty parameter $p > 0$, the penalized subgradient $G(x) = \der{f}{x}{} + p g_\beta(x)$ satisfies: 
\begin{equation}
    \mul{G(x)}{v - x} \le \delta - p K_3 \nu
\end{equation}
where $\delta, K_3, \nu > 0$ are finite constants independent of $p$.
\end{lemma}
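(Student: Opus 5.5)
The plan is to split $G(x)=\der{f}{x}{}+p\,g_\beta(x)$ and bound the two pairings with $d=v-x$ separately, following the argument of Lemma \ref{boundedPenalty}. The only difference is that the boundedness of the iterates is now supplied by compactness of $\X$, together with the fact that $v\in\X$.

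\textbf{Objective term.} Since $f$ is continuously differentiable and $\X$ is compact, $\norm{\der{f}{x}{}}_\star$ is bounded on $\X$ by some $L_f$. Since $v,x\in\X$, we have $\norm{v-x}\le \mathrm{diam}(\X)$. Hence
\[
\mul{\der{f}{x}{}}{v-x}\le \delta:=L_f\,\mathrm{diam}(\X)+1,
\]
and this $\delta$ is independent of $p$ and $x$.

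\textbf{Penalty term.} At a strictly infeasible $x$, write out
\[
\mul{g_\beta(x)}{d}=\sum_{i\in E}\sigma_i(x,\beta)\mul{\der{h_i}{x}{}}{d}+\sum_{j\in I_+(x)}\eta_j(x,\beta)\mul{\der{g_j}{x}{}}{d}.
\]
Here $\eta_j=0$ for $j\notin I_+(x)$. For the equality part, $\sigma_i\mul{\der{h_i}{x}{}}{d}=|\sigma_i|\,\sgn(h_i(x))\mul{\der{h_i}{x}{}}{d}$; terms with $h_i(x)=0$ vanish because then $\sigma_i=0$. Applying (\ref{CQVC1p})--(\ref{CQVC2p}) gives
\[
\mul{g_\beta(x)}{d}\le -K_1\sum_{i\in E}|\sigma_i|\frac{|h_i(x)|}{M_\infty(x)}-K_2\sum_{j\in I_+(x)}\eta_j .
\]

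\textbf{Lower bound on the coefficients.} This is the step that needs care. Set $N=\norm{g_+(x),h(x)}_\beta$ and let $w$ be the vector of the components $|h_i(x)|$ and $(g_j(x))_+$. Then $|\sigma_i|=N^{1-\beta}w_i^{\beta-1}$ and $\eta_j=N^{1-\beta}w_j^{\beta-1}$. For the equality terms I use $w_i/M_\infty(x)\le 1$ in the opposite direction: I write $w_i^{\beta-1}\ge w_i^{\beta}/M_\infty$ and treat both families uniformly. Indeed $\eta_j=N^{1-\beta}w_j^{\beta-1}\ge N^{1-\beta}w_j^{\beta}/M_\infty(x)$, since $w_j\le M_\infty$. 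With $K_3=\min(K_1,K_2)$ this yields
\[
\mul{g_\beta(x)}{d}\le -K_3\,\frac{N^{1-\beta}\sum_k w_k^\beta}{M_\infty(x)}=-K_3\frac{N}{M_\infty(x)}.
\]
By equivalence of the finite-dimensional $\beta$- and $\infty$-norms, $N\ge M_\infty(x)$, so the choice $\nu=1$ works. (More generally, any constant $c_\beta>0$ with $\norm{\cdot}_\beta\ge c_\beta\norm{\cdot}_\infty$ will do.) The key feature is that the bound is scale-invariant in the size of the violation, so it survives as $M_\beta(x)\to 0^+$ with no degeneration. This is why the homogeneity of degree zero of $\sigma_i,\eta_j$ must be exploited rather than any continuity argument.

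\textbf{Conclusion.} Combining the two bounds gives
\[
\mul{G(x)}{v-x}\le \delta-pK_3\nu .
\]
The constants depend only on $f$, $\X$, $K_1$, $K_2$ and $\beta$, and not on $p$ or $x$.
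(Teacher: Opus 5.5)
Your proof is correct and follows the paper's route: split $G(x)$, bound $\langle \nabla f(x), v-x\rangle$ using compactness of $\X$, substitute ${\bf (CQ_{VC} X)}$ term by term into the expansion of $g_\beta(x)$, and finish with norm equivalence. The paper leaves $\nu$ implicit. Your computation shows the coefficient sum is at least $\|g_+(x),h(x)\|_\beta / M_\infty(x) \ge 1$, which makes it explicit ($\nu=1$) and shows why the bound does not degenerate as $M_\beta(x)\to 0^+$.
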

\begin{proof}
By substituting the bounds from ${\bf (CQ_{VC} X)}$ into the exact penalty subgradient expansion, the constraint violation gradient $g_\beta(x)$ guarantees a strict descent bound $\mul{g_\beta(x)}{v - x} \le -K_3 \nu$, where $K_3 = \min(K_1, K_2)$ and $\nu > 0$ is a constant derived from finite-dimensional norm equivalence. Because $\X$ is compact and $f$ is continuously differentiable, $\der{f}{x}{v - x}$ is strictly bounded from above by a finite constant $\delta$. The result follows immediately from the linearity of the inner product.
\end{proof}

We now prove that the exact penalty algorithm cannot get trapped in an infinite parameter update loop.

\begin{lemma}[Finiteness of the Penalty Parameter]\label{lem:boundedPenalty_MD}
Suppose the assumptions of Lemma \ref{lem:subgrad_bound} hold. Assume the regularizer $h(x)$ is $L$-smooth on the compact set $\X$. Then the sequence of penalty parameters $\{p_k\}$ generated by Algorithm \ref{alg:alg2} is strictly bounded.
\end{lemma}
\begin{proof}
Consider an iteration $k$ where the penalty parameter $p$ is increased inside the evaluation loop. The trial step $X^+ = X^+_k(\der{h}{X_k}{}, p)$ is evaluated from the reset dual state. By the definition of the mirror map, $X^+$ is the exact minimizer of the proximal projection, satisfying the variational inequality for all $z \in \X$ (Appendix \ref{app:variational_inequality}):
\begin{equation}
    \mul{\gamma_k G_k + \der{h}{X^+}{} - \der{h}{X_k}{}}{z - X^+} \ge 0.
\end{equation}
Substitute the global reference point $z = v \in \X$. By algebraically splitting the vector $v - X^+ = (v - X_k) + (X_k - X^+)$ and noting that $X_k - X^+ = \gamma_k R_k$, we obtain:
\begin{equation}
    \gamma_k \mul{G_k}{v - X_k} + \gamma_k^2 \mul{G_k}{R_k} + \mul{\der{h}{X^+}{} - \der{h}{X_k}{}}{v - X^+} \ge 0.
\end{equation}
Dividing by $\gamma_k$ and isolating the ${\bf (CQ_{VC} X)}$ term yields:
\begin{equation} \label{eq:vi_bound}
    -\mul{G_k}{v - X_k} \le \gamma_k \mul{G_k}{R_k} + \frac{1}{\gamma_k} \mul{\der{h}{X^+}{} - \der{h}{X_k}{}}{v - X^+}.
\end{equation}

We bound the terms on the right. By Cauchy-Schwarz, $\gamma_k \mul{G_k}{R_k} \le \gamma_k \norm{G_k} \norm{R_k}$. Because $\X$ is bounded and the problem functions are continuously differentiable, there exist constants $C_f, C_g > 0$ such that $\norm{G_k} \le C_f + p C_g$. Because $h(x)$ is $L$-smooth on $\X$, the dual norm is bounded: $\norm{\der{h}{X_k}{} - \der{h}{X^+}{}}_* \le L \norm{X_k - X^+} = L \gamma_k \norm{R_k}$. With the diameter of $\X$ bounded by $D$, the second term is strictly bounded by $L D \norm{R_k}$. 

Substituting these upper bounds and the lower bound from {\it Lemma \ref{lem:subgrad_bound}} into (\ref{eq:vi_bound}) yields:
\begin{equation}
    p K_3 \nu - \delta \le \Big( \gamma_k (C_f + p C_g) + L D \Big) \norm{R_k}.
\end{equation}
Because the penalty was increased, the test condition dictates $\norm{R_k} < \sqrt{M_\beta(X_k) / p}$. The constraint violation is bounded above by $M_{\max}$ over the compact set. Substituting $\norm{R_k} < \sqrt{M_{\max}} / \sqrt{p}$ yields:
\begin{equation}
    p K_3 \nu - \delta < \Big( \gamma_k C_f + \gamma_k p C_g + L D \Big) \frac{\sqrt{M_{\max}}}{\sqrt{p}}.
\end{equation}
Multiplying both sides by $\sqrt{p}$ isolates the dependency on the penalty parameter:
\begin{equation}
    (K_3 \nu) p^{3/2} - (\gamma_k C_g \sqrt{M_{\max}}) p - \delta p^{1/2} < \sqrt{M_{\max}} (\gamma_k C_f + L D).
\end{equation}
The right side of this inequality is a finite constant. However, as $p$ grows, the strictly positive $p^{3/2}$ term on the left side fundamentally dominates the lower-order terms. Therefore, for sufficiently large $p$, this inequality is mathematically impossible. The algorithm cannot satisfy the test condition infinitely, and the sequence $\{p_k\}$ is strictly finite.
\end{proof}

\begin{lemma}[Vanishing Reduced Gradient]\label{lem:vanishing_gradient}
Let $f$ be continuously differentiable over a compact set $\X$. Let $X_k = \Mirror_h(Y_k)$ be the sequence generated by lazy mirror descent:
\begin{equation}
    Y_{k+1} = Y_k - \gamma_k \der{f}{X_k}{},
\end{equation}
If $X_k \to \xo$, the primal reduced gradient $R_k = \frac{1}{\gamma_k}(X_k - X_{k+1})$ converges to zero.
\end{lemma}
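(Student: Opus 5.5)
The plan is to split the reduced gradient into a part that vanishes by continuity of $\der{f}{}{}$ and a part generated by the fixed vector $\der{f}{\xo}{}$, which must be controlled through the geometry of the mirror map at $\xo$. The basic tool is the Lipschitz property of the mirror map. Since $h$ is $K$--strongly convex, $h^*$ is $\frac1K$--smooth, so $\Mirror_h=\der{h^*}{}{}$ satisfies $\norm{\Mirror_h(y)-\Mirror_h(y')}\le \frac1K\norm{y-y'}_*$ for all $y,y'\in\Y$. Applied to $y=Y_k$ and $y'=Y_{k+1}=Y_k-\gamma_k\der{f}{X_k}{}$, this gives the crude bound $\norm{R_k}\le \frac1K\norm{\der{f}{X_k}{}}_*$. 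The bound is uniform in $k$ but not yet small, so the argument must refine it.

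Write $\der{f}{X_k}{}=\der{f}{\xo}{}+e_k$ with $e_k=\der{f}{X_k}{}-\der{f}{\xo}{}$. Because $X_k\to\xo$ and $\der{f}{}{}$ is continuous on the compact set $\X$, we have $\norm{e_k}_*\to0$. Introduce the auxiliary point $\tilde Y_{k+1}=Y_k-\gamma_k\der{f}{\xo}{}$. The Lipschitz bound then yields
\begin{equation*}
\norm{R_k}\le \frac{1}{\gamma_k}\norm{\Mirror_h(Y_k)-\Mirror_h(\tilde Y_{k+1})}+\frac1K\norm{e_k}_* .
\end{equation*}
The second term tends to zero, so everything reduces to showing $\frac1{\gamma_k}\norm{\Mirror_h(Y_k)-\Mirror_h(Y_k-\gamma_k \der{f}{\xo}{})}\to0$. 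This is a translation of the dual variable by the fixed vector $\gamma_k\der{f}{\xo}{}$.

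If $\der{f}{\xo}{}=0$ (in particular, if $\xo$ is interior and stationary), the first term is identically zero and the claim follows at once. Otherwise I would use that the limit $\xo$ is stationary for the constrained problem, which is the setting in which the lemma is used (cf.\ Lemma \ref{lem:feasibility}), so that $-\der{f}{\xo}{}\in\mathcal N_\X(\xo)$. The idea is that translating a dual point in a normal-cone direction at (or near) its image does not move the image. By the optimality condition (\ref{eq:prox_optimality}), $Y-\der{h}{\Mirror_h(Y)}{}\in\mathcal N_\X(\Mirror_h(Y))$. Hence if $-\der{f}{\xo}{}\in\mathcal N_\X(X_k)$, then $\Mirror_h(Y_k-\gamma_k\der{f}{\xo}{})=X_k$ exactly.

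The main obstacle is that $\mathcal N_\X(X_k)$ need not contain $-\der{f}{\xo}{}$ when $X_k\ne\xo$, since the normal cone is only outer semicontinuous. I would handle this by splitting $-\der{f}{\xo}{}=\nu_k+r_k$, where $\nu_k$ is the projection of $-\der{f}{\xo}{}$ onto $\mathcal N_\X(X_k)$, and showing $r_k\to0$ along the sequence. With $r_k\to0$, the translation by $\gamma_k\nu_k$ leaves the image at $X_k$ and the residual contributes at most $\frac1K\norm{r_k}_*\to0$. For polyhedral $\X$, or whenever the active face at $\xo$ is eventually identified by $X_k$, this step is straightforward. In general I expect it to require exploiting the lazy-update structure: $Y_k$ itself accumulates $-\sum\gamma_j\der{f}{X_j}{}$, so the normal component of $Y_k$ at $X_k$ grows in the direction of $-\der{f}{\xo}{}$, and this forces the active geometry at $X_k$ to align with that direction. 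This alignment step is the one I would check most carefully, and it may need an additional regularity assumption on $\X$.
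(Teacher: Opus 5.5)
\textbf{Gap: the alignment step.} Your reduction is correct as far as it goes. The $\frac1K$-Lipschitz continuity of $\Mirror_h$, the splitting $\nabla f(X_k)=\nabla f(\xo)+e_k$, and the invariance $\Mirror_h(Y_k+\gamma_k\nu)=X_k$ for $\nu\in\mathcal{N}_\X(X_k)$ together reduce the lemma to showing $\|r_k\|_*\to 0$, i.e.\ that $-\nabla f(\xo)$ becomes asymptotically normal to $\X$ at $X_k$. But this alignment step is the whole content of the lemma, and you leave it open, suggesting it may need polyhedrality or extra regularity of $\X$. It needs neither. What it needs is divergence of the step sizes, $\tau_k:=\sum_{i<k}\gamma_i\to\infty$, which your argument never uses; the paper's proof uses it implicitly via the Robbins--Monro condition. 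Without it, the step and the lemma itself fail even in the most benign setting. Take $\X=[0,1]$, $h(x)=x^2/2$ (so $\Mirror_h$ is the projection and $K=1$), $f(x)=x$, and positive steps with $\sum_i\gamma_i<1$ and $Y_0=\sum_i\gamma_i$. Then $X_k=Y_k=\sum_{i\ge k}\gamma_i\to 0$. The limit is stationary ($-1\in\mathcal{N}_\X(0)$) and $\X$ is polyhedral, yet $\mathcal{N}_\X(X_k)=\{0\}$ for every $k$ and $R_k\equiv 1$. So neither the stationarity you import from the surrounding theorems nor any regularity of $\X$ can close the gap. The polyhedral case is also not automatically ``straightforward'', since the active face need not be identified.

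\textbf{How to close it.} The missing argument is short, formalizes your own intuition about the lazy update, and is the same averaging fact the paper's proof rests on. By the optimality condition of the mirror map, $n_k:=Y_k-\nabla h(X_k)\in\mathcal{N}_\X(X_k)$, hence $n_k/\tau_k\in\mathcal{N}_\X(X_k)$ because the normal cone is a cone. Write $Y_k=Y_0-A_k$ with $A_k=\sum_{i<k}\gamma_i\nabla f(X_i)$. The Toeplitz lemma gives $A_k/\tau_k\to\nabla f(\xo)$, since $\tau_k\to\infty$ and $\nabla f(X_k)\to\nabla f(\xo)$. Also $(Y_0-\nabla h(X_k))/\tau_k\to 0$, since $\nabla h$ is bounded on $\X$. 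Thus $\|r_k\|_*\le\|n_k/\tau_k+\nabla f(\xo)\|_*\to 0$, which finishes your proof. Passing to the limit in $n_k/\tau_k\in\mathcal{N}_\X(X_k)$ also yields $-\nabla f(\xo)\in\mathcal{N}_\X(\xo)$, so stationarity need not be assumed.

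\textbf{Comparison with the paper.} The paper uses the same fact but packages it differently. It combines $K\|R_k\|^2\le\langle\nabla f(X_k),R_k\rangle$ (optimality at two consecutive iterates plus strong convexity) with $\langle n_k,R_k\rangle\ge 0$ divided by $\tau_k$. This gives $K\|R_k\|^2\le\langle\nabla f(X_k)+n_k/\tau_k,R_k\rangle\to 0$, using boundedness of $R_k$. Once the gap is filled, your Lipschitz/normal-cone route is a valid alternative that makes the geometric mechanism more transparent.
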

\begin{proof}
Expanding the dual variable from $k=0$:
\begin{equation}
    Y_k = Y_0 - A_k,
\end{equation}
where $A_k = \sum_{i=0}^{k-1} \gamma_i \der{f}{X_i}{}$ is the accumulated gradient drift and $\tau_k = \sum_{i=0}^{k-1} \gamma_i$ is the total step size. 

Because $f$ is continuously differentiable and $X_k \to \xo$, the gradient sequence converges: $\lim_{k \to \infty} \der{f}{X_k}{} = \der{f}{\xo}{}$. Since $\tau_k \to \infty$, the Toeplitz Lemma (generalized Ces\`aro mean) guarantees that the weighted average of the gradients converges exactly to the same limit:
\begin{equation}
    \lim_{k \to \infty} \frac{A_k}{\tau_k} = \der{f}{\xo}{}.
\end{equation}

By the first-order optimality conditions of the mirror map for $X_k$ and $X_{k+1}$, and the $K$-strong convexity of $h$, we obtain:
\begin{equation}
    \mul{\der{h}{X_k}{} - \der{h}{X_{k+1}}{}}{X_k - X_{k+1}} \le \gamma_k \mul{\der{f}{X_k}{}}{X_k - X_{k+1}}.
\end{equation}
Applying strong convexity to the left side yields $K \norm{X_k - X_{k+1}}^2 \le \gamma_k \mul{\der{f}{X_k}{}}{X_k - X_{k+1}}$. Dividing by $\gamma_k^2$ isolates the reduced gradient $R_k$:
\begin{equation} \label{eq:gn_bound_diff}
    K \norm{R_k}^2 \le \mul{\der{f}{X_k}{}}{R_k}.
\end{equation}

By the Cauchy-Schwarz inequality, the bound implies $K \norm{R_k}^2 \le \norm{\der{f}{X_k}{}} \norm{R_k}$, which reduces to $\norm{R_k} \le \frac{1}{K} \norm{\der{f}{X_k}{}}$. Because $f$ is continuously differentiable over the compact set $\X$, its gradient is globally bounded by some constant $L > 0$ for all $x \in \X$. Therefore, the sequence of reduced gradients is strictly bounded:
\begin{equation} \label{eq:gn_sup_bound}
    \norm{R_k} \le \frac{L}{K} \quad \forall k.
\end{equation}
Note that this bound is independent of $k$ and step size.
The optimality condition for $X_k = \Mirror_h(Y_k)$ evaluated at $Z = X_{k+1} \in \X$ is:
\begin{equation}
    \mul{Y_k - \der{h}{X_k}{}}{X_{k+1} - X_k} \le 0 \implies 0 \le \mul{Y_k - \der{h}{X_k}{}}{R_k}.
\end{equation}
Substituting the dual expansion $Y_k = Y_0 - A_k$:
\begin{equation}
    0 \le \mul{Y_0 - A_k - \der{h}{X_k}{}}{R_k}.
\end{equation}
Dividing the entire inequality by $\tau_k$:
\begin{equation}
    0 \le \mul{\frac{Y_0 - A_k - \der{h}{X_k}{}}{\tau_k}}{R_k}.
\end{equation}
Adding $\mul{\der{f}{X_k}{}}{R_k}$ to both sides and grouping the right-hand terms yields:
\begin{equation}
    \mul{\der{f}{X_k}{}}{R_k} \le \mul{\der{f}{X_k}{} - \frac{A_k}{\tau_k} + \frac{Y_0 - \der{h}{X_k}{}}{\tau_k}}{R_k}.
\end{equation}
Chaining this with the bound from (\ref{eq:gn_bound_diff}) forms the final inequality:
\begin{equation}
    0 \le K \norm{R_k}^2 \le \mul{\der{f}{X_k}{}}{R_k} \le \mul{\der{f}{X_k}{} - \frac{A_k}{\tau_k} + \frac{Y_0 - \der{h}{X_k}{}}{\tau_k}}{R_k}.
\end{equation}
Taking the limit as $k \to \infty$: $\tau_k \to \infty$ and $\der{h}{X_k}{}$ is bounded over the compact set $\X$, so $\der{h}{X_k}{} / \tau_k \to 0$. We know $A_k / \tau_k \to \der{f}{\xo}{}$, and from spatial convergence, $\der{f}{X_k}{} \to \der{f}{\xo}{}$. Therefore, the grouped difference $\der{f}{X_k}{} - \frac{A_k}{\tau_k} \to \der{f}{\xo}{} - \der{f}{\xo}{} = 0$. 

Consequently, the entire vector on the right side of the inner product converges to zero. Because $\norm{R_k}$ is strictly bounded, the inner product must vanish:
\begin{equation}
    \lim_{k \to \infty} K \norm{R_k}^2 = 0 \implies \lim_{k \to \infty} \norm{R_k} = 0,
\end{equation}
which completes the proof. \end{proof}

With this behavior established, we can now prove the final global convergence theorem for the proposed algorithm over a compact set.

\begin{theorem}[Global Convergence]\label{thm:ProjectionPenaltyUpdate}
Suppose $f$ is continuously differentiable, assumption ${\bf (A3)}$ holds, the set $\X$ is compact, and ${\bf (CQ_{VC} X)}$ is satisfied. Assume the step size sequence $\{\gamma_k\}$ satisfies the Robbins-Monro conditions: $\sum_{k=0}^\infty \gamma_k = \infty$ and $\sum_{k=0}^\infty \gamma_k^2 < \infty$. If the exact penalty function $P^{\beta}_p$ satisfies variational coherence with respect to its optimal set, then there exists a subsequence of the sequence $\{X_k\}$ generated by Algorithm \ref{alg:alg2} which converges to the global solution of the constrained problem $\C$.
\end{theorem}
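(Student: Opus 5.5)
The proof combines three ingredients: the finiteness of the penalty parameter (Lemma \ref{lem:boundedPenalty_MD}), the recurrence argument of Proposition \ref{recurrence} together with Lemma \ref{lemmaBound}, and a feasibility argument built on Lemma \ref{lem:vanishing_gradient}.

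\textbf{Step 1: the penalty stabilizes.} By Lemma \ref{lem:boundedPenalty_MD}, the sequence $\{p_k\}$ is bounded. Since $p_k$ only changes by multiplication by $\kappa>1$, it is constant, equal to some $p_{\max}$, for all $k\ge k_0$. Each dual reset $Y=\der{h}{X_k}{}$ is triggered only when the test fails. A reset does not change $X_k$, because $\Mirror_h(\der{h}{X_k}{})=X_k$ on $\X$.

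\textbf{Step 2: recurrence near the optimal set.} Beyond $k_0$ the iteration is mirror descent on the fixed function $P^\beta_{p_{\max}}$, apart from occasional dual resets. I would redo the Fenchel-coupling estimate from the proof of Theorem \ref{globalConvergence} with $F_k=F(\xo,Y_k)$, $\xo\in\Xopt$:
\begin{equation*}
F_{k+1}\le F_k+\frac{\gamma_k^2}{2K}\|G_k\|_*^2-\gamma_k\braket{G_k}{X_k-\xo}.
\end{equation*}
Here $\|G_k\|_*$ is uniformly bounded, because $\X$ is compact and $p_{\max}$ is finite. If the iterates stayed outside $\B(\Xopt,\eps)$ from some index on, Lemma \ref{lemmaBound} would give $\braket{G_k}{\xo-X_k}\le -c_\eps$. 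Summing and using (A5) would then force $F_k\to-\infty$, which contradicts $F_k\ge 0$.

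Resets must be handled separately. A reset replaces $Y_k$ by $\der{h}{X_k}{}$, and this can only decrease the coupling, since $F(\xo,\der{h}{X_k}{})=D_h(\xo,X_k)\le F(\xo,Y_k)$ by the Bregman minimality of the mirror preimage. So the telescoping inequality survives. Consequently $X_k$ enters $\B(\Xopt,\eps)$ infinitely often for every $\eps>0$. By compactness of $\X$, we extract a subsequence $X_{k_j}$ converging to some $\xo^\ast$ with ${\rm dist}(\Xopt,\xo^\ast)=0$, so $\xo^\ast\in\Xopt$ because $\Xopt$ is closed.

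\textbf{Step 3: the limit is feasible.} I must show that $\xo^\ast$, a global minimizer of $P^\beta_{p_{\max}}$ over $\X$, satisfies $M_\beta(\xo^\ast)=0$. Suppose instead that $M_\beta(\xo^\ast)>0$. Then for large $j$ the test condition $\|R_{k_j}\|^2 < M_\beta(X_{k_j})/p_{\max}$ would have to fail, since otherwise $p$ would grow beyond $p_{\max}$. Hence $\|R_{k_j}\|^2\ge M_\beta(\xo^\ast)/(2p_{\max})>0$.

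On the other hand, $P^\beta_{p_{\max}}$ is $C^1$ near the infeasible point $\xo^\ast$, and $\xo^\ast$ minimizes it over $\X$. The argument of Lemma \ref{lem:vanishing_gradient}, applied to the trial step from the reset state, uses the variational inequality
\begin{equation*}
\mul{\gamma_k G_k+\der{h}{X^+}{}-\der{h}{X_k}{}}{z-X^+}\ge0,
\end{equation*}
together with the first-order condition $\mul{\der{P}{\xo^\ast}{}}{z-\xo^\ast}\ge0$ and strong convexity of $h$. These should show that the projected displacement, divided by $\gamma_k$, becomes small: $\|R_{k_j}\|\to0$. This contradicts the lower bound, so $\xo^\ast$ is feasible.

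Finally, a feasible global minimizer of $P^\beta_{p_{\max}}$ is a global minimizer of $\C$, because $P^\beta_{p_{\max}}=f$ on $\G$ and $\min_\X P^\beta_{p_{\max}}\le\min_{\G}f$.

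\textbf{Main obstacle.} Step 3 is the delicate point. Lemma \ref{lem:vanishing_gradient} assumes convergence of the whole sequence and an undisturbed lazy recursion, whereas here only a subsequence converges and resets occur. I would first try the reset-state variational inequality directly, since it avoids the Ces\`aro argument. The remaining difficulty is that $R_k$ is evaluated from the lazy state $Y_k$ rather than from $\der{h}{X_k}{}$, and closing that gap may require invoking the $L$-smoothness of $h$.
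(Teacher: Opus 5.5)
\textbf{Verdict: Step 3 has a genuine gap, and it is the essential step of the theorem.}

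Your Steps 1 and 2 are sound. In Step 2 you are more careful than the paper, which simply asserts that ``standard mirror descent'' runs after stabilization. A reset can indeed only lower the coupling, since $F(\xo,Y)=D_h(\xo,X_k)-\langle Y-\nabla h(X_k),\xo-X_k\rangle\ge D_h(\xo,X_k)$ whenever $\Mirror_h(Y)=X_k$.

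The problem in Step 3 is that the tools you list are pointwise. Combine the reset-state variational inequality at $z=\xo^\ast$ with $\langle\nabla P(\xo^\ast),X^+-\xo^\ast\rangle\ge0$ and with strong convexity and $L$-smoothness of $h$. This gives only
\[
K\|R\|^2\le L\|R\|\,\frac{\|X_{k_j}-\xo^\ast\|}{\gamma_{k_j}}+\|G_{k_j}-\nabla P(\xo^\ast)\|_*\,\frac{\|X^+-\xo^\ast\|}{\gamma_{k_j}},
\]
which tends to zero only if $X_{k_j}\to\xo^\ast$ faster than $\gamma_{k_j}\to0$. A merely convergent subsequence gives no such rate.

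These tools do settle the interior case. If $\xo^\ast\in{\rm int}\,\X$, then $\nabla P(\xo^\ast)=0$. Both the lazy state $Y_k$ and the reset state $\nabla h(X_k)$ are mapped by $\Mirror_h$ to $X_k$, so the $1/K$-Lipschitz continuity of $\Mirror_h$ gives $\|R\|\le\|G_{k_j}\|_*/K\to0$ for either state. So the lazy-versus-reset distinction you single out is not the real obstacle.

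The boundary case is the obstacle. Take $\X=[0,1]$, $h=\tfrac12x^2$, $\nabla P\equiv c>0$ near an infeasible minimizer $\xo^\ast=0$, and $X_{k_j}=\epsilon_j\ge c\gamma_{k_j}$. Both dual states equal $\epsilon_j$ and give $R=\min(\epsilon_j/\gamma_{k_j},c)=c$ for every $j$, even though all the conditions you invoke hold.

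A smaller correction: a triggered lazy test only causes a reset, not a penalty increase. The correct statement is that once $p_k=p_{\max}$, every accepted step satisfies $\|X_k-X_{k+1}\|^2/\gamma_k^2\ge M_\beta(X_k)/p_{\max}$. That is the lower bound you actually need.

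The missing ingredient must use the whole trajectory, not just the local geometry at $\xo^\ast$. The paper supplies it by taking convergence of the full sequence $X_k\to\xo$ and invoking Lemma~\ref{lem:vanishing_gradient}. There the lazy dual variable $Y_k=Y_0-A_k$ satisfies $A_k/\tau_k\to\nabla P(\xo)$ by the Toeplitz lemma. This accumulated normal-cone component is what forces $\|R_k\|\to0$ even at points of $\partial\X$.

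To follow that route you would need two things. First, convergence of the entire sequence, whereas you only have a subsequence. Second, control of resets after stabilization, since a reset destroys $Y_k=Y_0-A_k$; the paper itself passes over this point. Your proposal supplies neither ingredient nor a substitute. One possible substitute would be a sufficient-decrease estimate for $P$, summed along the trajectory against $\gamma_k\|R_k\|^2\ge\gamma_kM_\beta(X_k)/p_{\max}$. Without something of this kind, feasibility of $\xo^\ast$ is not established.
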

\begin{proof}
By {\it Lemma \ref{lem:boundedPenalty_MD}}, the sequence of penalty parameters $\{p_k\}$ is strictly bounded from above. Because the algorithm only updates the penalty via multiplication by a constant $\kappa > 1$, the parameter must stabilize after a finite number of iterations at a fixed value $p_{\max}$. 

For all subsequent iterations, {\it Algorithm \ref{alg:alg2}} executes standard mirror descent on the fixed, Lipschitz-continuous objective function $\Phi(x) = f(x) + p_{\max} M_\beta(x)$. Under the condition that $\Phi(x)$ is variationally coherent, the framework established for non-convex mirror descent guarantees convergence to a stationary point $\xo$ that minimizes $\Phi(x)$ globally.

We must verify that this limit point $\xo$ is feasible. Assume for the sake of contradiction that $p_{\max}$ is too low, causing the sequence to converge to an infeasible point ($M_\beta(\xo) > 0$). By {\it Lemma \ref{lem:vanishing_gradient}}, because the sequence converges to an infeasible stationary point under a fixed penalty, the primal reduced gradient vanishes ($\norm{R_k} \to 0$).

We now evaluate the algorithm's penalty update condition as $k \to \infty$:
\begin{equation}
    \norm{R_k}^2 < \frac{M_\beta(X_k)}{p_{\max}}.
\end{equation}
As $X_k \to \xo$, the left side evaluates to $0$. Because we assumed $\xo$ is strictly infeasible, the constraint violation on the right side converges to a strictly positive constant: $M_\beta(\xo) / p_{\max} > 0$. 

Therefore, in a finite number of steps, the condition $0 < M_\beta(\xo) / p_{\max}$ will be strictly satisfied, triggering the update $p \to \kappa p$. This directly contradicts the established fact that $p_k$ stabilizes at $p_{\max}$. 

Thus, our assumption must be false: $p_{\max}$ cannot be too low, and the sequence cannot converge to an infeasible point. The limit point $\xo$ must be strictly feasible ($M_\beta(\xo) = 0$). Because $\xo$ is the global minimizer of the exact penalty function and lies entirely within the feasible region, the exactness properties of the penalty formulation guarantee that $\xo$ is the global solution of the original constrained problem $\C$, completing the proof.
\end{proof}

\subsection{On Possible Extension to Stochastic Settings}
While this paper establishes the adaptive penalty update scheme for exact penalty mirror descent in a deterministic setting, a natural extension is its application to stochastic optimization, where only noisy gradient estimates $G_k = \der{f}{X_k}{} + U_k$ are available.

The introduction of stochasticity adds significant complexity to this penalty update mechanism. In the presence of persistent variance, the true reduced gradient $\norm{R_k}$ will not converge to zero when the algorithm stalls at an infeasible boundary. Instead, it will perpetually oscillate, bounded by the noise level $\norm{U_k}$. Because the penalty update mechanism strictly evaluates if $\norm{R_k}^2 < M_\beta(X_k)/p$, raw stochastic noise can trigger false penalty updates, causing $p_k$ to diverge and destroying the convergence guarantees of the algorithm.

To resolve this without altering the fundamental penalty update logic, the algorithm must dynamically control the variance of the gradient estimator by adjusting the minibatch size. Specifically, one can construct an unbiased estimator for the squared norm of the true gradient by evaluating the inner product of independent stochastic samples $G_k^{(i)}$ and $G_k^{(j)}$ drawn at the same point. Because the noise is zero-mean and independent, their expected inner product perfectly isolates the true squared norm: $\mathbb{E}[\langle G_k^{(i)}, G_k^{(j)} \rangle] = \norm{\der{f}{X_k}{}}^2$. By computing these inner products over an increasing minibatch of independent observations, one can construct high-probability confidence intervals $[L_k, U_k]$ around the true deterministic gradient norm, under basic assumptions on the noise distribution.

The penalty test mechanism can then be adapted to act strictly on statistical certainty:
\begin{enumerate}
    \item If the upper bound falls below the threshold, $U_k < \frac{M_\beta(X_k)}{p}$, the algorithm is statistically certain that the true reduced gradient has vanished. The penalty parameter $p$ is increased.
    \item If the lower bound exceeds the threshold, $L_k \ge \frac{M_\beta(X_k)}{p}$, the algorithm is statistically certain the penalty is sufficient, and the standard mirror descent step is accepted.
    \item If the threshold lies strictly inside the confidence interval ($L_k < \frac{M_\beta(X_k)}{p} < U_k$), the test is inconclusive. The algorithm must draw more observations to shrink the interval $[L_k, U_k]$ until one of the decisive conditions is met.
\end{enumerate}

Importantly, dynamically controlling the gradient variance in this manner is not expected to significantly worsen the algorithm's overall performance. Near convergence, when the true gradient naturally vanishes, a standard stochastic estimator often becomes completely indistinguishable from zero due to the noise. In these regions, increasing the minibatch size actively maintains a reasonable signal-to-noise ratio, successfully revealing the true descent direction that stochasticity would otherwise hide.

By wrapping the penalty update in this dynamic sampling regime, the algorithm can effectively emulate the deterministic boundary detection while safely accommodating stochastic oracles. While implementing this framework requires simple yet important extensions to the standard stochastic methodology presented earlier, the formal convergence analysis of this dynamic-batching update scheme is deferred to future work.

\section{Numerical Results}

In order to validate the method and, most importantly, to highlight some practical details, we evaluate the method numerically on selected examples. Firstly, we show example convergence trajectories for four different 2D non-convex benchmark objectives using a constant penalty parameter. Next, we evaluate the method on the Rosenbrock's function. Finally, we analyze the adaptive penalty update strategy and apply the algorithm to a binary regression task.

\subsection{Convergence for Constant Penalty on Elementary Examples}

To highlight the trajectory behavior of the stochastic mirror descent algorithm, we first evaluate the method using a constant, preselected penalty parameter. Consider the following elementary objective functions:
\begin{align*}
    f_{\rm{a}}(x_1, x_2) = &\ x_1^2 x_2^2 \\
    f_{\rm{b}}(x_1, x_2)= &
    \left[ 1 + (x_1+x_2+1)^2 (19 - 14x_1 + 3x_1^2 - 14x_2 + 6x_1x_2 + 3x_2^2) \right]
    \\ \cdot &
    \left[ 30 + (2x_1-3x_2)^2 (18 - 32x_1 + 12x_1^2 + 48x_2 - 36x_1x_2 + 27x_2^2) \right] \\
    f_{\rm{c}}(x_1, x_2) = &\ 100 \sqrt{\abs{x_2 - 0.01x_1^2}} + 0.01\abs{x_1 + 10} \\
    f_{\rm{d}}(x_1, x_2) = & (1.5 - x_1 + x_1 x_2)^2 + (2.25 - x_1 + x_1x_2^2)^2 + (2.625 - x_1 + x_1 x_2^3)^2
\end{align*}
which we call the bivariate quadratic product (a), Goldstein-Price (b), Bukin No. 6 (c), and Beale's (d) functions, respectively. 

Each optimization problem is constrained by a convex bounding box $\X$ and an exact penalty term $p M(x)$. The specific constraint violation functions $M(x)$ applied to each benchmark are defined as follows:
\begin{eqnarray}
&{\displaystyle 
    M_{\rm{a}}(x) = |x_1 - x_2|, \  M_{\rm{b}}(x) = |x_2 + 0.5|,} \nonumber \\ 
&{\displaystyle
    M_{\rm{c}}(x) = \max(0, 0.3 - x_2) + \max(0, -x_1 - x_2 - 1) + }\nonumber \\
&{\displaystyle 
    \max(0, x_1 - x_2 - 1), \  
    M_{\rm{d}}(x) = |x_1^2 + x_2^2 - 4|.}\nonumber 
\end{eqnarray}

Figure \ref{fig:trajectories} illustrates the resulting optimization trajectories (orange lines). For each test, the algorithm initializes at a starting point (white $+$), projects onto the boundary of the set $\X$ (red rectangle) when necessary, and successfully navigates toward the feasible region (green areas/lines indicating $M(x) = 0$). Despite the significant variance introduced by the stochastic gradient oracle, the last iterate (white $\times$) reliably converges to a local minimum within the feasible set. These baseline tests confirm that as long as the constant penalty parameter is chosen to be sufficiently large relative to the objective, the algorithm effectively isolates the true constrained solution.

\begin{figure}[ht]
    \centering
    \subfloat[Bivariate Quadratic]{\includegraphics[width=0.48\linewidth]{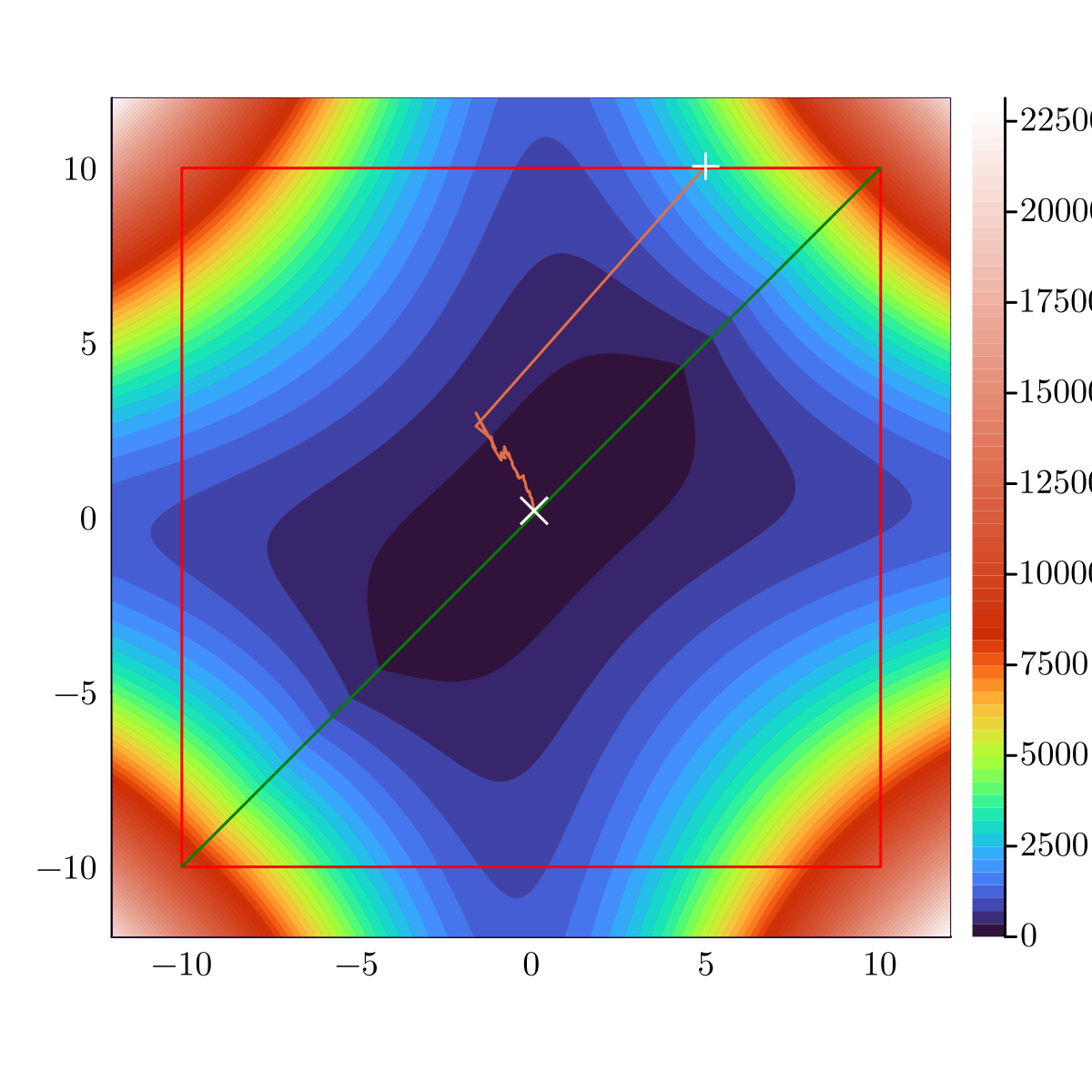}\label{fig7:a}} \hfill
    \subfloat[Goldstein-Price]{\includegraphics[width=0.48\linewidth]{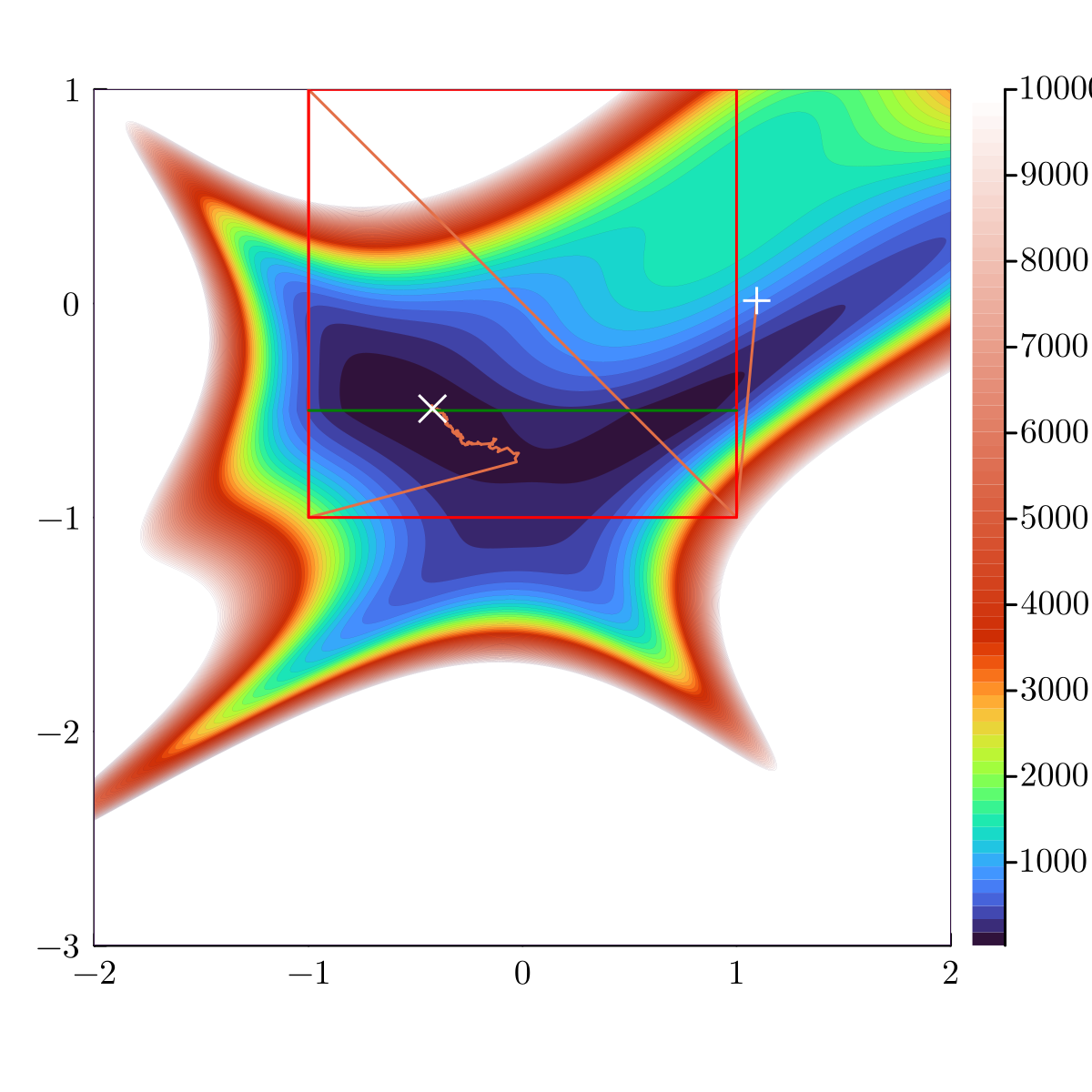}\label{fig7:b}} \\
    \subfloat[Bukin No. 6]{\includegraphics[width=0.48\linewidth]{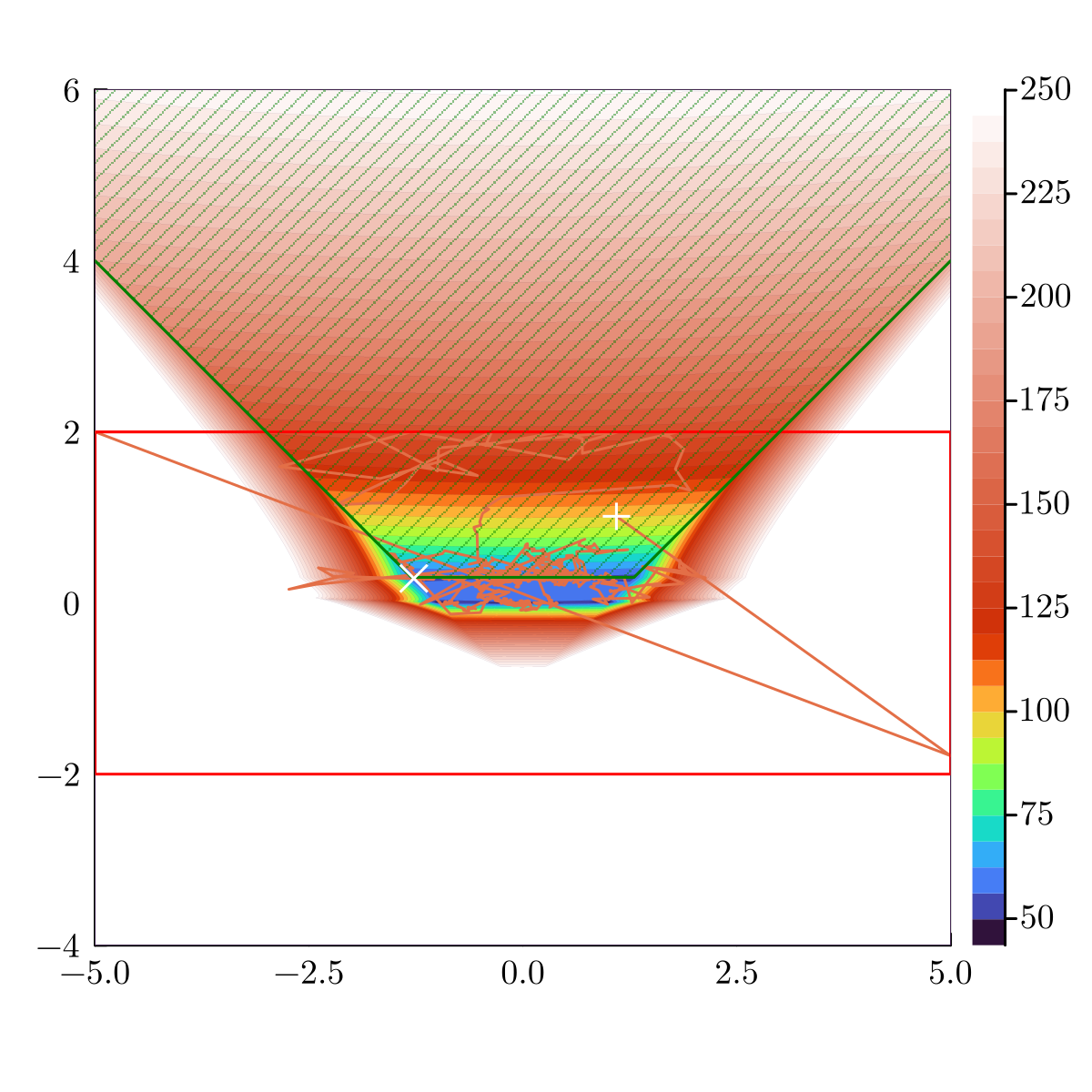}\label{fig7:c}} \hfill
    \subfloat[Beale]{\includegraphics[width=0.48\linewidth]{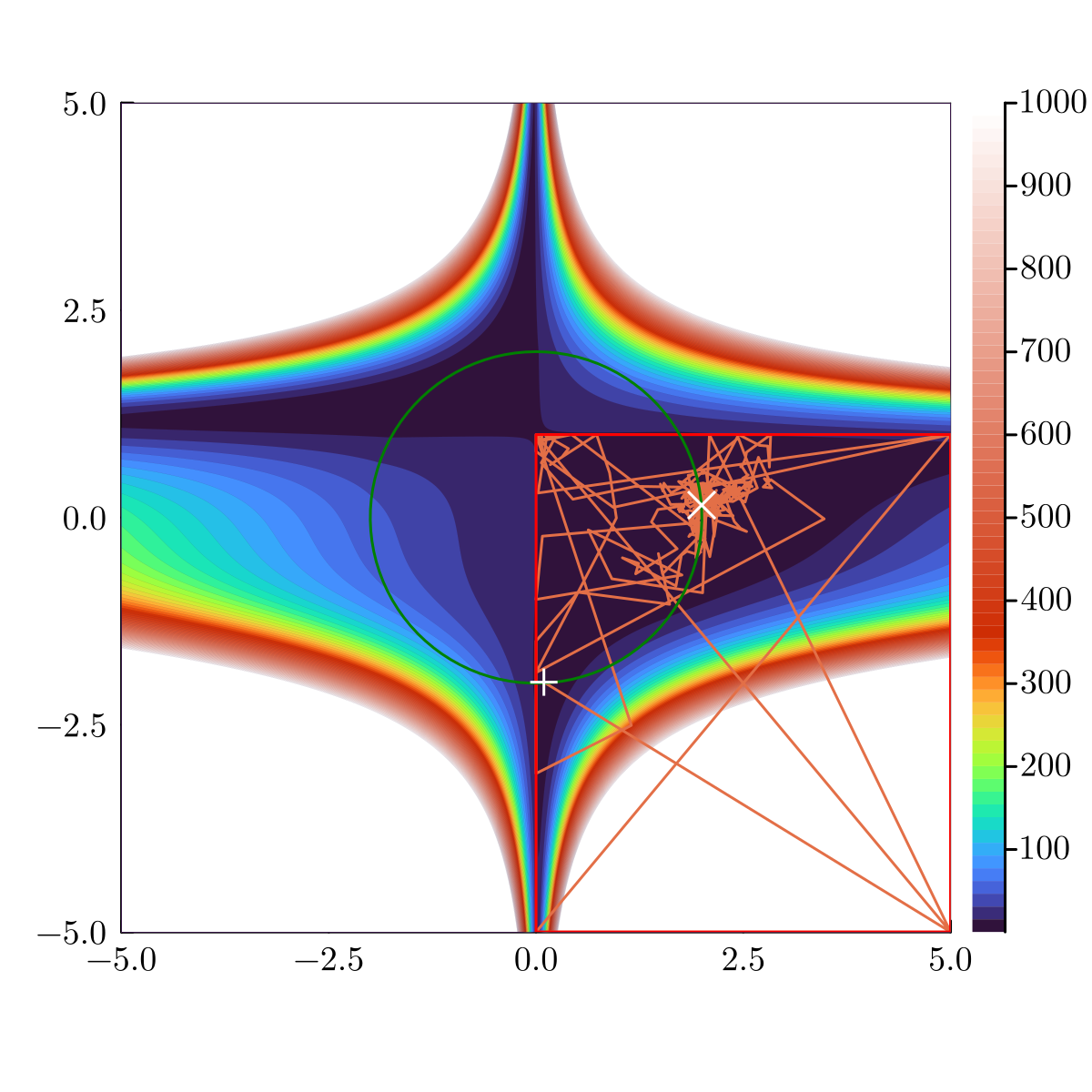}\label{fig7:d}}
    \caption{Optimization trajectories for 2D benchmarks using a constant exact penalty parameter. The algorithm effectively converges to the feasible sets (green) while remaining bounded by $\X$ (red).}
    \label{fig:trajectories}
\end{figure}

\subsection{Rosenbrock's function}

The Rosenbrock's function defined as:
\begin{equation}
    f_{\rm{Rosenbrock}}(x) =
    \sum_{i=1}^{n-1} \left[
        100(x_{i+1} - x_i^2)^2 + (1 - x_i)^2
    \right]
\end{equation}
where $x \in \R^n$, $n \in N$, is a common benchmark function for various optimization problems. The function can be optimized stochastically, by randomly selecting a subset of terms in the sum. In our case, we select only one sample per call to the oracle. We introduce the additional penalty function:
\begin{equation}
    p_{\rm{Rosenbrock}}(x) = \abs{x^T x - n}        
\end{equation}
and project each iterate onto the set $\X_{\rm{Rosenbrock}} = \overline{\B(0, 2\sqrt{n})}$. In this example we also use dual-averaging to speed up convergence.

Fig.~\ref{fig:rosenbrock} shows convergence of the algorithm for the Rosenbrock's function. The convergence is slow, but the algorithm converges after many iterations. Importantly, each call to the stochastic oracle gives only new information for two coordinates. Hence for large $n$, the convergence is very slow due to the nature of the oracle.

\begin{figure}[ht]
    \centering
    \subfloat[$n=4$]{
        \includegraphics[width=0.45\linewidth]{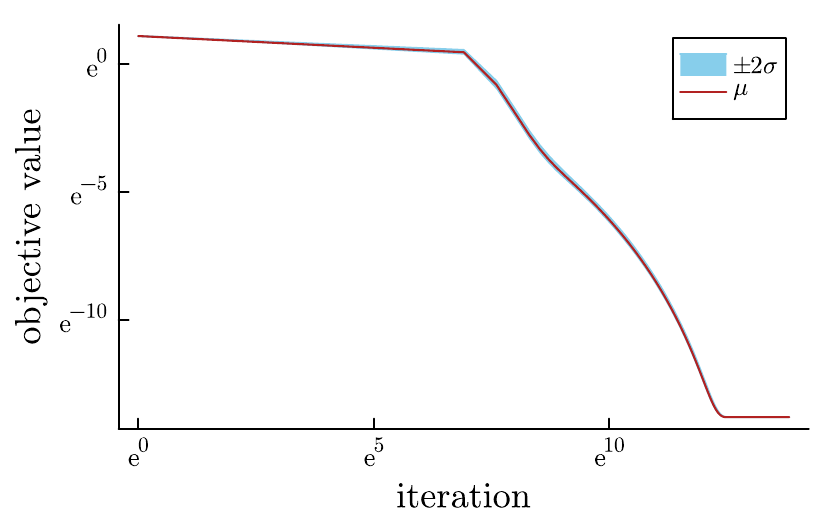}
        \label{fig8:a} 
    } 
    \subfloat[$n=8$]{
        \includegraphics[width=0.45\linewidth]{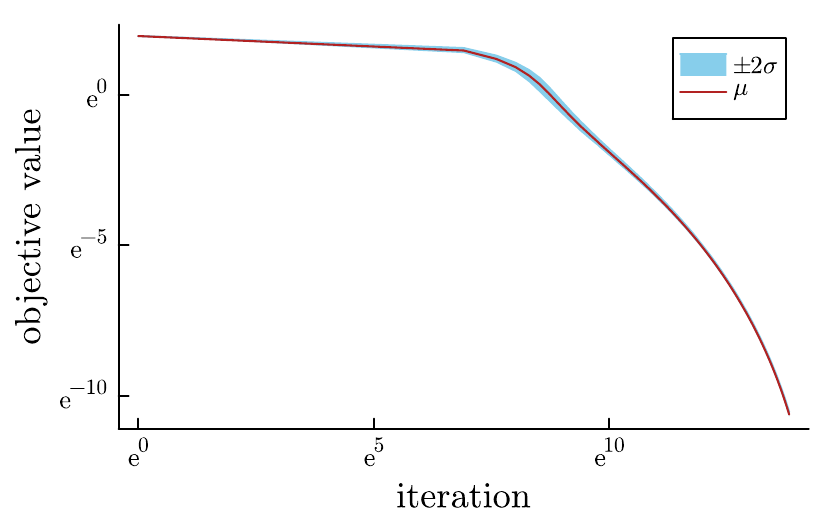}
        \label{fig8:b} 
    }
    \vfill
    \subfloat[$n=16$]{
        \includegraphics[width=0.45\linewidth]{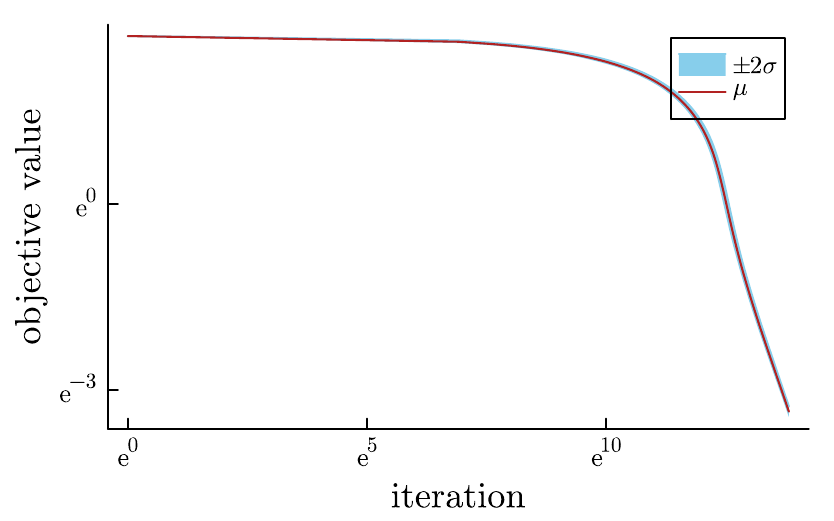}
        \label{fig8:c} 
    }
    \subfloat[$n=32$]{
        \includegraphics[width=0.45\linewidth]{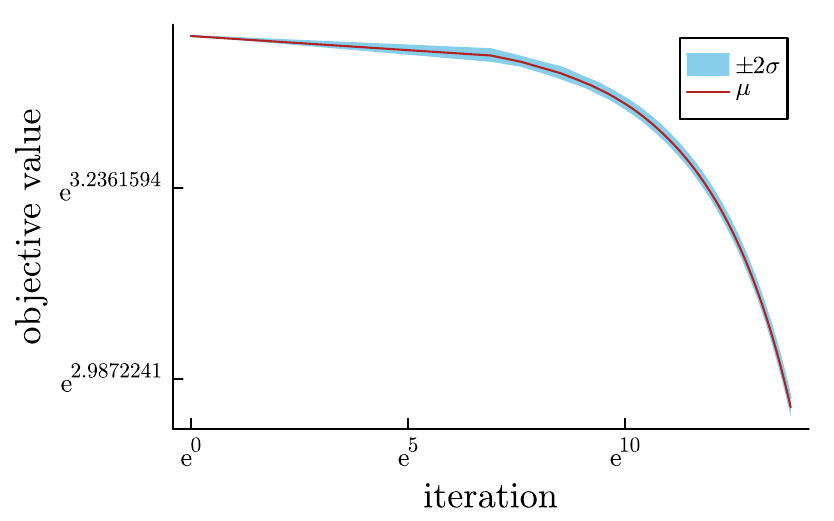}
        \label{fig8:d} 
    }
    \caption{
        Convergence example for Rosenbrock's function
        $f(x) = \sum_{i=1}^{n-1} [100(x_{i+1} - x_i^2)^2 + (1 - x_i)^2]$,
        for $x \in \R^n$ and $n \in \{4, 8, 16, 32\}$. Both axes are scaled logarithmically. The mirror function is an identity function. The stochastic gradient oracle takes the gradient of one random index at each step.}
    \label{fig:rosenbrock}
\end{figure}

\subsection{Adaptive Penalty Update Strategy (1D)}

To illustrate the mechanics of the adaptive update rule ({\it Algorithm 1}), Figure \ref{fig:penalty_update} visualizes the optimization of a 1D quadratic objective subject to an active inequality constraint, where the unconstrained minimum lies strictly outside the feasible set. The optimization process is characterized by two distinct phases. 

Initially, the penalty parameter $p_k$ is too small, and the iterates are drawn toward the infeasible unconstrained minimum. By evaluating the test function $t_{p_k}(X_k)$ outside the feasible set, the algorithm detects that the descent is insufficient and systematically increases $p_k$. The top plot of Figure \ref{fig:penalty_update} demonstrates how this adaptivity dynamically reshapes the exact penalty function $P_{p_k}(x) = f(x) + p_k M(x)$ (color-coded transitioning from purple to yellow) until the minimum is forced toward the feasible boundary.

Once $p_k$ crosses the critical threshold $\bar{p}$, the second phase begins. The algorithm converges to the sharp exact minimum at the true constrained solution. This sharp non-smoothness causes the subgradient norm to oscillate persistently (middle plot). This oscillation is a critical feature, as it safely prevents the test function from triggering any further, unnecessary increases to the penalty parameter, confirming the stabilization guaranteed by {\it Lemma \ref{boundedPenalty}}.

\begin{figure}[ht]
    \centering
    \includegraphics[width=0.7\linewidth]{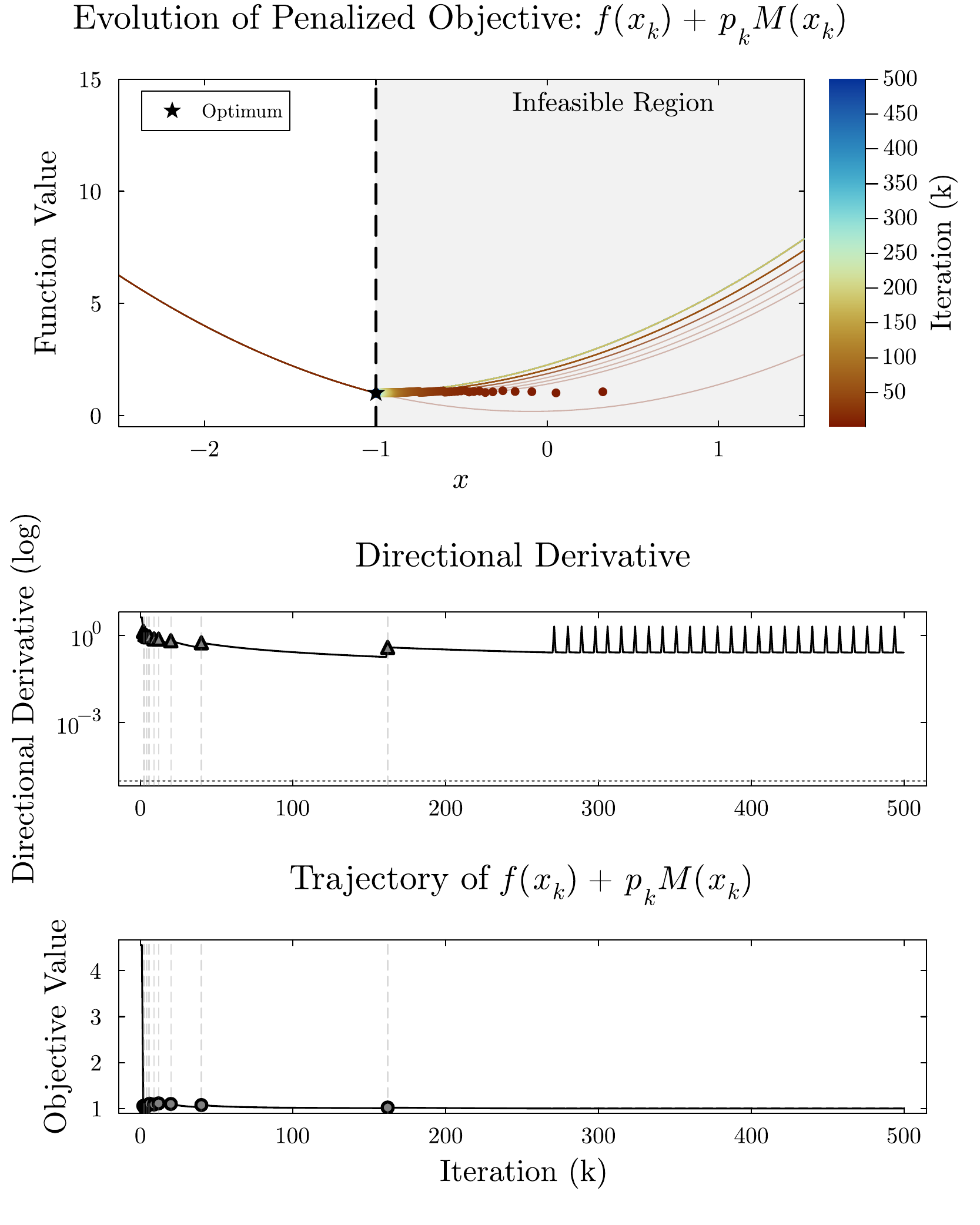}
    \caption{Evolution of the 1D adaptive penalty method for $f(x) \coloneq x^2$ and $M(x) \coloneq \max(0, x + 1)$. \textbf{Top:} The penalty function $P_{p_k}(x)$ reshaping as $p_k$ increases (purple to yellow). \textbf{Middle:} (Negative) directional derivative against iterations, oscillating at the non-smooth minimum. \textbf{Bottom:} Convergence of the total penalty function value.}
    \label{fig:penalty_update}
\end{figure}

\subsection{Adaptive Penalty Update Over a Compact Set (1D)}

To empirically validate the necessity of the primal reduced gradient (pseudo-gradient) introduced in Section 4.2, we evaluate a 1D optimization scenario constrained by both a general inequality constraint (e.g., $x \le -1$) and a simple compact set boundary requiring explicit projection (e.g., $x \ge -0.5$).

Recall the theoretical vulnerability of the standard subgradient test: if the trajectory stalls against the boundary of the simple set $\X$, the descent direction is absorbed by the normal cone. Figure \ref{fig:pseudo_gradient_1d} visualizes this exact pathology and demonstrates how the primal reduced gradient resolves it.

\begin{figure}[ht]
    \centering
    \includegraphics[width=0.61\linewidth]{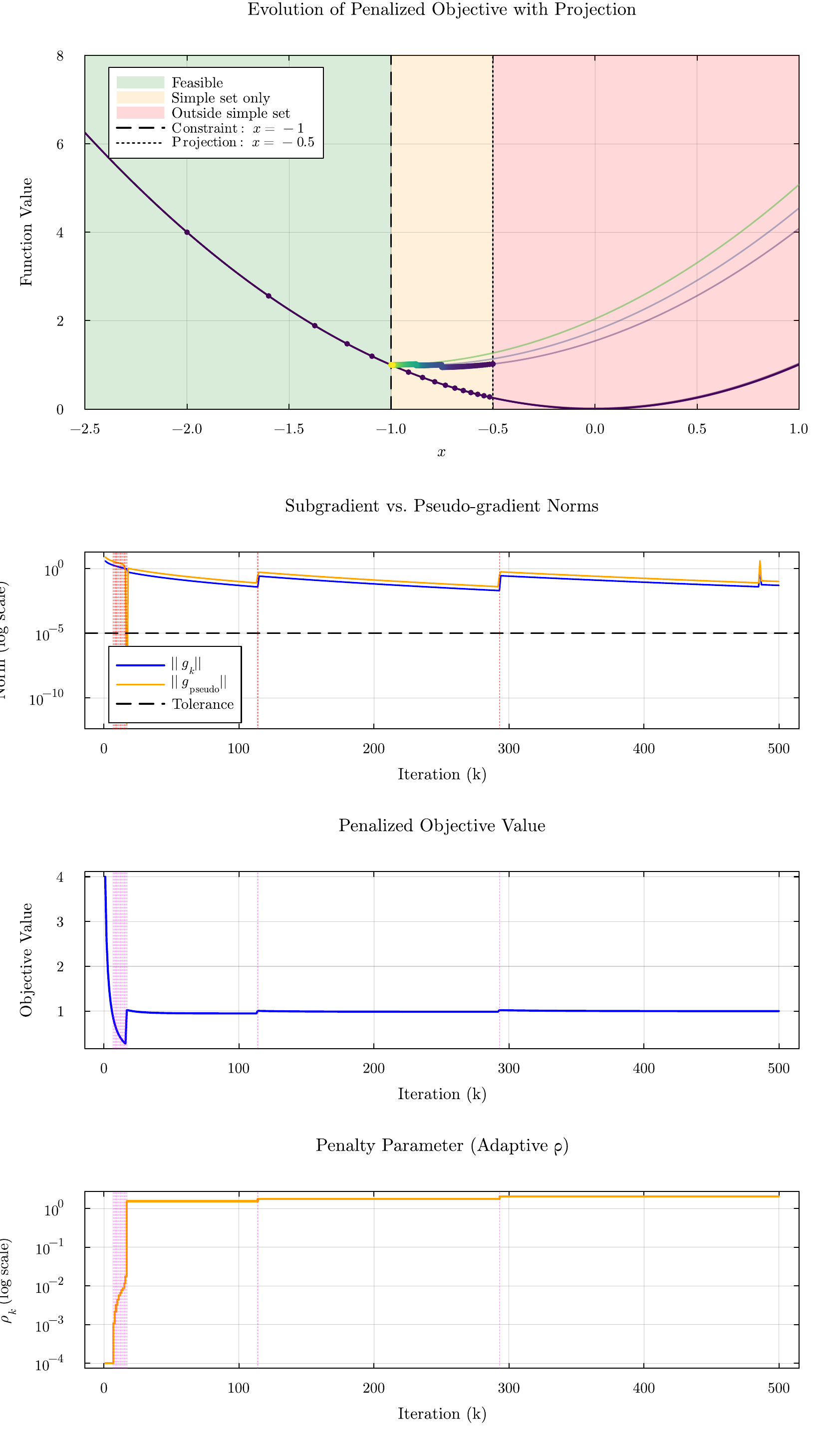}
    \caption{Evolution of the adaptive penalty method over a simple set boundary. \textbf{Top:} The penalized objective overlaid with the simple set boundaries and constraints. \textbf{Second:} Comparison of the analytical subgradient norm ($\|g_k\|$) versus the pseudo-gradient norm ($\|g_{pseudo}\|$). \textbf{Third:} Convergence of the penalized objective value. \textbf{Bottom:} The step-wise inflation of the adaptive penalty parameter $p_k$.}
    \label{fig:pseudo_gradient_1d}
\end{figure}

The optimization process illustrated in Figure \ref{fig:pseudo_gradient_1d} highlights the critical behavioral divergence between the analytical subgradient and the primal reduced gradient:

\begin{itemize}
    \item \textbf{The Boundary Stall:} Initially, an insufficient penalty parameter draws the iterates toward an infeasible unconstrained minimum. As the trajectory attempts to exit the simple set $\X$ to reach this minimum, the mirror map projects the iterate onto the boundary (e.g., at $x = -1.0$). 
    \item \textbf{Subgradient Failure vs. Reduced Gradient Detection:} As shown in the second panel, when the sequence stalls at the boundary, the norm of the analytical subgradient $\norm{G_k}$ remains strictly positive because the descent direction continues to point outward. If the algorithm relied on the standard subgradient test, it would fail to recognize the stall and subsequently fail to update $p_k$. In contrast, the primal reduced gradient $\norm{R_k}$ measures the effective displacement after projection. Because the projection restricts the iterate to the boundary, this displacement evaluates to zero, causing $\norm{R_k}$ to fall below the tolerance threshold.
    \item \textbf{Successful Adaptive Update:} Because the reduced gradient correctly identifies the stall ($\norm{R_k}^2 < M_\beta(X_k) / p$), the algorithm triggers the penalty update condition. As illustrated in the bottom panel, the penalty parameter $p_k$ is systematically increased. This dynamically reshapes the penalized objective landscape (top panel) until the required descent direction aligns with the feasible region, allowing the algorithm to resume convergence.
\end{itemize}

This experiment confirms that replacing the subgradient with the primal reduced gradient is not merely a theoretical convenience, but an algorithmic necessity to ensure convergence when exact penalty methods are subjected to compact set projections.

\subsection{The Necessity of Differentiability Outside the Feasible Set}

The adaptive penalty update mechanism relies on the gradient vanishing when the sequence converges to an infeasible unconstrained minimizer. Figure \ref{fig:significance_of_differentiability} illustrates why standard non-smooth penalties (e.g., $\ell_1$) fail to satisfy this condition, whereas the proposed $\beta$-norm formulation succeeds.

Consider the minimization of $f(x) = x_1 + x_2$ subject to $x_1 \leq 0$ and $x_2 - x_1 \leq 0$. A standard non-smooth penalty introduces points of non-differentiability strictly outside the feasible set (Figure \ref{fig:significance_of_differentiability}, right). At these points, the norm of the subgradient remains bounded away from zero (orange line). Consequently, the penalty update condition is never satisfied, $p_k$ remains constant, and the method fails to reach the feasible set. In contrast, the $\beta$-norm formulation guarantees continuous differentiability at all strictly infeasible points (Figure \ref{fig:significance_of_differentiability}, left). The gradient vanishes at the infeasible minimizer (blue line), which successfully triggers the update condition to increase $p_k$ and directs the sequence toward the optimal solution of the constrained problem.

\begin{figure}[ht]
    \centering
    \includegraphics[width=0.99\linewidth]{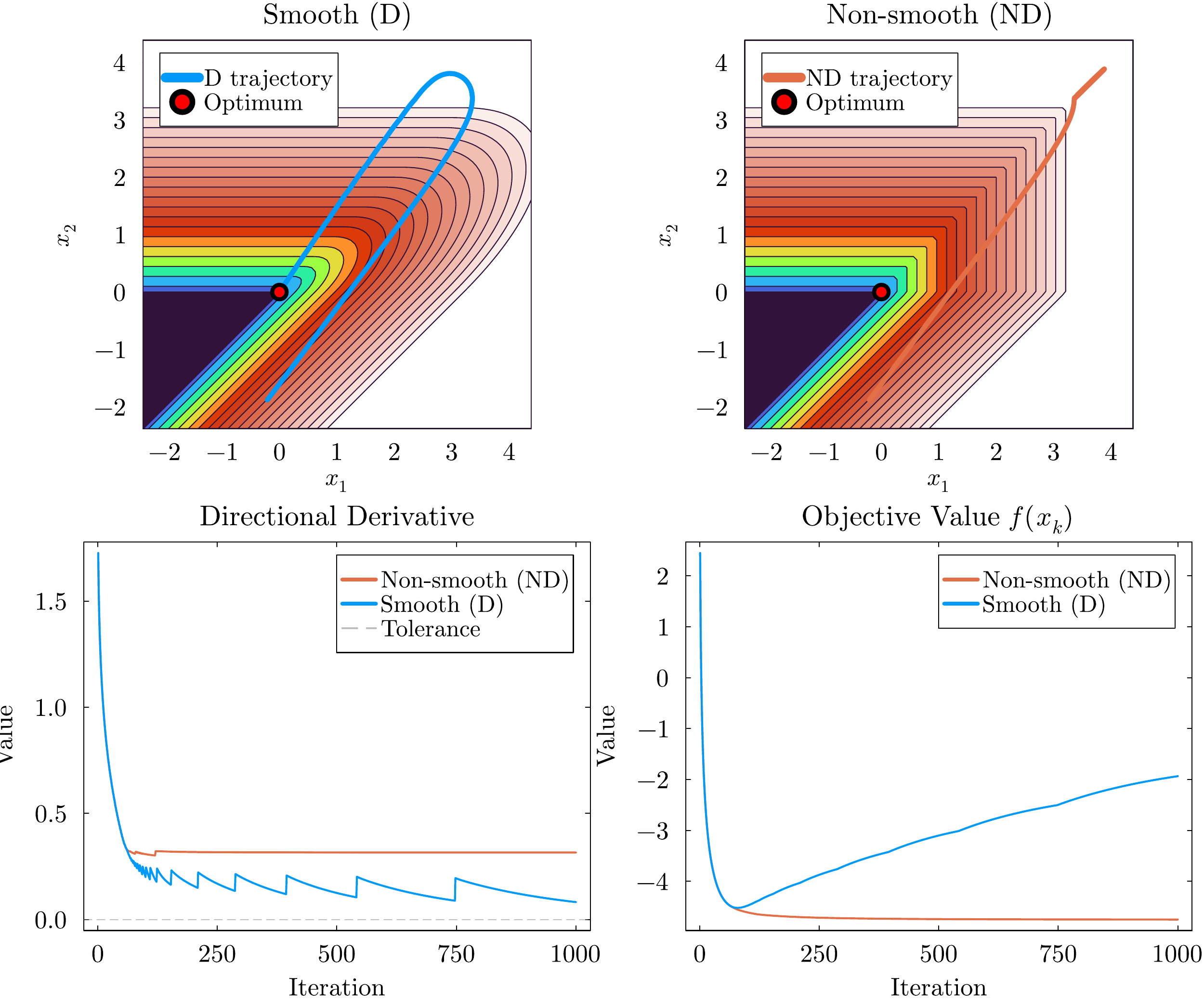}
    \caption{Comparison of the adaptive algorithm using a continuously differentiable penalty outside the feasible set ($\beta$-norm, blue lines) versus a standard non-smooth penalty ($\ell_1$, orange lines). The $\beta$-norm formulation ensures the gradient vanishes at infeasible stationary points, guaranteeing the execution of the required penalty updates.} 
    \label{fig:significance_of_differentiability}
\end{figure}

\subsection{Binary Regression}

To evaluate the algorithm on a problem requiring simultaneous continuous optimization and discrete feature selection, we apply it to a relaxed binary regression task. The goal is to recover a sparse, strictly discrete weight vector $w^* \in \mathbb{R}^p$, where the active features are drawn with a probability of 0.3, $w^*_i \sim \text{Bernoulli}(0.3)$. 

The core optimization problem is formulated as a least-squares regression:
\begin{equation}
    \min_{w \in \mathbb{R}^p} \quad f(w) \coloneq \|X_{\text{train}} w - y_{\text{train}}\|_2^2 
\end{equation}
where $X_{\text{train}}$ contains standard Gaussian features, and the continuous target variable is generated as $y = X w^* + \varepsilon$ with Gaussian noise $\varepsilon \sim \mathcal{N}(0, 0.01 I)$.

\subsubsection*{Exact Penalty Formulation and Optimization}

To force the continuous weights toward the desired discrete states without relying on hard boundary constraints, we employ an exact penalty scheme. We define a set of equality constraints that are satisfied only when the weights reach the discrete boundary:
\begin{equation}
    h_i(w) = w_i^2 - 1 = 0, \quad \forall i = 1, \dots, n.
\end{equation}
The penalty function is then constructed using the Euclidean norm of this constraint vector:
\begin{equation}
    M(w) = \norm{h(w)}_2 = \sqrt{\sum_{i=1}^n (w_i^2 - 1)^2}.
\end{equation}
This formulation smoothly penalizes any deviation from the valid discrete states. It creates an unconstrained exact penalty objective function:
\begin{equation}
    P_p(w) = f(w) + p M(w).
\end{equation}
Because the exact penalty natively enforces the structural bounds, we omit the projection operator entirely.

The step size follows a diminishing schedule $\gamma_k = 0.1 / k$. The penalty parameter $p$ is adaptively scaled by a factor of $\kappa = 1.1$ from an initial relaxed state of $p_0 = 10^{-3}$. As $p$ increases, the penalty term dominates, dynamically increasing the pressure on the continuous iterates to settle exactly into the discrete states.

\subsubsection*{Results and Evaluation}

As Table \ref{tab:binary_regression_results} shows, the algorithm achieves a perfect 100\% support recovery and low test MSE across all dataset sizes, finding the global minima of the problem every time. Crucially, the adaptive strategy automatically scaled the penalty parameter during execution, eliminating the need for manual tuning. This dynamic adjustment consistently yielded feasible, optimal solutions regardless of dataset scale, with final penalty values reported alongside the metrics.

\begin{table}[ht]
\centering
\caption{Binary regression performance. Columns: Number of observations, features, time, achieved objective, test MSE, final penalty parameter (P.P.), and support recovery.}
\label{tab:binary_regression_results}
\begin{tabular}{ccccccc}
\toprule
\textbf{N.Obs.} & \textbf{N.Fea.} & \textbf{Time (s)} & \textbf{Final Obj.} & \textbf{Test MSE} & \textbf{P.P.} & \textbf{Sup. Rec.} \\
\midrule
80    & 20    & 0.01    & 0.0112    & 0.00636    & 0.368 & 100\% \\
80    & 50    & 0.08    & 0.00787   & 0.0139     & 1.16  & 100\% \\
160   & 20    & 0.04    & 0.0103    & 0.00983    & 0.304 & 100\% \\
400   & 50    & 0.06    & 0.00992   & 0.0116     & 0.335 & 100\% \\
400   & 100   & 0.09    & 0.00947   & 0.0087     & 0.539 & 100\% \\
640   & 50    & 0.07    & 0.0105    & 0.009      & 0.368 & 100\% \\
800   & 200   & 0.24    & 0.0103    & 0.0106     & 0.539 & 100\% \\
800   & 500   & 1.58    & 0.0103    & 0.0104     & 1.05  & 100\% \\
1200  & 200   & 0.26    & 0.0103    & 0.0113     & 0.446 & 100\% \\
\bottomrule
\end{tabular}
\end{table}

\section{Summary}
The paper presents an approach to optimization problems with constraints in which objective functions depend on large number of variables and on distributions which can be estimated by samples of large dimensions. Due to objective functions characteristics these problems can be solved by transforming them to problems with penalty functions and possible with constraints of the type $x\in \X$ (which we call simple cosntraints). Since exact penalty functions, under some constraint qualifications, require finite values of penalty parameters in order to guarantee that solutions to problems with penalty functions are solutions to initial constrained optimization problems, it is natural to look for efficient methods for minimizing exact penalty functions under simple constraints. The paper presents constraint qualification which guarantees that solutions to problems with exact penalty functions and simple constraints are solutions to initial constrained optimization problems. 

Stochastic mirror descent methods are suitable methods for problems with simple constraints, furthermore they have proven convergence for problems defined by functions which belong to the class of functions which includes convex functions and certain non--convex functions (they define variationally coherent problems). So it is reasonable to adopt stochastic mirror functions to solving optimization problems with exact penalty functions and simple constraints. Since exact penalty functions are not differentiable the paper shows how to generalize the notion of variational coherence to problems with the functions admitting Clarke's generalized derivative. The paper indicates that due to semicontinuity of the generalized derivative, the proof of convergence of the mirror descent algorithm presented in \cite{boyd} holds in the case of problems with exact penalty functions.  

Numerical results of the application of the proposed method to some elementary constrained optimization problems are presented to empirically confirm the papers theoretical results. More numerical results concerning optimization problems related to learning quantized deep learning networks, and obtained by the method, will be shown elsewhere.


\bibliographystyle{unsrt}  
\bibliography{bibliography}


\newpage
\appendix
\label{secAppendix}

\section{Proof of the Lemma~\ref{l2}}\label{appendix:proofl2}
\begin{proof} (of {\it Lemma \ref{l2}}) Take any $x\in {\cal X}$.
	Let $r>0$ be a number such that
	\begin{eqnarray}
	&{\displaystyle
		M(x) < r}\nonumber
	\end{eqnarray}
	for all $x\in {\cal X}$. 
	We deduce from ${\bf (CQ)}$ that there is a simplex
	in $\cE(x)\subset
	\R^{n_E}$ with vertices $\{e_j\}_{j=0}^{n_E}$ which contains $0$
	as an interior point. By definition of $\cE(x)$, there exist
	$d_0,\ldots,d_{n_E}\in \D$ and $\delta > 0$ such that for $j=0,\ldots,n_E$
	\begin{eqnarray}
	&{\displaystyle 
		\left \{\left \langle \nabla h_i(x),
		d_j\right \rangle\right \}_{i\in E} =  e_j,\ \ 
		\max_{i\in I} \left \langle \nabla g_i(x),
		d_j \right \rangle  \leq  - \delta.}
	\nonumber 
	\end{eqnarray}
	
	Let $(\lambda_0,\lambda_1,\ldots,\lambda_{n_E})$ be the barycentric coordinates of $0$
	w.r.t. the vertices $e_j$ of the simplex, i.e.
	\begin{eqnarray}
	&{\displaystyle
		0\ \left (=\sum_{j=0}^{n_E} \lambda_j e_j\right ) = \nabla
		h (x)\circ \sum_{j=0}^{n_E} \lambda_j d_j .}\nonumber
	\end{eqnarray}
	Here
	\begin{equation*}
	\nabla h (x)\circ d := \left \{\left \langle \nabla h_i(x),d\right \rangle
	\right \}_{i\in
		E}.
	\end{equation*}
	We shall also write
	\begin{equation*}
	h (x) := \left \{ h_i(x)\right \}_{i\in E}.
	\end{equation*}
	
	Since the vertices are in general position and $0$ is an interior point, the
	$\lambda_i$'s are all positive and we may find $\delta_1 > 0$ such that
	\begin{eqnarray}
	&{\displaystyle
		\left (\lambda_0 - \sum_{j=1}^{n_E} \alpha_j,\lambda_1 + \alpha_1,\ldots,
		\lambda_{n_E} + \alpha_{n_E} \right )
		\in \left \{ \gamma\in \R^{n_E+1} \ \middle| \ \gamma_j \geq 0\ \forall
		j,\ \sum_{j=0}^{n_E}\gamma_j =1 \right \}} \nonumber
	\end{eqnarray}
	whenever $\alpha\in {\cal B} (0,\delta_1)\subset \R^{n_E}$. (${\cal B} (0,\delta_1)$
	is a ball with radius $\delta_1$.)
	Furthermore, the $n_E\times n_E$ matrix $P(x)$
	defined by
	\begin{eqnarray}
	&{\displaystyle
		P(x)\alpha := \sum_{j=1}^{n_E} \nabla h(x)\circ \alpha_j (d_j-d_0) }
	\label{b1}
	\end{eqnarray}
	is invertible for $x=\tilde{x}$, from the definition of $d_j,\ j= 1,\ldots,n_E$.
	
	In consequence of hypothesis ${\bf (CQ)}$ we may choose a neighborhood ${\cal N}(\tilde{x},\varepsilon)$ ($\varepsilon > 0$) of $\tilde{x}$ in $\X$ and
	numbers 	$\hat{r}\geq r$ and $\delta_2\in (0,\hat{r}^{-1}]$ such that for any
	$x\in \X$ satisfying $x\in {\cal N}(\tilde{x},\varepsilon)$
	\begin{eqnarray}
	(i) \hspace{5mm}& & \max_{i\in I}\left \langle \nabla g_i(x),v_j - x\right \rangle \leq
	- \delta/2\ \ \forall j   \nonumber\\
	(ii)\hspace{5mm} & & P(x)\ \ {\rm is\ invertible} \nonumber\\
	(iii)\hspace{5mm} & & \left \| P(x)^{-1} \nabla h(x)\circ
	\left (\left (\sum_{j=0}^{n_E} \lambda_j v_j\right )-x\right )
	\right \| \leq \delta_1/2\nonumber\\
	(iv)\hspace{5mm} & & \delta_2 \left \| P(x)^{-1} 
	\right \| n_E^{1/2} \leq \delta_1/2 .\nonumber
	\end{eqnarray}
	(In {\it (iv)} the norm is the Frobenius norm.
	Here the controls $v_j\in \X$, $j=0,\ldots,n_E$ are defined to be
	\begin{eqnarray}
	&{\displaystyle
		v_j:= x + d_j(x).}\nonumber
	\end{eqnarray}
	
	Now suppose that $x$ is not feasible.
	Set
	\begin{eqnarray}
	&{\displaystyle
		\alpha = P(x)^{-1} \left [ - \nabla h(x)\circ
		\left (\left (\sum_{j=0}^{n_E}\lambda_j v_j\right ) -x\right )
		- \delta_2 M(x)^{-1}
		h(x) \right ]. }
	\nonumber \\
	\label{b2}
	\end{eqnarray}
	Notice that, by properties {\it (iii)} and {\it (iv)},
	$\| \alpha \| \leq \delta_1$. Also set
	\begin{eqnarray}
	&{\displaystyle
		v = v_0 + \sum_{j=1}^{n_E} (\lambda_j + \alpha_j)(v_j-v_0).}\nonumber
	\end{eqnarray}
	Because $\| \alpha \| \leq \delta_1$ we have that $v\in \X$.
	  We now verify that $v$ has the required properties.                     
	Notice first that
	\begin{eqnarray}
	&{\displaystyle
		\left \| v - x \right \| \leq 2d,}\label{b3}
	\end{eqnarray}
	where $d$ is a bound on the norms of elements in $\X$.
	
	We have from (\ref{b1}) and (\ref{b2}) that
        \begin{align}
	P(x)\alpha & = \nabla h(x)\circ \sum_{j=1}^{n_E} 
	\alpha_j(v_j- v_0) 
	\nonumber \\
	& = - \nabla h(x)\circ \left (\sum_{j=1}^{n_E} \lambda_j \left (v_j-v_0\right ) 
	+ v_0 -
	x\right ) - \delta_2 M(x)^{-1}
	h(x).\nonumber
        \end{align}
	By definition of $v$,
	\begin{eqnarray}
	&{\displaystyle
		\nabla h(x)\circ (v-x) = -\delta_2 M(x)^{-1}h(x).}
	\end{eqnarray}
    But then
	\begin{eqnarray}
	&{\displaystyle
		\nabla h(x)\circ (v - x) = -(\delta_2/
		\hat{r})g(x).}
	\nonumber
	\end{eqnarray}
	Since $\delta_2/\hat{r}\leq 1$, it follows that
	\begin{eqnarray}
	&{\displaystyle
		\max_{i\in E} \left | h_i(x) + \left \langle \nabla h_i(x),
		v-x\right \rangle \right |
		- \max_{i\in E} \left | h_i(x)(x)\right |
		\leq - (\delta_2/\hat{r})M(x).}
	\end{eqnarray}
		
	We deduce from property {\it (i)} that
	\begin{eqnarray}
	&{\displaystyle
		\left \langle \nabla g_j(x),v_0 + \sum_{i=1}^{n_E}\left (\lambda_i +
		\alpha_i\right )\left (v_i-v_0\right )-x\right \rangle 
		\leq - \delta/2,\ \forall j\in I.}
	\nonumber
	\end{eqnarray}
	It follows that
	\begin{eqnarray}
	&{\displaystyle
		\left \langle \nabla g_j(x),v-x\right \rangle \leq - \delta/2,\ \forall j\in I.}\label{b5}
	\end{eqnarray}
%

Surveying inequalities (\ref{b3})--(\ref{b5}),
	we see that $v$ satisfies all relevant
	conditions for completion of the proof, 
	when we set $K_1 = \delta_2$, $K_2 =\delta/2$.


\end{proof}


\section{Derivation of the Proximal Variational Inequality}
\label{app:variational_inequality}

In Lemma \ref{lem:boundedPenalty_MD}, the finiteness proof relies on the variational inequality characterizing the trial step $X^+$. For completeness, we provide the derivation of this inequality directly from the properties of the mirror map and the convex conjugate.

Recall that during the penalty update loop, the trial step $X^+$ is evaluated from the reset dual state $Y = \der{h}{X_k}{}$. The updated dual variable is $Y^+ = \der{h}{X_k}{} - \gamma_k G_k$, and the trial primal point is generated via the mirror map as $X^+ = \Mirror_h(Y^+)$. 

By the definition of the Fenchel coupling framework, the mirror map $\Mirror_h(y) = \der{h^*}{y}{}$ yields the unique maximizer of the function defining the convex conjugate $h^*(y)$. Therefore, the trial point $X^+$ is the exact solution to the following maximization problem over the compact convex set $\X$:
\begin{equation}
    X^+ = \arg\max_{z \in \X} \left\{ \mul{Y^+}{z} - h(z) \right\}.
\end{equation}

Let the concave objective function of this subproblem be denoted as $\Psi(z) = \mul{Y^+}{z} - h(z)$. Because $X^+$ maximizes $\Psi(z)$ over the closed convex set $\X$, the first-order necessary and sufficient optimality condition dictates that the gradient of $\Psi$ evaluated at $X^+$ must satisfy the following variational inequality for all feasible directions $z \in \X$:
\begin{equation} \label{eq:first_order_opt}
    \mul{\der{\Psi}{X^+}{}}{z - X^+} \le 0.
\end{equation}

Taking the gradient of $\Psi(z)$ with respect to $z$ yields:
\begin{equation}
    \der{\Psi}{z}{} = Y^+ - \der{h}{z}{}.
\end{equation}
Evaluating this gradient at the maximizer $X^+$ and substituting it into the optimality condition (\ref{eq:first_order_opt}) gives:
\begin{equation}
    \mul{Y^+ - \der{h}{X^+}{}}{z - X^+} \le 0, \quad \forall z \in \X.
\end{equation}

Finally, we substitute the definition of the updated dual variable $Y^+ = \der{h}{X_k}{} - \gamma_k G_k$ into the inequality:
\begin{equation}
    \mul{\der{h}{X_k}{} - \gamma_k G_k - \der{h}{X^+}{}}{z - X^+} \le 0.
\end{equation}
Multiplying the entire inequality by $-1$ reverses the inequality sign and yields the required formulation used in Lemma \ref{lem:boundedPenalty_MD}:
\begin{equation}
    \mul{\gamma_k G_k + \der{h}{X^+}{} - \der{h}{X_k}{}}{z - X^+} \ge 0, \quad \forall z \in \X.
\end{equation}

\end{document}